
\documentclass{amsart}
\usepackage[dvipsnames]{xcolor} 
\usepackage[urlcolor=pink, 
linkcolor=blue,
citecolor=Emerald,
colorlinks=true,
linktocpage=true]{hyperref}
\usepackage{tikz}
\usepackage{graphicx,amssymb,eucal,mathrsfs}
\usepackage{latexsym,amsmath,epsfig,epic,eepic}
\usepackage{amscd}
\usepackage{amsthm}
\usepackage{indentfirst}
\usepackage{epsf,pgf}
\usepackage{graphicx}
\usepackage{pstricks,mathrsfs}

\title[On certain $C^0$-aspects of contactomorphism groups]{On certain $C^0$-aspects of contactomorphism groups}

\author[Baptiste Serraille]{Baptiste Serraille}
\address{Department of Mathematics, ETH-Z\"{u}rich, R\"{a}mistrasse 101, 8092 Z\"{u}rich, Switzerland.}
\email{baptiste.serraille@math.ethz.ch}

\author[Vuka\v sin Stojisavljevi\'c ]{Vuka\v sin Stojisavljevi\'c}
\address{D\'e\-par\-te\-ment de math\'ematiques et de
sta\-tistique, Univer\-sit\'e de Mont\-r\'eal,  CP 6128 succ
Centre-Ville, Mont\-r\'eal,  QC  H3C 3J7, Canada.}
\email{vukasin.stojisavljevic@gmail.com}



\theoremstyle{theorem}
\newtheorem{thm}{Theorem}
\newtheorem{lemma}[thm]{Lemma}
\newtheorem{coro}[thm]{Corollary}
\newtheorem{prop}[thm]{Proposition}
\newtheorem{conjecture}[thm]{Conjecture}
\newtheorem{question}[thm]{Question}
\newtheorem{ex}[thm]{Example}
\newtheorem{dfn}[thm]{Definition}
\newtheorem{rem}[thm]{Remark}

\numberwithin{thm}{section} 

\newcommand{\Ham}{\text{Ham}}
\newcommand{\HamH}{\overline{\text{Ham}}_{c}}
\newcommand{\Homeo}{\text{Homeo}}
\newcommand{\Contoc}{\text{Cont}_{0,c}}
\newcommand{\ContocH}{\overline{\text{Cont}}_{0,c}}
\newcommand{\ContocU}{\widetilde{\text{Cont}}_{0,c}}

\newcommand{\conj}{\text{Conj}}
\newcommand{\conjH}{\overline{\text{Conj}}}

\newcommand{\diam}{\text{diam}}

\newcommand{\supp}{\text{supp}}

\newcommand{\R}{\mathbb{R}}

\newcommand{\Z}{\mathbb{Z}}

\begin{document}

\begin{abstract}
We study a number of questions related to the $C^0$-topology of contactomorphisms and contact homeomorphisms. In particular, we show a connection between Rokhlin property of contact homeomorphisms and contact non-squeezing, we define a new conjugation-invariant norm on contactomorphisms and explore its relation to the contact fragmentation norm and we introduce a measure of the size of conjugacy classes which is related to weak conjugacy equivalence. We also show that Sandon's spectral norm is $C^0$-locally bounded and extend its definition to contact homeomorphisms.
\end{abstract}

\maketitle	
\tableofcontents

\section{Introduction}

The present paper concerns aspects of $C^0$-contact topology reflected in contactomorphism groups and their $C^0$-closures. The field of $C^0$-contact topology studies the behaviour of smooth objects, such as contactomorphisms or Legendrian submanifolds, under $C^0$-limits.  It is analogous to the much more studied domain of $C^0$-symplectic topology. The starting point of both theories is the celebrated Eliashberg-Gromov theorem and its contact analogue, see \cite{Gro86,Eli87}. It states that a diffeomorphism obtained as a $C^0$-limit of symplectomorphisms is itself a symplectomorphism and similarly for a $C^0$-limit of contactomorphisms, see \cite{MS14,Mul19} for a detailed treatment of the contact case.  Motivated by these results, one defines a \textit{contact homeomorphism} as a homeomorphisms which can be obtained as a $C^0$-limit of smooth contactomorphisms. Recently, significant progress has been made in understanding the action of contact homeomorphisms on Legendrian submanifolds, see \cite{RZ20,Ush21, Naka20,DRS24,DRS24B, Sto22,DRS22,Sto24}. Despite these advances, the understanding of contact homeomorphisms still remains in the early stages.  We focus on studying $C^0$-properties of conjugacy classes of  contactomorphisms and contact homeomorphisms from quantitative perspective.

\subsection{Rokhlin property}\label{Section:Intro_Rokhlin}

Given a cooriented\footnote{We assume all contact manifolds to be cooriented, unless specified otherwise.} contact manifold $(Y,\xi=\ker \alpha)$, we denote by $\Contoc(Y)$ the identity component of the group of compactly supported contactomorphisms of $(Y,\xi).$ This is a subgroup of the group of compactly supported homeomorphisms of $Y$, denoted by $\Homeo_c(Y).$ 

We fix a Riemannian metric on $Y$ and denote by $d$ the induced distance on $Y.$ The $C^0$-distance on $\Homeo_c(Y)$ is defined as
$$d_{C^0}(\phi, \psi)=\max_{x\in Y} d(\phi(x),\psi(x)). $$
The corresponding $C^0$-norm will be denoted by $\| \cdot \|_{C^0}.$ Lastly, we denote by $\tau_{C^0}$ the topology on $\Homeo_c(Y)$ induced by $d_{C^0}.$ This topology renders $\Homeo_c(Y)$ into a topological group, see Lemma \ref{Lemma:Top_Group}. Denote by $\ContocH(Y)$ the closure of $\Contoc(Y)$ inside $(\Homeo_c(Y), \tau_{C^0}).$ Since $(\Homeo_c(Y), \tau_{C^0})$ is a topological group, $(\ContocH(Y), \tau_{C^0})$ is a topological group as well.

\begin{rem}
If $Y$ is compact, $\tau_{C^0}$ coincides with the compact-open topology as well as with the strong $C^0$-topology, both of which are canonically defined, without referring to an auxiliary metric on $Y.$ In the non-compact case, which we are mainly focused on, all of these topologies are different and comparable. Moreover, $\tau_{C^0}$ itself depends on the choice of a metric on $Y.$ A discussion of different topologies on $\Homeo_c(Y)$ can be found in Subsection \ref{Section_Maps_Topologies}. 
\end{rem}

Let $(x_1,\ldots ,x_n,y_1, \ldots,y_n,z)$ be coordinates on $\R^{2n+1}.$ We equip  $\R^{2n+1}$ and $\R^{2n}\times S^1= \R^{2n} \times \R / \Z $ with the standard contact structure given by
$$\xi_0 = \ker \alpha_0, ~ \alpha_0 = dz - \sum_{i=1}^n y_i dx_i.$$
When referring to the $C^0$-norm, $C^0$-distance or $\tau_{C^0}$, we assume that $\R^{2n+1}$ is equipped with the standard Euclidean metric and $\R^{2n}\times S^1$ with the induced metric on the quotient $\R^{2n}\times S^1=\R^{2n}\times \R / \Z .$

\begin{dfn}
Let $G$ be a topological group. We say that $G$ has Rokhlin property if there exists $g\in G$ such that $\conj(g):= \{ hgh^{-1}~|~h \in G \}$ is dense in $G$.
\end{dfn}

The definition of Rokhlin property of topological groups was first introduced in \cite{GW01,GW08}. We refer the reader to these papers for context and examples.  In symplectic topology,  Rokhlin property of Hamiltonian homeomorphisms of surfaces was studied in \cite{Sey13,LRSV21}, following a question of B\'{e}guin, Crovisier and Le Roux, see also \cite{GG04,EPP12}.  Our first goal is to explore footprints of the Rokhlin property in $C^0$-contact topology.  The following theorem illustrates a connection between the Rokhlin property of $\ContocH$ and the dichotomy between contact squeezing and non-squeezing.

\begin{thm}\label{thm: Rokhlin property of the contactomorphisms of the ball}
$(\ContocH(\R^{2n+1}), \tau_{C^0})$ has Rokhlin property. On the other hand, $(\ContocH(\R^{2n}\times S^1), \tau_{C^0})$ does not have Rokhlin property.
\end{thm}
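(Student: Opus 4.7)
The two parts call for opposite arguments: a construction powered by contact squeezing for the positive half, an obstruction drawn from contact non-squeezing for the negative one.

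\medskip

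\textit{The positive half.} On $\R^{2n+1}$ the contact scalings $\rho_\lambda(x,y,z)=(\lambda x,\lambda y,\lambda^2 z)$ and their compactly supported cutoffs allow one to conjugate any $\phi\in\Contoc(\R^{2n+1})$ into an arbitrarily small ball, with arbitrarily small $C^0$-norm. I would fix a countable $C^0$-dense sequence $\{\phi_n\}$ in $\ContocH(\R^{2n+1})$, pairwise disjoint closed balls $B_n$ with radii $r_n\searrow 0$ clustering at one interior point of some fixed ball, and elements $s_n\in\Contoc(\R^{2n+1})$ so that $\psi_n:=s_n\phi_ns_n^{-1}$ is supported in $B_n$ with $\|\psi_n\|_{C^0}<2^{-n}$. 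Since the supports are pairwise disjoint and the norms decay, the partial products $\psi_N\circ\cdots\circ\psi_1$ form a $C^0$-Cauchy sequence whose limit $g$ lies in $\ContocH(\R^{2n+1})$.

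Given $\chi\in\ContocH(\R^{2n+1})$ and $\epsilon>0$, I pick $n$ with $d_{C^0}(\phi_n,\chi)<\epsilon/2$ and build a single $h\in\Contoc(\R^{2n+1})$ that (i) agrees with $s_n^{-1}$ on a neighborhood of $B_n$, so that $hgh^{-1}$ coincides with $\phi_n$ on $\supp\phi_n=s_n^{-1}(B_n)$; (ii) performs an additional strong contact squeezing inside each $B_m$, $m\ne n$, with $\|\psi_m\|_{C^0}\ge\epsilon/2$ (only finitely many, since $\|\psi_m\|_{C^0}<2^{-m}$), to push $\|h\psi_mh^{-1}\|_{C^0}<\epsilon/2$; and (iii) is the identity off a compact set. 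Pairwise disjointness of the $B_m$ plus standard contact cutoff/extension assembles these local prescriptions into one compactly supported contact map. Summing the three regimes gives $d_{C^0}(hgh^{-1},\phi_n)<\epsilon/2$, hence $d_{C^0}(hgh^{-1},\chi)<\epsilon$. The main technical point is the explicit construction of the single $h$ realising all these local behaviours simultaneously.

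\medskip

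\textit{The negative half.} On $\R^{2n}\times S^1$ I would invoke Sandon's spectral norm $c$, which is conjugation-invariant by construction and, by the paper's own main technical result, $C^0$-locally bounded. This permits an extension $\overline c\colon\ContocH(\R^{2n}\times S^1)\to\Z_{\ge 0}$ by taking the $\liminf$ along smooth $C^0$-approximations; this extension is automatically $C^0$-lower-semicontinuous and inherits conjugation-invariance from $c$. The EKP non-squeezing theorem supplies compactly supported contactomorphisms with $c\to\infty$, so $\overline c$ is unbounded on $\ContocH(\R^{2n}\times S^1)$. Now if $g\in\ContocH(\R^{2n}\times S^1)$ had dense conjugacy class, conjugation-invariance would give $\overline c(hgh^{-1})=\overline c(g)$ for every $h$, and $C^0$-lower-semicontinuity along any sequence $h_kgh_k^{-1}\to\phi$ would force
$$\overline c(\phi)\le\liminf_k\overline c(h_kgh_k^{-1})=\overline c(g)$$
for every $\phi\in\ContocH(\R^{2n}\times S^1)$, contradicting unboundedness of $\overline c$. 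The only subtle point is the upgrade from ``$C^0$-locally bounded'' to lower-semicontinuity, which is handled by using the $\liminf$-extension and checking that it agrees with $c$ on the smooth part.
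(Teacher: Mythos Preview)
Your construction of $g$ matches the paper's. The gap is in the conjugating map $h$. You ask $h$ to agree with $s_n^{-1}$ on a neighbourhood of $B_n$ and simultaneously to squeeze inside each bad $B_m$. But $h$ is a bijection, so the images of disjoint sets are disjoint; since $h(\text{nbhd of }B_n)=s_n^{-1}(\text{nbhd of }B_n)\supset\supp\phi_n$ while $h(B_m)\subset B_m$, you would need $B_m\cap\supp\phi_n=\emptyset$ for every such $m$. Nothing in the setup guarantees this: the $\phi_n$ form a $C^0$-dense sequence, so $\supp\phi_n$ can be an arbitrary compact set, in particular one containing many of the $B_m$. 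In that situation no such $h$ exists, and even if you drop the prescription on those $B_m$, the conjugate $hgh^{-1}$ will not coincide with $\phi_n$ on $\supp\phi_n$: the pieces $h\psi_mh^{-1}$ with $h(\supp\psi_m)\subset\supp\phi_n$ contaminate it. The sentence ``standard contact cutoff/extension assembles these local prescriptions into one compactly supported contact map'' is precisely where the argument breaks.

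The paper sidesteps this by never unsqueezing. It first uses a cut-off Reeb flow (a $z$-translation supported in a thin slab containing only the $i$-th piece) to push that piece far away, leaving all other pieces in place inside a fixed ball. Then a single squeezing, supported in a ball that contains all the remaining pieces but misses the relocated one, shrinks their total $C^0$-norm below $\epsilon$. This shows that a \emph{conjugate} of $f_i$ lies in $\conjH(g)$; since $\conjH(g)$ is itself conjugation-invariant, $f_i\in\conjH(g)$ follows. The key manoeuvre you are missing is to aim for an arbitrary conjugate of $\phi_n$ rather than $\phi_n$ itself.

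\textbf{Negative half.} Your argument via the $\liminf$-extension of Sandon's norm is correct and is one of the two proofs the paper gives (the one deduced from Theorem~\ref{Gamma_Homeo}). The paper also records a shorter direct argument using only contact non-squeezing: if $\supp g\subset B(r)\times S^1$ then, by non-squeezing, no conjugate $hgh^{-1}$ can have $B(r+1)\times S^1$ inside its support, so each conjugate fixes some point of $\overline{B}(r+1)\times S^1$; hence any $C^0$-limit of conjugates also fixes a point there, and a contactomorphism displacing every point of $\overline{B}(r+1)\times S^1$ cannot lie in $\conjH(g)$. This bypasses the spectral-norm machinery entirely.
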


As we mentioned,  Theorem \ref{thm: Rokhlin property of the contactomorphisms of the ball} shows a link between group properties of $\ContocH$ and contact flexibility, given by contact squeezing, or contact rigidity, given by contact non-squeezing.  More concretely, the proof of the first part of this theorem relies on an explicit construction of a contact homeomorphism whose conjugacy class is $C^0$-dense in $\ContocH(\R^{2n+1}).$ The construction is possible due to the fact that balls in $\R^{2n+1}$ can be arbitrarily squeezed by elements of $\Contoc(\R^{2n+1}).$ This is not the case for $\R^{2n}\times S^1$, as shown by the celebrated contact non-squeezing theorem of Eliashberg, Kim and Polterovich \cite{EKP06}.  The proof of the second part of Theorem \ref{thm: Rokhlin property of the contactomorphisms of the ball} is an application of the contact non-squeezing theorem. 

\begin{rem}
Let $\Ham_c(\R^{2n})$ be the group of compactly supported Hamiltonian diffeomorphisms of $(\R^{2n},dx \wedge dy)$ and denote by $\HamH(\R^{2n})$ the $C^0$-closure of $\Ham_c(\R^{2n})$ inside $\Homeo_c(\R^{2n}).$ In contrast to the contact case, an elementary argument implies that $(\HamH(\R^{2n}), \tau_{C^0})$ does not have Rokhlin property.  Namely, if $\phi,\psi \in \HamH(\R^{2n})$ we have that $\supp (\psi \phi \psi^{-1})=\psi (\supp (\phi))$ and thus the volume of the interior of the support is an invariant under conjugation, which prevents Rokhlin property.  The same argument shows that the group of compactly supported volume preserving homeomorphisms of an arbitrary open manifold does not have Rokhlin property.
\end{rem}

\begin{rem} Let $B(r)\subset \R^{2n}$ denote the open ball of radius\footnote{This notation is sometimes used for a ball of capacity $r.$ Throughout the paper we also consider balls in $\R^{2n+1}$ and thus choose the notation which makes sense in both cases. } $r.$ The same argument we use to prove the second part of Theorem \ref{thm: Rokhlin property of the contactomorphisms of the ball} shows that $(\ContocH(B(r)\times S^1), \tau_{C^0})$ does not have Rokhlin property for any $r>\frac{1}{\sqrt{\pi}}.$ It is based on the fact that contact non-squeezing holds in $B(r)\times S^1$ for $r>\frac{1}{\sqrt{\pi}}$, but not for $r\leq \frac{1}{\sqrt{\pi}}.$ This motivates the following question:
\begin{question}
Does $(\ContocH(B(r)\times S^1), \tau_{C^0})$ have Rokhlin property for some $r\leq \frac{1}{\sqrt{\pi}}$?
\end{question}
\end{rem}

\subsection{Spectral norm on contact homeomorphisms}\label{Sec:C0Sandon}


In \cite{San2}, Sandon defined a conjugation-invariant norm $\gamma:\Contoc(\R^{2n}\times S^1) \to \Z.$ The definition is analogous to the one of the spectral norm on $\Ham_c(\R^{2n})$ given by Viterbo in \cite{Vit92}. Both definitions rely on spectral invariants extracted from the generating function homology and we refer to them as spectral norms on the respective groups.

Viterbo's definition has been extended to more general symplectic manifolds using spectral invariants coming from Floer theory, see \cite{Sch00,Oh05}. In recent years, a number of works established $C^0$-continuity of the spectral norm on $\Ham$ for various classes of symplectic manifolds, see \cite{Vit92,Sey13,BHS21,She18, Kaw19}.  These and related $C^0$-continuity results for invariants coming from various flavours of Floer theory had striking dynamical consequences, see \cite{Sey13,LRSV21,DCGHS24}. 

Ideally, one would like to prove $C^0$-continuity of $\gamma.$ This is not possible due to the fact that $\gamma$ is integer valued.  In fact, conjugation-invariant norms on $\Contoc$ are never continuous with respect to any topology, see Remark \ref{Remark:Fine_Norm}. With this in mind, we propose the following alternative to continuity.

Let $G$ be a topological group and $\| \cdot \|:G\to \R$ a conjugation-invariant norm. $\| \cdot \|$ is said to be \textit{locally bounded} if there exists a neighbourhood of $id$ on which it is bounded. One readily checks that this is equivalent to $\| \cdot \|$ being bounded in a certain neighbourhood of every element of $G.$

\begin{thm}\label{Gamma_Is_Locally_Bounded}
For $\phi,\psi \in \Contoc(\R^{2n}\times S^1)$ such that $d_{C^0}(\phi, \psi)< \frac{1}{2}$ it holds
$$|\gamma(\phi)  - \gamma(\psi)| \leq 2 .$$
In particular $\gamma$ is locally bounded on $(\Contoc (\R^{2n}\times S^1),\tau_{C^0}).$
\end{thm}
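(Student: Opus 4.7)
The plan is to use the triangle inequality to reduce to controlling $\gamma$ in a small $C^0$-neighbourhood of the identity, and then to bound the spectral invariant through the translated-point description underlying Sandon's construction.

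First, since $\gamma$ is a conjugation-invariant norm it satisfies the triangle inequality together with $\gamma(g^{-1})=\gamma(g)$, whence
\[
|\gamma(\phi)-\gamma(\psi)|\le \gamma(\phi\psi^{-1}).
\]
Setting $\eta=\phi\psi^{-1}$ and substituting $y=\psi(x)$ in the definition of the $C^0$-distance yields
\[
\|\eta\|_{C^0}=\sup_y d(\eta(y),y)=\sup_x d(\phi(x),\psi(x))=d_{C^0}(\phi,\psi)<\tfrac{1}{2},
\]
so the theorem reduces to proving $\gamma(\eta)\le 2$ whenever $\eta\in\Contoc(\R^{2n}\times S^1)$ and $\|\eta\|_{C^0}<\tfrac{1}{2}$.

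For this I would appeal to the translated-point interpretation of Sandon's invariants. Recall that $\gamma$ is expressed as a sum $c_+(\eta)+c_+(\eta^{-1})$ of two non-negative integers, each obtained by rounding up a real spectral number extracted as a min-max critical value of a generating function of $\eta$. By a Lusternik--Schnirelmann argument such critical values correspond to translated points of $\eta$, i.e.\ points $x\in\R^{2n}\times S^1$ with $\eta(x)$ on the Reeb orbit through $x$ and conformal factor $1$ at $x$; the critical value is then the associated time shift $t$. The bound $\|\eta\|_{C^0}<\tfrac{1}{2}$ forces $d(\eta(x),x)<\tfrac{1}{2}$, so the representative of $t$ modulo the Reeb period $1$ lies in $(-\tfrac{1}{2},\tfrac{1}{2})$. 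Rounding up therefore produces $c_+(\eta),c_+(\eta^{-1})\in\{0,1\}$, so $\gamma(\eta)\le 2$.

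The main technical obstacle is passing from this pointwise bound on translated points to a bound on the cohomological min-max selector itself: a priori, a $C^0$-perturbation of $\eta$ could cause the selected critical value to jump by a full Reeb period. To handle this I would interpolate between the generating function of the identity (whose spectral number vanishes) and that of $\eta$ along a smooth path of generating functions quadratic at infinity, and argue that the min-max selector varies continuously away from integer jumps. The translated-point bound from the previous step then limits the accumulated jump on each side to at most one, yielding the required estimate $\gamma(\eta)\le 2$ and, in particular, local boundedness of $\gamma$ on $(\Contoc(\R^{2n}\times S^1),\tau_{C^0})$.
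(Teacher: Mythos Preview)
Your reduction via the triangle inequality to the estimate $\gamma(\eta)\le 2$ for $\|\eta\|_{C^0}<\tfrac12$ is exactly what the paper does, and you correctly identify the crux: the $C^0$-bound only controls the Reeb time shift \emph{modulo} $1$, while the spectral value $c^\pm(\eta)\in Spec_{\alpha_0}(\eta)$ is a genuine real number, so one must rule out the integer ambiguity.

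Your proposed resolution, however, has a genuine gap. Interpolating ``along a smooth path of generating functions quadratic at infinity'' from the identity to $\eta$ does not help: intermediate generating functions need not correspond to contactomorphisms, so the translated-point bound you derived is unavailable along the path, and there is no reason the selector should accumulate at most one jump. If instead you interpolate along a contact isotopy $\eta_t$, the intermediate maps $\eta_t$ need not satisfy $\|\eta_t\|_{C^0}<\tfrac12$, so again the spectrum of $\eta_t$ is not constrained away from half-integers and the continuity argument collapses. In short, you have not produced a deformation along which the spectrum avoids $\tfrac12+\Z$.

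The paper bypasses this entirely with an elementary topological argument on the action functional itself. For any compactly supported contact isotopy $\{\phi_t\}$ with $\phi_1=\eta$, set $F(x)=\int_0^1 d\theta(\dot\phi_t(x))\,dt$; then $F$ is continuous on $\R^{2n}\times S^1$, vanishes outside the support, and $F(z)=\mathcal{A}_{\alpha_0}(z)$ at every translated point $z$. If some $|F(z)|\ge\tfrac12$, connectedness yields a point $y$ with $|F(y)|=\tfrac12$, whence $\pi(y)$ and $\pi(\eta(y))$ are antipodal in $S^1$, contradicting $\|\eta\|_{C^0}<\tfrac12$. Thus $Spec_{\alpha_0}(\eta)\subset(-\tfrac12,\tfrac12)$ with no interpolation needed, and $\lceil c^+(\eta)\rceil,\lceil c^+(\eta^{-1})\rceil\in\{0,1\}$ follows immediately. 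The point is that the integer ambiguity is resolved by the intermediate value theorem applied to $F$ on the \emph{whole manifold}, not by tracking the selector along a family.
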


\begin{rem}
Being locally bounded depends on the choice of topology on $G.$ In the case of $\Contoc (\R^{2n}\times S^1)$, the same property holds with respect to the strong topology, since this topology is finer than $\tau_{C^0}.$ However, this is no longer true for the compact-open topology,  see Example \ref{Example:CO_vs_C0}.  
\end{rem}

\begin{rem}\label{Remark:Fine_Norm}
No genuine continuity result can be proven for any conjugation-invariant norm on $\Contoc.$ Indeed, recall that a conjugation-invariant norm $\| \cdot \|$ on a topological group $G$ is called {\it fine} if 0 is an accumulation point of $\|G\|=\{ \|g \| ~|~ g\in G \}.$ It was proven in \cite[Proposition 10]{San15}, following \cite[Theorem 1.11]{BIP08},   that for any contact manifold $(Y,\xi)$ there are no fine conjugation-invariant norms on $\Contoc(Y).$
\end{rem}

Using Theorem \ref{Gamma_Is_Locally_Bounded} and properties of $\gamma$ we show the following:

\begin{prop}\label{Lower_Semi_Gamma}
$\gamma : \Contoc(\R^{2n}\times S^1) \to \Z$ is $C^0$-lower semicontinuous. 
\end{prop}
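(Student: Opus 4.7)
The plan is to combine Theorem \ref{Gamma_Is_Locally_Bounded} with the subadditivity and iteration behaviour of $\gamma$. Fix $\phi \in \Contoc(\R^{2n}\times S^1)$ and a sequence $\phi_k \to \phi$ in $\tau_{C^0}$; write $L := \liminf_k \gamma(\phi_k)$. The goal is $\gamma(\phi) \leq L$. Setting $\eta_k := \phi \phi_k^{-1}$, the substitution $y = \phi_k^{-1}(x)$ identifies $d_{C^0}(\eta_k, \mathrm{id}) = d_{C^0}(\phi, \phi_k) \to 0$, so Theorem \ref{Gamma_Is_Locally_Bounded} applied to $(\eta_k, \mathrm{id})$ gives $\gamma(\eta_k) \leq \gamma(\mathrm{id}) + 2 = 2$ for $k$ large. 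Subadditivity of $\gamma$ then yields $\gamma(\phi) = \gamma(\eta_k \phi_k) \leq \gamma(\eta_k) + \gamma(\phi_k) \leq \gamma(\phi_k) + 2$, hence $\gamma(\phi) \leq L + 2$. This already gives ``lower semicontinuity up to a constant''.

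To eliminate the additive constant I would apply the same reasoning to iterates. For any fixed $m \in \N$, composition is $\tau_{C^0}$-continuous on $\Homeo_c$, so $\phi_k^m \to \phi^m$ in $\tau_{C^0}$. The previous step applied to $(\phi^m, \phi_k^m)$ gives $\gamma(\phi^m) \leq \gamma(\phi_k^m) + 2$, and subadditivity gives $\gamma(\phi_k^m) \leq m \gamma(\phi_k)$. Combining,
\[\gamma(\phi^m) \leq m\, \gamma(\phi_k) + 2.\]
Assuming a quasi-homogeneity of the form $\gamma(\phi^m) \geq m \gamma(\phi) - C$ with $C$ independent of $m$, rearrangement yields $\gamma(\phi) \leq \gamma(\phi_k) + (C+2)/m$. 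Passing to $\liminf$ over $k$ and then letting $m \to \infty$ gives $\gamma(\phi) \leq L$, as required.

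The main obstacle is the quasi-homogeneity invoked in the last step: this is a \emph{reverse} to the generic subadditivity $\gamma(\phi^m) \leq m \gamma(\phi)$ and must be extracted from the specific structure of Sandon's construction. Concretely, if $\gamma = c^+ - c^-$ with $c^{\pm}$ the spectral invariants coming from generating function homology on $\R^{2n} \times S^1$, one would need scaling estimates such as $c^+(\phi^m) \geq m c^+(\phi) - O(1)$ and $c^-(\phi^m) \leq m c^-(\phi) + O(1)$, verifiable from the min-max formulas defining $c^{\pm}$. An alternative, potentially cleaner route that bypasses quasi-homogeneity entirely would be to realise $\gamma$ as the ceiling of an $\R$-valued spectral invariant which is itself $C^0$-continuous; since ceilings of continuous functions are lower semicontinuous, this would conclude at once.
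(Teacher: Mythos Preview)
Your first paragraph is fine and recovers the weaker bound $\gamma(\phi)\leq L+2$, but the iteration step has a genuine gap that you yourself flag: the reverse inequality $\gamma(\phi^m)\geq m\gamma(\phi)-C$ is not available. Subadditivity only gives $\gamma(\phi^m)\leq m\gamma(\phi)$, and nothing in Sandon's construction yields the opposite bound with a uniform constant; there is no reason for the stable limit $\lim_m \gamma(\phi^m)/m$ to agree with $\gamma(\phi)$ up to $O(1)$. Your suggested fallback of writing $\gamma$ as the ceiling of a $C^0$-continuous $\R$-valued invariant is closer in spirit to what actually works, but as stated it does not go through either: $\gamma=\lceil c^+\rceil-\lfloor c^-\rfloor$ is not the ceiling of $c^+-c^-$, and the real-valued $c^\pm$ are not shown to be $C^0$-continuous (only the action-spectrum bound $|c^\pm(\phi)|\leq\|\phi\|_{C^0}$ for $\|\phi\|_{C^0}<\tfrac12$ is available, cf.~(\ref{eq: Spectral_Continuity})).

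The paper's argument bypasses both issues by treating $\lceil c^+\rceil$ and $-\lfloor c^-\rfloor$ separately and exploiting the \emph{mixed} triangle inequality $\lceil c^+(\psi)\rceil\geq\lceil c^+(\phi)-c^+(\phi\psi^{-1})\rceil$ from Proposition~\ref{prop: C_integer}, where the quantity inside the ceiling on the right is real-valued. The crucial arithmetical observation is this: set $\varepsilon:=\lceil c^+(\phi)\rceil-c^+(\phi)\in[0,1)$, and take $\psi$ so $C^0$-close to $\phi$ that $c^+(\phi\psi^{-1})<1-\varepsilon$ (possible by (\ref{eq: Spectral_Continuity})). Then $c^+(\phi)-c^+(\phi\psi^{-1})$ lies in $(\lceil c^+(\phi)\rceil-1,\,\lceil c^+(\phi)\rceil]$, so its ceiling is exactly $\lceil c^+(\phi)\rceil$, and one obtains $\lceil c^+(\psi)\rceil\geq\lceil c^+(\phi)\rceil$ with no loss. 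In short: rather than trying to repair an integer-valued estimate after the fact via iteration, control the fractional part of the real-valued $c^\pm$ before taking the ceiling.
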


Motivated by this result, we extend $\gamma$ to $\ContocH (\R^{2n}\times S^1)$ via limit inferior. More precisely, for any $\phi \in \ContocH (\R^{2n}\times S^1)$ we define 
$$\widetilde{\gamma}(\phi) = \max_U \min_{\psi \in U'} \gamma(\psi),$$
where $U$ is an open $C^0$-neighbourhood of $\phi$ in $\ContocH (\R^{2n}\times S^1)$ and $U' = U \cap \Contoc (\R^{2n}\times S^1).$ Theorem \ref{Gamma_Is_Locally_Bounded} guarantees that $\widetilde{\gamma}(\phi)$ is finite for all $\phi \in  \ContocH (\R^{2n}\times S^1)$ and we have the following:

\begin{thm}\label{Gamma_Homeo}
$\widetilde{\gamma}$ is a conjugation-invariant norm on $\ContocH (\R^{2n}\times S^1)$ which coincides with $\gamma$ on $\Contoc (\R^{2n}\times S^1).$ Moreover, it is locally bounded and lower semicontinuous with respect to $\tau_{C^0}.$
\end{thm}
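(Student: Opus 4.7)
The plan is to rewrite $\widetilde{\gamma}$ as a $C^0$-liminf and then verify the required properties using the two inputs already established, namely Theorem \ref{Gamma_Is_Locally_Bounded} (local boundedness of $\gamma$) and Proposition \ref{Lower_Semi_Gamma} ($C^0$-lower semicontinuity of $\gamma$ on $\Contoc$). Since $\min_{\psi \in U'} \gamma(\psi)$ is monotone non-decreasing as $U$ shrinks, the definition reads
$$\widetilde{\gamma}(\phi) \;=\; \liminf_{\substack{\psi \to \phi \\ \psi \in \Contoc(\R^{2n}\times S^1)}} \gamma(\psi),$$
and, $\gamma$ being $\Z_{\geq 0}$-valued and (as we check below) finite, the inner $\min$ stabilizes as $U$ shrinks, producing a \emph{realizing sequence} $\phi_n \in \Contoc$ with $\phi_n \to \phi$ and $\gamma(\phi_n) = \widetilde\gamma(\phi)$ for all large $n$. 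The entire proof is then carried out by transporting statements about $\gamma$ along such sequences.

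\emph{Finiteness and local boundedness} come simultaneously. Given $\phi \in \ContocH(\R^{2n}\times S^1)$, we pick $\phi_0 \in \Contoc(\R^{2n}\times S^1)$ with $d_{C^0}(\phi, \phi_0) < \tfrac{1}{8}$; for any $\psi \in \ContocH$ with $d_{C^0}(\phi, \psi) < \tfrac{1}{8}$ and any $\psi' \in \Contoc$ with $d_{C^0}(\psi, \psi') < \tfrac{1}{8}$, the triangle inequality gives $d_{C^0}(\phi_0, \psi') < \tfrac{1}{2}$, so Theorem \ref{Gamma_Is_Locally_Bounded} yields $\gamma(\psi') \leq \gamma(\phi_0) + 2$; taking the liminf as $\psi' \to \psi$ gives $\widetilde\gamma(\psi) \leq \gamma(\phi_0) + 2$ on a whole $\tau_{C^0}$-neighborhood of $\phi$. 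For \emph{coincidence with $\gamma$ on $\Contoc$}, the constant sequence gives $\widetilde\gamma(\phi) \leq \gamma(\phi)$, and the reverse inequality follows from Proposition \ref{Lower_Semi_Gamma} together with the integrality of $\gamma$: these provide a $\tau_{C^0}$-neighborhood $V \subset \Contoc$ of $\phi$ on which $\gamma \geq \gamma(\phi)$, hence $\min_{\psi' \in U'} \gamma(\psi') \geq \gamma(\phi)$ for any open $U \subset \ContocH$ with $U \cap \Contoc \subset V$. \emph{Lower semicontinuity} of $\widetilde\gamma$ is equally formal: if $\phi_k \to \phi$ in $\tau_{C^0}$, then for any open $U \ni \phi$ one has $\phi_k \in U$ eventually, so $U$ is an open neighborhood of $\phi_k$ and $\widetilde\gamma(\phi_k) \geq \min_{\psi' \in U'} \gamma(\psi')$; taking $\liminf_k$ and then $\sup_U$ yields $\liminf_k \widetilde\gamma(\phi_k) \geq \widetilde\gamma(\phi)$.

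For the \emph{norm axioms}, we fix $\phi, \psi, h \in \ContocH$ and choose realizing sequences $\phi_n, \psi_n, h_n \in \Contoc$. Because $(\ContocH, \tau_{C^0})$ is a topological group (Lemma \ref{Lemma:Top_Group}), the products $\phi_n\psi_n$, $\phi_n^{-1}$ and $h_n \phi_n h_n^{-1}$ converge in $\tau_{C^0}$ to $\phi\psi$, $\phi^{-1}$ and $h\phi h^{-1}$ respectively, and the triangle inequality, inverse invariance and conjugation invariance of $\gamma$ on $\Contoc$ pass to $\widetilde\gamma$ by taking liminf. Non-degeneracy is the only axiom requiring comment: if $\widetilde\gamma(\phi) = 0$, the realizing sequence satisfies $\gamma(\phi_n) = 0$ and hence $\phi_n = id$, so $\phi = id$. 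We expect no conceptual obstacle here---the failure of genuine $C^0$-continuity of $\gamma$ is already absorbed by Theorem \ref{Gamma_Is_Locally_Bounded}; what remains is the routine bookkeeping of verifying that each liminf interchange and each use of the topological group structure is legitimate.
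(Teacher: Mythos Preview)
Your proof is correct and follows essentially the same strategy as the paper: both use realizing sequences in $\Contoc$ together with Theorem \ref{Gamma_Is_Locally_Bounded}, Proposition \ref{Lower_Semi_Gamma}, and the topological group structure (Lemma \ref{Lemma:Top_Group}) to transport the norm axioms from $\gamma$ to $\widetilde\gamma$. Your treatment is in fact slightly more complete, since you explicitly address conjugation invariance (via realizing sequences for both $\phi$ and the conjugator $h$), which the paper's written proof omits.
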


As an immediate corollary of Theorem \ref{Gamma_Homeo} we obtain another proof that $(\ContocH(\R^{2n}\times S^1), \tau_{C^0})$ does not have Rokhlin property, see Theorem \ref{thm: Rokhlin property of the contactomorphisms of the ball}. Indeed, if it had Rokhlin property, i.e. $\conj(g) $ was dense for some $g\in  \ContocH (\R^{2n}\times S^1)$, Theorem \ref{Gamma_Homeo} would imply that for every $f\in \ContocH (\R^{2n}\times S^1)$,  $|\widetilde{\gamma}(f)-\widetilde{\gamma}(g)|$ is bounded. This is not possible since $\gamma$, and hence also $\widetilde{\gamma}$, is unbounded, see Theorem \ref{thm:Gamma_Basic}.

\begin{rem}
A conjugation-invariant norm on $\Contoc (T^*N\times S^1)$, $N$ being a closed manifold, has been defined by Zapolsky in \cite{Zapol13}. This norm is similar in spirit to $\gamma$, albeit the connecting with translated points is not as strong as in case of $\gamma.$ It would be interesting to explore if this norm is $C^0$-locally bounded.
\end{rem}

\subsection{Conjugation norm and contact fragmentation}

Assume again that $G$ is a topological group.  For $g\in G$, denote by $\conjH(g)$ the closure of $\conj(g).$ We will call $G$ {\it conjugation-decomposable} if the set
$$S(G)= \{ g\in G ~|~ id \in \conjH(g) \} $$
is a generating set for $G.$ If $G$ is conjugation-decomposable, we define the {\it conjugation norm} on $G$, denoted by $\| \cdot \|_{conj}$ as the word norm associated to $S(G).$ In other words, for $g\neq id$,
$$\| g\|_{conj} = \min \{ k~|~ \exists g_1,\ldots , g_k\in S(G),~g=g_1\ldots  g_k \}.$$
Manifestly, $\|\cdot \|_{conj}$ is integer valued and one readily checks that it is a conjugation-invariant norm on $G.$ Our first goal is to show that for every contact manifold $(Y,\xi)$, $(\Contoc(Y),d_{C^0})$ is conjugation-decomposable. This follows from the contact squeezing of balls in $\R^{2n+1}$ and the contact fragmentation lemma which we now recall. 

Let $\phi\in \Contoc(Y)$ and $\mathcal{U}= \{U_1,\ldots , U_m \}$ a finite cover of $\supp (\phi)$ by Darboux balls\footnote{By a Darboux ball we mean an image under a contact embedding of an open ball centered at 0 in $(\R^{2n+1}, \xi_0).$}. Contact fragmentation lemma, see \cite[p. 148]{Bany97}, claims that there exist $\phi_1,\ldots , \phi_N \in \Contoc(Y)$, each supported in one of $U_i$ such that $\phi = \phi_1 \circ \ldots \circ \phi_N.$ The minimal $N$ for which such decomposition of $\phi$ exists is called the {\it contact fragmentation norm with respect to} $\mathcal{U}$ and denoted by $\| \phi \|_{\mathcal{U}}.$ The {\it contact fragmentation norm of} $\phi$ is defined as
$$\| \phi \|_{frag}=\min_{\mathcal{U}} \| f \|_{\mathcal{U}},$$
where minimum runs over all covers of $\supp (\phi)$ by Darboux balls - \cite{BIP08,CS15}. Contact fragmentation norm is an example of a conjugation-invariant norm on $\Contoc(Y).$

\begin{thm}\label{Thm:Conjugation_Comparison}

Let $(Y,\xi)$ be an arbitrary contact manifold equipped with a Riemannian metric. Then $(\Contoc(Y),\tau_{C^0})$ is conjugation-decomposable and for every $\phi \in \Contoc(Y)$,  it holds
\begin{equation}\label{Inequality_Conj_Frac}
 \| \phi \|_{conj} \leq \| \phi \|_{frac}.
\end{equation}
Moreover, when $(Y,\xi)=(\R^{2n}\times S^1,\xi_0)$ we have that
\begin{equation}\label{Inequality_Sandon_Conj_Frac}
\frac{1}{2}\gamma(\phi) \leq \| \phi \|_{conj} \leq \| \phi \|_{frac}.
\end{equation}
\end{thm}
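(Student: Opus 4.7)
The plan is to reduce both assertions to a single observation: every $\phi \in \Contoc(Y)$ supported in a Darboux ball $U \subset Y$ belongs to $S(\Contoc(Y))$. Granting this, conjugation-decomposability of $\Contoc(Y)$ and inequality \eqref{Inequality_Conj_Frac} follow immediately from the contact fragmentation lemma, while \eqref{Inequality_Sandon_Conj_Frac} is obtained by combining Theorem \ref{Gamma_Is_Locally_Bounded} with the triangle inequality for $\gamma$.

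To prove the observation, fix a slightly larger Darboux ball $U \subset U' \subset Y$ and use contact squeezing of balls in $(\R^{2n+1},\xi_0)$, transported via the Darboux chart on $U'$, to construct a sequence $\psi_n \in \Contoc(Y)$ with $\supp(\psi_n) \subset U'$ such that $\psi_n(\supp(\phi))$ has diameter tending to zero. Since $\supp(\psi_n \phi \psi_n^{-1}) = \psi_n(\supp(\phi))$ and $\psi_n\phi\psi_n^{-1}$ is the identity outside this set, $d_{C^0}(\psi_n \phi \psi_n^{-1}, id)$ is bounded by this diameter; hence $\psi_n \phi \psi_n^{-1} \to id$ in $C^0$, i.e. $\phi \in S(\Contoc(Y))$. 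The only mild technical point is the passage from the conformal squeezing of balls in $(\R^{2n+1},\xi_0)$ to a globally defined compactly supported element of $\Contoc(Y)$; this is taken care of by the slightly larger chart $U'$ together with a standard contact cutoff.

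With the observation in hand, pick any $\phi \in \Contoc(Y)$ and a fragmentation $\phi = \phi_1 \circ \cdots \circ \phi_N$ realizing $\|\phi\|_{frag}$, each $\phi_i$ supported in a Darboux ball. Each $\phi_i$ lies in $S(\Contoc(Y))$, so $S(\Contoc(Y))$ generates $\Contoc(Y)$ and $\|\phi\|_{conj} \leq N = \|\phi\|_{frag}$, proving conjugation-decomposability and \eqref{Inequality_Conj_Frac}. For \eqref{Inequality_Sandon_Conj_Frac}, specialize to $(Y,\xi)=(\R^{2n}\times S^1,\xi_0)$. First, for any $\phi \in S(\Contoc(Y))$ some conjugate $h\phi h^{-1}$ satisfies $d_{C^0}(h\phi h^{-1}, id) < \tfrac{1}{2}$, so Theorem \ref{Gamma_Is_Locally_Bounded} (applied with $\psi=id$, using $\gamma(id)=0$) gives $\gamma(h\phi h^{-1}) \leq 2$, and conjugation invariance of $\gamma$ forces $\gamma(\phi) \leq 2$. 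For arbitrary $\phi$, write $\phi = \phi_1 \circ \cdots \circ \phi_k$ with $\phi_i \in S(\Contoc(Y))$ and $k=\|\phi\|_{conj}$; the triangle inequality for $\gamma$ then yields $\gamma(\phi) \leq \sum_i \gamma(\phi_i) \leq 2k = 2\|\phi\|_{conj}$, as required.
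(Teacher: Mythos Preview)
Your proof is correct and follows essentially the same route as the paper: the key observation that any $\phi$ supported in a Darboux ball lies in $S(\Contoc(Y))$ is precisely the paper's Lemma~\ref{lem: closure of the conjugacy class in the contact group}, and the remaining deductions via fragmentation, Theorem~\ref{Gamma_Is_Locally_Bounded}, and the triangle inequality for $\gamma$ match the paper's argument line by line. One cosmetic point: rather than enlarging $U$ to a bigger Darboux ball $U'$ (which need not exist), the paper instead shrinks to a smaller ball $B(r)\subset B(R)=U$ containing $\supp(\phi)$ and performs the cutoff there; it also records convergence in the strong topology (hence independently of the chosen Riemannian metric), but for the statement as given your $C^0$ argument suffices.
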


One may show that $\frac{1}{2}\gamma(\cdot) \neq  \| \cdot \|_{conj},$ see Example \ref{Example: Not_Equal} and Remark \ref{Remark:Simpler_Example}.  However, we do not know if $\| \cdot \|_{conj} \neq \| \cdot \|_{frac},$ even though it seems unlikely that the two norms are equal. 

\begin{coro}\label{Coro: Spec_Unbounded}
$\|\cdot \|_{frac}$ is unbounded on $\Contoc(\R^{2n}\times S^1)$. 
\end{coro}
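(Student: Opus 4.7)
The plan is to deduce the corollary directly from the chain of inequalities established in Theorem~\ref{Thm:Conjugation_Comparison}, combined with the known unboundedness of Sandon's spectral norm $\gamma$ on $\Contoc(\R^{2n}\times S^1)$ (which is recorded in Theorem~\ref{thm:Gamma_Basic}, already referenced in the discussion following Theorem~\ref{Gamma_Homeo}).

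Concretely, by inequality~(\ref{Inequality_Sandon_Conj_Frac}) of Theorem~\ref{Thm:Conjugation_Comparison}, for every $\phi \in \Contoc(\R^{2n}\times S^1)$ we have
\[
\tfrac{1}{2}\gamma(\phi) \leq \|\phi\|_{frac}.
\]
If $\|\cdot\|_{frac}$ were bounded, say by some constant $C>0$, this would force $\gamma(\phi) \leq 2C$ for all $\phi \in \Contoc(\R^{2n}\times S^1)$, contradicting the fact that $\gamma$ is unbounded on this group. Hence $\|\cdot\|_{frac}$ must be unbounded as well.

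There is essentially no obstacle here; the work has all been done in proving the comparison inequality of Theorem~\ref{Thm:Conjugation_Comparison} and in Sandon's construction of contactomorphisms of $\R^{2n}\times S^1$ with arbitrarily large $\gamma$. The only point worth emphasizing is that the lower bound on $\|\phi\|_{frac}$ is specific to the contact manifold $\R^{2n}\times S^1$, since it ultimately relies on contact non-squeezing via the spectral norm $\gamma$; for general contact manifolds one does not have such a lower bound at one's disposal.
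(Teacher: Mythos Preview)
Your proof is correct and is exactly the argument the paper intends: the corollary is stated immediately after Theorem~\ref{Thm:Conjugation_Comparison} and is meant to follow from inequality~(\ref{Inequality_Sandon_Conj_Frac}) together with the unboundedness of $\gamma$ recorded in Theorem~\ref{thm:Gamma_Basic}. The paper does not spell this out as a separate proof, but your write-up matches the implicit reasoning verbatim.
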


Corollary \ref{Coro: Spec_Unbounded} was first proven in \cite[Corollary 1.3]{CS15}.  Another proof follows from the results of \cite{FPR18}.  Our method, which is based on $C^0$-properties of contactomorphisms, is different and independent from those two approaches.  However, Theorem \ref{Thm:Conjugation_Comparison} is related to the results of \cite{FPR18} as we will now explain.

In \cite[Theorem 3.3]{FPR18} it was proven that for every conjugation-invariant norm $\| \cdot \|$ on $\Contoc(Y)$ which is locally bounded in $C^1$-topology, it holds 
\begin{equation}\label{eq: Universal_bound_frag}
\| \cdot \| \leq C  \| \cdot \|_{frac}
\end{equation}
for some constant $C=C(\| \cdot \|).$ The $C^1$-boundedness condition for $\gamma$ is verified in \cite{CS15,FPR18} which yields Corollary \ref{Coro: Spec_Unbounded} since $\gamma (\cdot) \leq C(\gamma) \| \cdot \|_{frac}.$ With this in mind,  (\ref{Inequality_Sandon_Conj_Frac}) can be thought of as making the constant $C(\gamma)$ explicit.  On the other hand, Theorem \ref{Gamma_Is_Locally_Bounded} implies that $\gamma$ is not only $C^1$, but in fact $C^0$-locally bounded. It readily follows from the definition of $\| \cdot \|_{conj}$ that if $\| \cdot \|$ is any conjugation-invariant norm which is $C^0$-locally bounded then
\begin{equation}\label{eq: Universal_bound_conj}
\| \cdot \| \leq C \| \cdot \|_{conj} \leq C \| \cdot \|_{frac},
\end{equation} 
where $C=C(\| \cdot \|)$ is the bound of $\| \cdot \|$ in a $C^0$-neighbourhood of $id.$ Thus, under the condition of $C^0$-local boundedness,  (\ref{eq: Universal_bound_conj}) is a strengthening of (\ref{eq: Universal_bound_frag}), although, as we mentioned above, we are not aware of an example for which $\| \cdot \|_{conj} \neq \| \cdot \|_{frac}.$

\begin{rem}
Theorem \ref{Thm:Conjugation_Comparison} does not apply to $\ContocH(Y).$ This is due to the fact that contact fragmentation lemma is not known to hold for elements of $\ContocH(Y).$ It would be interesting to extend the fragmentation lemma to $\ContocH$, see \cite{Fat80,Ser24} for related results in the Hamiltonian setup.
\end{rem}

The definition of $\|\cdot \|_{conj}$ fits into a general framework of defining conjugation-invariant norms on topological groups. Namely, given a topological group $G$ and $P\subset G$ such that $P^{-1}=P$, $P$ is conjugation-invariant and $P$ generates $G$, {\it the word norm associated to $P$} is given by
\begin{equation}\label{eq:general_fragmentation}
\forall g\neq id, ~ \| g\|_{P} = \min \{ k~|~ \exists g_1,\ldots , g_k\in P,~g=g_1\ldots  g_k \}.
\end{equation}
Taking $P=S(G)$ yields $\| \cdot \|_{conj}.$ More generally, if $Q\subset G$ is any subset such that $Q^{-1}=Q$, $\conj (Q)= \{ gfg^{-1}~|~f\in Q,g\in G \} $ is conjugation-invariant and $\conj (Q)^{-1}=\conj (Q).$ Assuming that $\conj (Q)$ also generates $G$, we may apply (\ref{eq:general_fragmentation}) to $P= \conj (Q).$

Now, let $(Y,\xi)$ be a contact manifold equipped with an arbitrary Riemannian metric and denote by $\mathcal{B}_r\subset \ContocH(Y)$ the ball of radius $r$ in $d_{C^0}.$ By definition, $\mathcal{B}_r^{-1}=\mathcal{B}_r$,  and we claim that $\mathcal{B}_r$ is a generating set of $\ContocH(Y).$ Indeed, for every $\phi \in \ContocH(Y)$ there exists $\psi \in \Contoc(Y)$ such that $\phi \psi^{-1} \in \mathcal{B}_r.$ Thus, it is enough to show that $\mathcal{B}_r$ generates $\Contoc(Y)$, which can be done by cutting the isotopy $\{ \psi_t\}$ which generates $\psi \in \Contoc(Y)$ into small time intervals.

Thus, we may apply the above definition to $G=\ContocH(Y), Q=\mathcal{B}_r.$ We obtain
$$\forall \phi\neq id, ~ \| \phi\|_{r} = \min \{ k~|~ \exists \phi_1,\ldots , \phi_k,~\conj(\phi_i)\cap \mathcal{B}_r \neq \emptyset,~\phi=\phi_1\ldots  \phi_k \}.$$
It readily follows that for every $0<r_1 \leq r_2$ and every $\phi \in \ContocH(Y)$
$$\| \phi \|_{r_2} \leq \| \phi \|_{r_1} \leq  \| \phi \|_{conj}.$$
In this sense, $\| \cdot \|_{conj}$ can be though of as a limit as $r\to 0$ of $\|\cdot \|_{r}$ and we ask the following question.

\begin{question}
Is it true that for every $(Y,\xi)$ and every $\phi \in \ContocH(Y)$ it holds $\lim_{r\to 0^+} \| \phi \|_r=\| \phi \|_{conj}$?
\end{question}

\subsection{Quantitative weak conjugacy}\label{Section:Intro_Classes}

In order to quantify the failure of the Rokhlin property we introduce the following notion. Let $G$ be a topological group, $k\geq 1$ an integer and for $h\in G$ denote by $\conjH(h)$ the closure of $\conj(h).$ We say that $f,g \in G$ are {\it $k$-conjugation connected} if there exist $\phi_0, \ldots ,\phi_k \in G$ such that
\begin{enumerate}
\item[1)] $\phi_0=f$, $\phi_k=g$;
\item[2)] For each $i=0,\ldots , k-1$ there exists $\psi_i\in G$ such that $\phi_i ,\phi_{i+1} \in \conjH(\psi_i).$
\end{enumerate}
We now define, for $f\neq g$,
$$d_{cc}(f,g)=\min \{ k~ |~ f \text{ and } g \text{ are } k\text{-conjugation connected}  \}$$
and $d_{cc}(g,g)=0$ for all $g\in G.$ If $f$ and $g$ are not $k$-conjugation connected for any $k$ we set $d_{cc}(f,g)=+\infty.$

One readily checks that $d_{cc}$ is an extended metric on $G$, i.e. a metric which might take the value $+\infty.$ In general, there is no reason for $d_{cc}$ to be bi-invariant and in fact it is neither left nor right invariant for $G=(\ContocH(\R^{2n}\times S^1),\tau_{C^0})$, see Example \ref{Example_DCC}. If $G$ has Rokhlin property then $d_{cc}$ is a trivial metric.\footnote{We call a metric $d$ {\it trivial} if $d(x,y)=1$ if and only if $x \neq y.$ Such a metric is sometimes called {\it discrete}, however we choose not to use this term in order to avoid confusion with a metric which takes a discrete set of values.} Thus, $d_{cc}$ measures how much the group fails to have Rokhlin property. 

The notion of conjugation connectedness is related to the notion of weak conjugacy introduced in \cite[Definition 51]{LRSV21}. Recall that two elements $f,g$ of a topological group $G$ are said to be \textit{weakly conjugate} if for any conjugation-invariant continuous map $F:G\to X$ to a Hausdorff topological space $X$, $F(f)=F(g).$ One readily checks that if $f$ and $g$ are $k$-conjugation connected for any $k\geq 1$, then they are also weakly conjugate. In fact, in \cite[p. 2716]{LRSV21} a condition very close to conjugation connectedness is introduced as a criterion for weak conjugacy.  $d_{cc}$ can be thought of as quantification of this criterion.

\begin{thm}\label{thm: Rokhlin property of the contactomorphisms of B2nS1}
For $(\ContocH(\R^{2n}\times S^1),\tau_{C^0})$, $d_{cc}$ is unbounded.  
\end{thm}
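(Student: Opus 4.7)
The plan is to use the extended spectral norm $\widetilde{\gamma}$ from Theorem \ref{Gamma_Homeo} as an obstruction, by showing that $\widetilde{\gamma}$ can jump by at most a uniform constant along each step of a conjugation-connection chain. The first step is to transfer the estimate of Theorem \ref{Gamma_Is_Locally_Bounded} from $\Contoc$ to $\ContocH$: if $\phi, \phi' \in \ContocH(\R^{2n}\times S^1)$ and $d_{C^0}(\phi,\phi') < \tfrac{1}{2}$, then $|\widetilde{\gamma}(\phi)-\widetilde{\gamma}(\phi')| \leq 2$. Since the inequality is strict, one can fix a small $\varepsilon>0$ and approximate both $\phi$ and $\phi'$ in $\tau_{C^0}$ by sequences of smooth contactomorphisms within $C^0$-distance $\varepsilon$. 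Every approximating pair $(\phi_n,\phi'_m)$ then satisfies $d_{C^0}(\phi_n,\phi'_m)<\tfrac{1}{2}$, so Theorem \ref{Gamma_Is_Locally_Bounded} bounds $|\gamma(\phi_n)-\gamma(\phi'_m)|$ by $2$, and passing to the liminf that defines $\widetilde{\gamma}$ yields the desired inequality.

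The key consequence is that $\phi \in \conjH(\psi)$ implies $|\widetilde{\gamma}(\phi)-\widetilde{\gamma}(\psi)| \leq 2$. Indeed, writing $\phi = \lim_n h_n \psi h_n^{-1}$ in $\tau_{C^0}$, for $n$ large enough we have $d_{C^0}(\phi, h_n \psi h_n^{-1}) < \tfrac{1}{2}$. Since $\widetilde{\gamma}$ is conjugation invariant, $\widetilde{\gamma}(h_n \psi h_n^{-1}) = \widetilde{\gamma}(\psi)$ for every $n$, and the previous estimate applied to $\phi$ and $h_n\psi h_n^{-1}$ closes the argument.

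A chain argument then finishes the proof. If $f = \phi_0, \ldots, \phi_k = g$ is a $k$-conjugation connection with bridges $\psi_0,\ldots,\psi_{k-1}$, applying the key consequence to both $\phi_i \in \conjH(\psi_i)$ and $\phi_{i+1} \in \conjH(\psi_i)$ and using the triangle inequality yields $|\widetilde{\gamma}(\phi_i) - \widetilde{\gamma}(\phi_{i+1})| \leq 4$. Telescoping gives $|\widetilde{\gamma}(f)-\widetilde{\gamma}(g)| \leq 4\, d_{cc}(f,g)$. Unboundedness of $\gamma$ on $\Contoc(\R^{2n}\times S^1)$ (Theorem \ref{thm:Gamma_Basic}) provides smooth $g_k$ with $\gamma(g_k) \to \infty$; since $\widetilde{\gamma}$ coincides with $\gamma$ on $\Contoc(\R^{2n}\times S^1)$, we obtain $d_{cc}(id, g_k) \to \infty$, proving that $d_{cc}$ is unbounded.

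The point requiring most care is the first step, namely the quantitative $C^0$-estimate for $\widetilde{\gamma}$ on all of $\ContocH$: we need the uniform pair $(\tfrac{1}{2}, 2)$ to transfer from Theorem \ref{Gamma_Is_Locally_Bounded}, rather than merely the qualitative local boundedness stated in Theorem \ref{Gamma_Homeo}. Once this transfer is in place, the argument reduces to a short combination of conjugation invariance of $\widetilde{\gamma}$, the triangle inequality applied to the chain, and the existing unboundedness of $\gamma$.
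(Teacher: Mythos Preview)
Your proof is correct and follows essentially the same route as the paper: use conjugation invariance and $C^0$-local boundedness of $\widetilde{\gamma}$ to get a uniform bound on each link of a conjugation-connection chain, telescope, and invoke unboundedness of $\gamma$. Your explicit constant $|\widetilde{\gamma}(f)-\widetilde{\gamma}(g)|\le 4\, d_{cc}(f,g)$ is exactly what the paper records in Remark~\ref{Remark:cc_c+}; the only cosmetic difference is that you transfer the $(\tfrac{1}{2},2)$-estimate to $\ContocH$ by a direct liminf/approximation argument, whereas the paper invokes the triangle inequality for $\widetilde{\gamma}$ (already established in Theorem~\ref{Gamma_Homeo}) together with $\widetilde{\gamma}\le 2$ on the $\tfrac{1}{2}$-ball around $id$.
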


Theorem \ref{thm: Rokhlin property of the contactomorphisms of B2nS1} is a direct consequence of Theorem \ref{Gamma_Homeo} and unboundedness of $\gamma$, see Section \ref{Sec:Proof_cc} for details. It readily implies the second part of Theorem \ref{thm: Rokhlin property of the contactomorphisms of the ball} as $d_{cc}$ is not trivial for $(\ContocH(\R^{2n}\times S^1),\tau_{C^0}).$ Moreover, it gives a better understanding of the conjugacy classes in $\ContocH(\R^{2n}\times S^1)$, since the argument based on non-squeezing, which we provide in the proof of Theorem \ref{thm: Rokhlin property of the contactomorphisms of the ball}, does not even show that $d_{cc}$ is not a trivial metric. That being said, our understanding of $d_{cc}$ is still quite limited. For example, we do not know the answer to the following basic question:

\begin{question}
Are any two elements of $(\ContocH(\R^{2n}\times S^1),\tau_{C^0})$ $k$-conjugation connected for some finite $k$? In other words, is $d_{cc}$ a genuine metric?
\end{question}

\subsection{Prequantization spaces}\label{Sec: Prequantization spaces}

Results from Sections \ref{Section:Intro_Rokhlin} - \ref{Section:Intro_Classes} partially extend to more general prequantization spaces.  Let $(W,d\lambda)$ be a connected exact symplectic manifold.{\it The prequantization space} of $W$ is a contact manifold $(W\times S^1, \alpha=\lambda+d\theta)$, where $\theta$ is a coordinate on $S^1$ induced from the quotient $S^1=\R / \Z.$

From now on, we assume that $W$ is a Liouville manifold, i.e. a completion of a Liouville domain, such that\footnote{The condition on the first Chern class is assumed in accordance with \cite{AM18}. It seems likely that it can be dropped.} $c_1|_{\pi_2(W)}=0$, $g$ is a Riemannian metric on $W$ and $W\times S^1$ is equipped with a product metric $g \oplus g_{std}$.  We denote by $SH(W)$ the symplectic homology of $W.$ The following result is a generalization\footnote{To be precise, in order for Theorem \ref{thm:Rok_Preq} to generalize Theorem \ref{thm: Rokhlin property of the contactomorphisms of the ball}, we should take the primitive $\frac{1}{2}(\sum_{i=1}^n x_i dy_i - y_i d x_i)$ for $\omega_0$ on $\R^{2n}.$ Since the proof of both theorems relies on contact non-squeezing, the choice of a primitive is irrelevant, see Remark \ref{Remark_Change_of_Basis}.} of the second part of Theorem \ref{thm: Rokhlin property of the contactomorphisms of the ball}.

\begin{thm}\label{thm:Rok_Preq}
If $SH(W)=0$ then $(\ContocH(W \times S^1), \tau_{C^0})$ does not have Rokhlin property.
\end{thm}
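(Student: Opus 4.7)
The strategy is to imitate the argument giving the second part of Theorem~\ref{thm: Rokhlin property of the contactomorphisms of the ball}: produce a conjugation-invariant, $\tau_{C^0}$-lower semicontinuous, unbounded function $F$ on $\ContocH(W\times S^1)$. Any such $F$ rules out the Rokhlin property, since if $\conj(g)$ were $\tau_{C^0}$-dense, conjugation invariance would give $F(hgh^{-1})=F(g)$ for all $h$, and lower semicontinuity would extend this bound to every $f\in\ContocH(W\times S^1)$, contradicting unboundedness. This is exactly the mechanism by which the corollary drawn right after Theorem~\ref{Gamma_Homeo} re-derives the non-Rokhlin property for $\R^{2n}\times S^1$.

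The natural candidate is a Sandon-type spectral norm $\gamma_W$ on $\Contoc(W\times S^1)$ built from the Rabinowitz--Floer theory of Albers--Merry~\cite{AM18}. The hypothesis $SH(W)=0$ is precisely what makes their construction well-posed and simultaneously yields the contact non-squeezing theorem for $W\times S^1$. From their framework one obtains that $\gamma_W$ is conjugation-invariant, integer-valued, and unbounded on $\Contoc(W\times S^1)$, the unboundedness being a direct manifestation of non-squeezing: taking compactly supported approximations of iterates of the Reeb translation along $\partial_\theta$ over large relatively compact Liouville subdomains of $W$, one produces a sequence of contactomorphisms on which $\gamma_W$ diverges, as bounding it would contradict non-squeezing.

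The principal technical step is to establish the analogue of Theorem~\ref{Gamma_Is_Locally_Bounded} for $\gamma_W$: constants $\epsilon_0,C>0$ such that $d_{C^0}(\phi,\psi)<\epsilon_0$ forces $|\gamma_W(\phi)-\gamma_W(\psi)|\le C$. By the triangle inequality this reduces to a uniform smallness bound: $\gamma_W(\xi)\le C/2$ whenever $\xi\in\Contoc(W\times S^1)$ satisfies $\|\xi\|_{C^0}<\epsilon_0$. This is where the main obstacle lies. The proof of Theorem~\ref{Gamma_Is_Locally_Bounded} exploits features specific to Sandon's generating-function construction on $\R^{2n}\times S^1$; transporting the estimate to the Rabinowitz--Floer setting requires an intrinsic control of action spectra under $C^0$-small perturbations, most plausibly obtained by combining a continuation argument between the Rabinowitz--Floer complexes of $\xi$ and of $id$ with a displacement-type energy estimate for $C^0$-small compactly supported contactomorphisms.

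Once local boundedness is in hand, the argument concludes formally. Extend $\gamma_W$ to $\widetilde{\gamma_W}\colon\ContocH(W\times S^1)\to\Z$ via the same $\liminf$ construction preceding Theorem~\ref{Gamma_Homeo}; local boundedness makes $\widetilde{\gamma_W}$ finite, and it inherits conjugation invariance and $\tau_{C^0}$-lower semicontinuity by construction, as well as unboundedness from $\gamma_W$. The general argument outlined in the first paragraph, applied to $F=\widetilde{\gamma_W}$, then shows that $\ContocH(W\times S^1)$ admits no element with $\tau_{C^0}$-dense conjugacy class.
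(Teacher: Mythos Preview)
Your approach is genuinely different from the paper's, and in fact the paper explicitly flags your route as desirable but currently incomplete. The paper's proof is far more elementary: it simply repeats the non-squeezing argument from the second half of Theorem~\ref{thm: Rokhlin property of the contactomorphisms of the ball}. Since $SH(W)=0$, the spectral capacity $c_{sp}(\mathring{W}_0)$ is finite by \cite{BK22}, so one normalizes $U=\frac{1}{c_{sp}(\mathring{W}_0)}\mathring{W}_0$ to have $c_{sp}(U)=1$ and invokes the Albers--Merry non-squeezing (Theorem~\ref{Thm:Non-squeezing_AM}) in place of Theorem~\ref{Thm:Non-squeezing_Original}. Given any putative Rokhlin element $g$ supported in some $(rU)\times S^1$, one takes $f$ moving every point of a larger $\overline{(RU)}\times S^1$; non-squeezing forces every conjugate of $g$ to fix a point there, so $f\notin\conjH(g)$. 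No spectral norm is constructed or needed.

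Your proposal has a genuine gap, and you have mislocated it. You identify the $C^0$-local boundedness of the Rabinowitz--Floer spectral invariant as ``the principal technical step'' and ``the main obstacle'', but this is exactly what the paper \emph{does} prove in Theorem~\ref{Thm: AM_Continuity}, via the spectrum control of Proposition~\ref{Thm:C0-continuity}. The real gap is the step you glide over when you write ``by the triangle inequality this reduces to\ldots'': the triangle inequality for $\gamma_{AM}$ is \emph{not known}. The paper says so explicitly in Section~\ref{Sec: Prequantization spaces} and Section~\ref{Section: Albers_Merry_Basics}, tracing the difficulty to product structures in Rabinowitz Floer homology, and for this reason formulates the stronger conclusion your method would yield only as Conjecture~\ref{Conjecture_Preq}. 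Without the triangle inequality you cannot upgrade boundedness of $\gamma_W$ near the identity to the two-point estimate $|\gamma_W(\phi)-\gamma_W(\psi)|\le C$, and you cannot run the lower-semicontinuity argument of Proposition~\ref{Lower_Semi_Gamma} (which relies on item~3 of Proposition~\ref{prop: C_integer}). Both are needed to ensure that the $\liminf$ extension $\widetilde{\gamma_W}$ remains unbounded, without which your final contradiction does not fire.
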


The proof of Theorem \ref{thm:Rok_Preq} is a minor modification of the proof of the second part of Theorem \ref{thm: Rokhlin property of the contactomorphisms of the ball}, relaying on the non-squeezing result, Theorem \ref{Thm:Non-squeezing_AM}, proven by Albers and Merry in \cite{AM18}.  The $SH(W)=0$ condition is used to guarantee finiteness of the spectral symplectic capacities of the domains in $W$, following \cite{BK22}. 

Building on the same idea, only using a different non-squeezing result from \cite{AM18}, we may prove another theorem in the same spirit. Namely,  let $m\geq 1$ and equip $W \times \R^{2m} \times S^1 $ with the contact form $ \lambda- \sum_{i=1}^n y_i dx_i + d\theta$ and with the sum metric. Denote by $c_{HZ}$ the Hofer-Zehnder capacity of subsets in $W \times \R^{2m} .$

\begin{thm}\label{thm:Roklin_Stabilization}
Assume that $W$ is a completion of a Liouville domain $W_0$ such that $c_{HZ}(W_0)$ is finite.  Then $(\ContocH(W \times \R^{2m} \times S^1), \tau_{C^0})$ does not have Rokhlin property.
\end{thm}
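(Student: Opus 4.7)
The plan is to mimic the proof of the second part of Theorem~\ref{thm: Rokhlin property of the contactomorphisms of the ball} and of its generalization Theorem~\ref{thm:Rok_Preq}, substituting the relevant non-squeezing input. As the paper announces, a different non-squeezing statement from~\cite{AM18}, depending on the finiteness of the Hofer--Zehnder capacity of $W_0$ rather than on the vanishing of $SH(W)$, is the driving ingredient here.

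First I would state precisely the Albers--Merry non-squeezing in $W\times\R^{2m}\times S^1$ that follows from $c_{HZ}(W_0)<\infty$. Informally, it produces a family of precompact open sets in $W\times\R^{2m}\times S^1$ whose ``contact size'' is unbounded and which cannot be squeezed into domains of small contact size. From this one distils a numerical invariant
$\mu:\Contoc(W\times\R^{2m}\times S^1)\to[0,+\infty)$
which is conjugation-invariant, $\tau_{C^0}$-locally bounded, and unbounded; a natural candidate is the spectral norm associated to Albers--Merry's Rabinowitz--Floer spectral invariants, constructed in direct analogy with Sandon's $\gamma$.

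Next I would extend $\mu$ to the closure $\ContocH(W\times\R^{2m}\times S^1)$ via the limit-inferior procedure used to pass from $\gamma$ to $\widetilde{\gamma}$ in Section~\ref{Sec:C0Sandon}, setting
\[
\widetilde{\mu}(\phi)=\max_{U\ni\phi}\min_{\psi\in U'}\mu(\psi),
\]
where $U$ ranges over $\tau_{C^0}$-open neighbourhoods of $\phi$ in $\ContocH(W\times\R^{2m}\times S^1)$ and $U'=U\cap\Contoc(W\times\R^{2m}\times S^1)$. Local boundedness of $\mu$ ensures that $\widetilde{\mu}$ is everywhere finite and inherits conjugation invariance, $\tau_{C^0}$-lower semicontinuity and unboundedness, just as in Theorem~\ref{Gamma_Homeo}. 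If the Rokhlin property held, so that $\conj(g)$ were $\tau_{C^0}$-dense for some $g$, local boundedness of $\widetilde{\mu}$ combined with conjugation invariance would force $\widetilde{\mu}$ to be globally bounded on $\ContocH(W\times\R^{2m}\times S^1)$, exactly as in the argument using $\widetilde{\gamma}$ given immediately after Theorem~\ref{Gamma_Homeo}. This would contradict the unboundedness of $\widetilde{\mu}$.

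The main obstacle I foresee is the construction of $\mu$, and in particular verifying that it is $C^0$-locally bounded. In the flat setting of Theorem~\ref{Gamma_Is_Locally_Bounded} this rests on explicit generating-function estimates for translated points; in the present Liouville setting one must instead extract the corresponding local estimate from Albers--Merry's Rabinowitz--Floer framework, the crucial input being that $c_{HZ}(W_0)<\infty$ is precisely what controls the relevant spectral values uniformly on a $\tau_{C^0}$-neighbourhood of the identity. Once $\mu$ is in hand with the three required properties, the contradiction with Rokhlin is identical to the one obtained in Section~\ref{Section:Intro_Rokhlin} for $\R^{2n}\times S^1$.
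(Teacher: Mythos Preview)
Your proposal has a genuine gap: the spectral-norm route you outline is not known to work, and the paper explicitly flags why. The Albers--Merry invariants $c_{AM}$ are defined on $\ContocU(W\times S^1)$, and the paper notes (see the footnote in Section~\ref{Sec: Prequantization spaces} and the discussion after Theorem~\ref{Thm: AM_Continuity}) that the triangle inequality for the associated $\gamma_{AM}$ is \emph{open}, owing to difficulties with product structures in Rabinowitz Floer homology. Without the triangle inequality, local boundedness near $id$ does not propagate to local boundedness near an arbitrary $f$ --- the implication ``bounded on a neighbourhood of $id$ $\Rightarrow$ bounded on a neighbourhood of every element'' uses $|\mu(f)-\mu(\psi)|\le\mu(f\psi^{-1})$ --- so the Rokhlin contradiction you sketch does not go through. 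There is also no construction in \cite{AM18} of spectral invariants on $W\times\R^{2m}\times S^1$; Theorem~\ref{Thm:Non-squeezing_Stabilization} is a non-squeezing statement, not a spectral one. In short, you are proposing to build precisely the object whose existence is the content of Conjecture~\ref{Conjecture_Preq} (and a stabilized version thereof).

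The paper's proof is far more elementary and bypasses spectral invariants entirely. It repeats verbatim the fixed-point argument from the proof of the second part of Theorem~\ref{thm: Rokhlin property of the contactomorphisms of the ball}: assume $\conj(g)$ is dense, choose $R,r_0$ with $\supp(g)\subset (R\mathring{W}_0)\times B(r_0)\times S^1$, enlarge $R$ so that Theorem~\ref{Thm:Non-squeezing_Stabilization} applies with $U=R\mathring{W}_0$, and pick $r_1$ large enough that $(R\mathring{W}_0)\times B(r_1)\times S^1$ cannot be squeezed into $(R\mathring{W}_0)\times B(r_0)\times S^1$. Then any $f$ that moves every point of $(RW_0)\times\overline{B}(r_1)\times S^1$ cannot lie in $\conjH(g)$: every conjugate $hgh^{-1}$ must fix some point of $(R\mathring{W}_0)\times B(r_1)\times S^1$ (else $h^{-1}$ would squeeze it into $\supp(g)$), and a $C^0$-limit of such conjugates inherits a fixed point in the closure. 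No norm, no extension to $\ContocH$, no lower semicontinuity --- just non-squeezing plus the support identity $\supp(hgh^{-1})=h(\supp(g))$.
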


\begin{rem}
It was shown in \cite{Mailhot25}, that the vanishing of symplectic homology is in fact equivalent to the finiteness of the spectral symplectic capacity. Finiteness of the Hofer-Zehnder capacity is a strictly weaker condition. Indeed, $SH(W)=0$ implies that $c_{HZ}(W_0)$ is finite by \cite[Theorem 1.3]{BK22} and \cite[Corollary 8.3]{FS07}. On the other hand \cite{Irie14} provides examples of unit codisc bundles (whose symplectic homology is never vanishing) with the finite Hofer-Zehnder capacity. 
\end{rem}

Using Rabinowitz Floer homology, Albers and Merry defined spectral invariants $c_{AM}:\ContocU(W\times S^1)\to \R$ on the universal cover of $\Contoc(W\times S^1).$ While these invariants share a lot of properties with Sandon's spectral invariants, \cite{AM18} does not define\footnote{To the best of our understand this is due to difficulties related to triangle inequality stemming from the difficulties related to product structures in Rabinowitz Floer homology.} the spectral norm on $\Contoc(W\times S^1).$ The next theorem shows $C^0$-continuity of $c_{AM}.$

\begin{thm}\label{Thm: AM_Continuity}
Let $\widetilde{\phi}\in \ContocU(W\times S^1)$ and $\phi=\widetilde{\phi}_1 \in \Contoc(W\times S^1).$ If $\| \phi \|_{C^0}<\frac{1}{2}$ then $|c_{AM}(\widetilde{\phi})| \leq \| \phi \|_{C^0}.$
\end{thm}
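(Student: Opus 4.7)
The proof I would write combines the spectrality property of Albers--Merry's spectral invariant with an elementary geometric computation on the prequantization.

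First, I would exploit the explicit form of the Reeb flow on $(W\times S^1,\lambda+d\theta)$. The Reeb vector field is $\partial_\theta$, so its flow is the pure rotation $\theta_\tau(w,z)=(w,z+\tau)$; it fixes the $W$--factor and rotates the $S^1$--factor. Consequently, any translated point $x=(w,z)\in W\times S^1$ of $\phi$ with translation $\tau\in\R$ satisfies
$$d(\phi(x),x)=d(\theta_\tau(x),x)=d_{S^1}(z+\tau,z)=\|\tau\|_{S^1},$$
where $\|\tau\|_{S^1}:=\min_{k\in\Z}|\tau-k|$ and where I used that the metric on $W\times S^1$ is a product. Taking the supremum over translated points, every translation of a translated point of $\phi$ satisfies $\|\tau\|_{S^1}\le\|\phi\|_{C^0}$.

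Next, I would invoke the spectrality of $c_{AM}$ proved in \cite{AM18}: for generic $\phi$, the value $c_{AM}(\widetilde{\phi})$ is the Rabinowitz action of a critical point and, via the standard dictionary between Rabinowitz critical points and translated points, equals a real lift of some translation number $\tau$ of $\phi$. Different lifts of $\phi$ give values differing by integers (generated by the Reeb loop). For non-generic $\phi$, I would approximate by generic ones using the Hofer-continuity of $c_{AM}$ also proved in \cite{AM18}. Combined with the previous step, $c_{AM}(\widetilde{\phi})$ lies within $\|\phi\|_{C^0}<\tfrac{1}{2}$ of some integer $k\in\Z$, which is then uniquely determined.

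Finally, I would identify this integer with $0$ via a homotopy argument. The plan is to connect $\widetilde{id}$ to $\widetilde{\phi}$ by a smooth path $\widetilde{\phi}_s$ in $\ContocU(W\times S^1)$ whose endpoints $\phi_s=(\widetilde{\phi}_s)_1$ satisfy $\|\phi_s\|_{C^0}<\tfrac{1}{2}$ throughout. Along such a path, the Hofer-continuous function $s\mapsto c_{AM}(\widetilde{\phi}_s)$ is constrained at each $s$ to lie within $\|\phi_s\|_{C^0}$ of some integer, so its image is contained in $\bigcup_{k\in\Z}(k-\tfrac{1}{2},k+\tfrac{1}{2})$. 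Since $c_{AM}(\widetilde{id})=0$, continuity forces the function to remain in the connected component $(-\tfrac{1}{2},\tfrac{1}{2})$. Hence the integer $k$ equals $0$, so $|c_{AM}(\widetilde{\phi})|=\|\tau\|_{S^1}\le\|\phi\|_{C^0}$.

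The step I expect to be the main obstacle is the construction of the path $\widetilde{\phi}_s$ that stays $C^0$-small throughout. This requires showing that the relevant $C^0$-ball inside $\Contoc(W\times S^1)$ is accessible from the identity via a smooth path of lifts that itself remains $C^0$-small; a fragmentation argument, or a direct construction via contact Hamiltonians supported in a small neighborhood of the graph of $\phi$, should do the job, possibly after slightly shrinking the radius. A cleaner alternative would be to bypass the homotopy step and extract a direct action estimate from a specific minimax representative of $c_{AM}(\widetilde{\phi})$, showing that its action is bounded by $\|\phi\|_{C^0}$ without needing to fix the lift ambiguity separately; this would mirror the pattern of the $C^0$-bounds obtained for $\gamma$ in Theorem \ref{Gamma_Is_Locally_Bounded}.
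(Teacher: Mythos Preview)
Your first two steps are fine, and you correctly put your finger on the gap: the homotopy argument in the third step is not obviously salvageable. The theorem is stated on the universal cover, so it must hold for \emph{every} lift $\widetilde{\phi}$ of a $C^0$-small $\phi$. Your proposed path $\widetilde{\phi}_s$ has to project to a path in the prescribed homotopy class $\widetilde{\phi}$; if that class is ``large'' there is no reason a representative with $\|\phi_s\|_{C^0}<\tfrac12$ for all $s$ exists, and neither fragmentation nor a Hamiltonian supported near the graph gives you control over which homotopy class you land in. So as written this step is a genuine obstruction, not just a technicality.

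The paper bypasses the homotopy in the group entirely and replaces it by a connectedness argument on the manifold. The key is to define, for any path $\{\phi_t\}$ from $id$ to $\phi$, the function $F(x)=\int_0^1 d\theta(\dot{\phi}_t(x))\,dt$ on all of $W\times S^1$, not just on translated points. Two facts then finish the proof. First (Lemma~\ref{Lemma:TP_Action} and Lemma~\ref{Lemma:Actions_vs_Actions}), a Stokes/contractibility argument shows that $F(z)$ at a translated point is path-independent and that $c_{AM}(\widetilde{\phi})$ always equals $F(z)$ for some translated point $z$; this already kills the integer ambiguity with no reference to the lift. Second (Proposition~\ref{Thm:C0-continuity}), $F$ is continuous, vanishes outside the support, and satisfies $\pi(\phi(x))=\pi(x)+F(x)\bmod 1$; connectedness of $W\times S^1$ then forces $|F|<\tfrac12$ everywhere (otherwise some $y$ would have $|F(y)|=\tfrac12$, making $\pi(y)$ and $\pi(\phi(y))$ antipodal and contradicting $\|\phi\|_{C^0}<\tfrac12$), and once $|F(z)|<\tfrac12$ you read off $|F(z)|=d_{S^1}(\pi(z),\pi(\phi(z)))\le\|\phi\|_{C^0}$. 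This is exactly the ``cleaner alternative'' you gestured at, but the mechanism is an intermediate-value argument on $W\times S^1$ rather than a minimax estimate.
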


Let us mention that our proof of Theorem \ref{Gamma_Is_Locally_Bounded} goes through and analogue of Theorem \ref{Thm: AM_Continuity} for Sandon's spectral invariants, see Section \ref{Section:Gamma_Proof}. Moreover, a natural candidate for the spectral norm $\gamma_{AM}:\Contoc(W\times S^1)\to \Z$ can be defined, see Section \ref{Section: Albers_Merry_Basics}, and Theorem \ref{Thm: AM_Continuity} implies that $\gamma_{AM}$ is $C^0$-locally bounded. If one was to show that $\gamma_{AM}$ is in fact a conjugation-invariant norm on $\Contoc(W\times S^1)$, the results from Sections \ref{Section:Intro_Rokhlin} - \ref{Section:Intro_Classes} would readily generalize. To this end, we put forward the following conjecture:

\begin{conjecture}\label{Conjecture_Preq}
For $(\ContocH(W \times S^1), \tau_{C^0})$, $d_{cc}$ is unbounded.
\end{conjecture}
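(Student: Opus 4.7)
The plan is to imitate the strategy underlying Theorem \ref{thm: Rokhlin property of the contactomorphisms of B2nS1}, substituting the Albers--Merry invariants $c_{AM}$ for Sandon's $\gamma$. Reading the argument of Section \ref{Sec:Proof_cc} abstractly, what is really needed is a single structural input: a conjugation-invariant functional on $(\ContocH(W\times S^1), \tau_{C^0})$ satisfying a Lipschitz-type estimate as in Theorem \ref{Gamma_Is_Locally_Bounded} and taking arbitrarily large values. Indeed, given such a functional $N$ with $|N(\phi) - N(\psi)| \leq C$ whenever $d_{C^0}(\phi,\psi) < r$, conjugation invariance propagates the estimate along any $k$-conjugation chain: if $\phi_i, \phi_{i+1} \in \conjH(\psi_i)$ one picks separate conjugates of $\psi_i$ within $r$ of $\phi_i$ and of $\phi_{i+1}$, and conjugation invariance combined with the Lipschitz bound forces $|N(\phi_i) - N(\phi_{i+1})| \leq 2C$. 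Iterating over a chain of length $k$ starting at $id$ gives a universal upper bound $2Ck + |N(id)|$ on $N$, incompatible with unboundedness.

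The first step is therefore to promote $c_{AM}$ to a genuine conjugation-invariant norm $\gamma_{AM} : \Contoc(W \times S^1) \to \mathbb{R}$, as indicated in Section \ref{Section: Albers_Merry_Basics}. The natural candidate is a symmetrized combination such as $\gamma_{AM}(\phi) = c_{AM}(\widetilde{\phi}) + c_{AM}(\widetilde{\phi}^{-1})$, or a ceiling thereof to retain integrality as for $\gamma$. One must verify independence of the lift $\widetilde{\phi} \in \ContocU(W\times S^1)$, non-degeneracy, conjugation invariance, and, most delicately, the triangle inequality. As flagged in the text preceding the conjecture, this is the central outstanding obstacle, tied to the absence of a clean product structure on Rabinowitz Floer homology.

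Granting this first step, the remaining work is essentially formal. Theorem \ref{Thm: AM_Continuity} applied to $\phi \psi^{-1}$, combined with the triangle inequality for $\gamma_{AM}$, yields the analogue of Theorem \ref{Gamma_Is_Locally_Bounded}: $|\gamma_{AM}(\phi) - \gamma_{AM}(\psi)| \leq 2$ whenever $d_{C^0}(\phi,\psi) < \frac{1}{2}$. The limit-inferior construction $\widetilde{\gamma}_{AM}(\phi) = \max_U \min_{\psi \in U'} \gamma_{AM}(\psi)$ then extends $\gamma_{AM}$ to a $C^0$-locally bounded, lower semicontinuous, conjugation-invariant norm on $\ContocH(W \times S^1)$, reproducing Theorem \ref{Gamma_Homeo} verbatim. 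Unboundedness of $\gamma_{AM}$ should follow by evaluating $c_{AM}$ on Reeb-generated iterates in the $S^1$-direction, where the action calculations of \cite{AM18} yield linear growth; the non-squeezing input already invoked in Theorems \ref{thm:Rok_Preq} and \ref{thm:Roklin_Stabilization} plays the same geometric role as \cite{EKP06} does for Sandon's $\gamma$. Feeding $\widetilde{\gamma}_{AM}$ into the abstract scheme of the first paragraph then completes the proof.

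The principal and essentially only obstacle is thus the construction of $\gamma_{AM}$ as a conjugation-invariant norm, and specifically the triangle inequality. A plausible approach is to adapt the product-on-spectral-invariants techniques of \cite{Sch00,Oh05} to the Rabinowitz setting via the action filtration of \cite{AM18}; alternatively, a weakened subadditivity of the form $\gamma_{AM}(\phi\psi) \leq \gamma_{AM}(\phi) + \gamma_{AM}(\psi) + C_0$ would still suffice to run the above scheme after trivially adjusting constants.
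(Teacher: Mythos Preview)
The statement you are attempting to prove is labelled a \emph{Conjecture} in the paper, and the paper does not supply a proof. Immediately before stating it, the authors explain that the natural candidate $\gamma_{AM}$ is $C^0$-locally bounded by Theorem~\ref{Thm: AM_Continuity}, and that ``if one was to show that $\gamma_{AM}$ is in fact a conjugation-invariant norm on $\Contoc(W\times S^1)$, the results from Sections \ref{Section:Intro_Rokhlin}--\ref{Section:Intro_Classes} would readily generalize.'' Your proposal reproduces exactly this conditional reasoning and correctly isolates the triangle inequality for $\gamma_{AM}$ (equivalently, a product structure in Rabinowitz Floer homology) as the single missing ingredient. In other words, your write-up is not a proof but a faithful restatement of the strategy the paper already sketches, together with the same acknowledged gap; there is nothing further to compare.
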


\begin{rem}
In this paper, we only considered the continuity of spectral invariants in the case of non-compact contact manifolds. In the case of closed contact manifolds, a number of different constructions of Floer-theoretic spectral invariants which satisfy triangle inequality, have recently appeared in \cite{DUZ23,Cant24,ASZ25}.  It would be interesting to explore the $C^0$-continuity of these invariants.
\end{rem}

Lastly, we note that $W\times \R$ can also be equipped with a contact form given by $\alpha =dz+ \lambda$, $z$ being a coordinate on $\R.$ Now, for $W=\R^{2n}$ we recover $(\R^{2n+1},\xi_0)$ discussed in Theorem \ref{thm: Rokhlin property of the contactomorphisms of the ball}. In analogy with this result, we ask the following question:

\begin{question}
For which exact $W$ is $d_{cc}$ bounded for $(\ContocH(W \times \R), \tau_{C^0})$?
\end{question}

\subsection*{Acknowledgements}

We are very grateful to Marco Mazzucchelli and Sobhan Seyfaddini for suggesting this project to us, as well as for a number of fruitful ideas and very useful discussion in the early stages. Their help has been truly indispensable. We thank Vincent Humili\`{e}re and Leonid Polterovich for illuminating comments and Frol Zapolsky for discussions related to \cite{Zapol13}. We also thank participants of the symplectic seminars in Belgrade and Montr\'{e}al for numerous questions which helped us improve exposition. During the writing of the article, B.S. was supported by the ERC Starting Grant 851701 and V.S. was supported by the ERC Starting Grant 851701, CRM-ISM postdoctoral fellowship and Fondation Courtois.


\section{Preliminaries}

\subsection{Different topologies on homeomorphism groups}\label{Section_Maps_Topologies}

We review some facts about different topologies on the spaces of continuous maps, which will be needed in the paper. Most of the material in this section is standard and treated in detail in \cite[Chapter 7]{Mun00}.  We provide proofs for the statements we were not able to locate in the literature.

Let $X$ be a Hausdorff topological space, $(Y,d)$ a metric space and $C(X,Y)$ the space of continuous maps from $X$ to $Y.$ The {\it uniform} or {\it $C^0$-distance}\footnote{To be precise,  $d_{C^0}$ is an extended distance, i.e. it may take value $+\infty.$ This is rather irrelevant since topology only depends on arbitrarily small balls. Moreover, in the rest of the paper $d_{C^0}$ will always be considered on compactly supported homeomorphisms and will thus only take finite values. } between $f,g \in C(X,Y)$ is given by
$$d_{C^0}(f,g)= \sup_{x\in X} d(f(x),g(x)).$$
The topology on $C(X,Y)$ induced by $d_{C^0}$ is called the {\it uniform} or {\it $C^0$-topology} and is denoted by $\tau_{C^0}.$ 

A different topology on $C(X,Y)$ is defined as follows. For $f\in C(X,Y)$, $K\subset X$ compact and $\varepsilon>0$, denote by
$$\mathcal{U}_{f,K,\varepsilon} = \{g \in C(X,Y)~|~ d_{C^0}(f|_K, g|_K)<\varepsilon \}.$$
The collection of all such $\mathcal{U}_{f,K,\varepsilon}$ is a basis of the {\it compact-open topology} on $C(X,Y).$ We denote this topology by $\tau_{CO}.$

Lastly, for $f\in C(X,Y)$ and a continuous function $\delta:X\to (0,+\infty)$ let
$$\mathcal{V}_{f,\delta} = \{g \in C(X,Y)~|~ (\forall x \in X) ~ d(f(x), g(x))<\delta(x) \}.$$
The collection of all such $\mathcal{V}_{f,\delta}$ is a basis of the  {\it strong} or {\it fine topology} on $C(X,Y).$ We denote this topology by $\tau_{s}.$ It is clear from the definitions that
$$\tau_{CO} \subset \tau_{C^0} \subset \tau_{s}.$$

When $X$ is compact, all three of the above topologies coincide. This is no longer true in the cases of interest for us. We will use the following criteria for convergence in different topologies.

\begin{lemma}\label{Lemma:CO_Convergence}
A sequence $f_i\in C(X,Y), i\geq 1$ converges to $f\in C(X,Y)$ in $\tau_{CO}$ if and only if for every compact $K\subset X$, $f_i|_K$ converges to $f|_K$ uniformly. 
\end{lemma}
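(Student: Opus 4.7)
The plan is to unwind the definition of $\tau_{CO}$ and use the triangle inequality in the sup metric on compact subsets. The key preliminary observation I would make is that the subcollection
$$\{ \mathcal{U}_{f,K,\varepsilon} ~|~ K\subset X \text{ compact}, ~ \varepsilon > 0 \}$$
(with first slot precisely $f$) is already a neighborhood basis at $f$ in $\tau_{CO}$. Indeed, if $f \in \mathcal{U}_{g,K,\varepsilon}$ for some other $g$, then $\varepsilon' := \varepsilon - d_{C^0}(f|_K, g|_K) > 0$, and the triangle inequality for $d_{C^0}(\,\cdot|_K, \,\cdot|_K)$ gives the inclusion $\mathcal{U}_{f,K,\varepsilon'} \subset \mathcal{U}_{g,K,\varepsilon}$. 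Once this is in hand, convergence $f_i \to f$ in $\tau_{CO}$ reduces to the condition that $f_i$ is eventually inside each $\mathcal{U}_{f,K,\varepsilon}$.

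For the forward direction, I would fix $K \subset X$ compact and $\varepsilon > 0$, note that $\mathcal{U}_{f,K,\varepsilon}$ is a basic open neighborhood of $f$ (since $d_{C^0}(f|_K, f|_K) = 0$), and use the assumed convergence to get $f_i \in \mathcal{U}_{f,K,\varepsilon}$ for all sufficiently large $i$. Unpacking the definition of $\mathcal{U}_{f,K,\varepsilon}$ gives $\sup_{x \in K} d(f(x), f_i(x)) < \varepsilon$ eventually, which is exactly uniform convergence of $f_i|_K$ to $f|_K$.

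For the reverse direction, I would take an arbitrary open neighborhood $U$ of $f$ in $\tau_{CO}$, apply the neighborhood-basis observation above to obtain $K$ and $\varepsilon$ with $\mathcal{U}_{f,K,\varepsilon} \subset U$, and invoke the hypothesis of uniform convergence on $K$ to conclude $f_i \in \mathcal{U}_{f,K,\varepsilon} \subset U$ for all large $i$.

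I do not expect any real obstacle here; the only step requiring any care is the verification that the sets $\mathcal{U}_{f,K,\varepsilon}$ with first slot equal to $f$ form a neighborhood basis, which is a one-line triangle inequality argument. The result is essentially a direct translation of the definitions once that observation is made.
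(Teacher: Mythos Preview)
Your argument is correct. The paper does not actually supply a proof of this lemma; it is listed among the standard facts for which the authors refer to \cite[Chapter~7]{Mun00}, so there is nothing to compare against beyond noting that your write-up is a clean unwinding of the definitions and would serve perfectly well as the omitted proof.
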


\begin{lemma}\label{Lemma:Strong_Convergence}
Let $f_i\in C(X,Y), i\geq 1$ be a sequence. Assume that there exists a compact $K\subset X$ such that all $f_i$ coincide outside of $K$, and assume that $f_i$ converges to $f$ in $\tau_{C^0}.$ Then $f_i$ converges to $f$ in $\tau_{s}$ as well. 
\end{lemma}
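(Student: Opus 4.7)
The plan is to show directly that every basic neighbourhood $\mathcal{V}_{f,\delta}$ of $f$ in the strong topology $\tau_s$ eventually contains the tail of $(f_i)$. Such a neighbourhood is determined by a continuous positive function $\delta:X\to(0,+\infty)$, so what needs to be verified is that for each such $\delta$ there is an index $N$ with $d(f_i(x),f(x))<\delta(x)$ simultaneously for all $x\in X$ and all $i\geq N$.

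The first step is to pin down the behaviour of $f$ outside the compact set $K$. By hypothesis the maps $f_i$ agree pairwise on $X\setminus K$, so there is a single map $g:X\setminus K\to Y$ with $f_i|_{X\setminus K}=g$ for every $i$. Since $f_i\to f$ in $\tau_{C^0}$, pointwise evaluation at any $x\notin K$ gives $f(x)=\lim_i f_i(x)=g(x)$. Consequently $d(f_i(x),f(x))=0$ for all $x\in X\setminus K$ and all $i$, which is automatically below $\delta(x)$.

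The second step handles the compact piece. Since $\delta$ is continuous and strictly positive on the compact set $K$, it attains a positive minimum $\delta_0=\min_{x\in K}\delta(x)>0$. Applying the definition of $\tau_{C^0}$-convergence to the threshold $\delta_0$ yields $N\in\N$ such that $\sup_{x\in X}d(f_i(x),f(x))<\delta_0$ for all $i\geq N$. For $x\in K$ this gives $d(f_i(x),f(x))<\delta_0\leq\delta(x)$, and combined with the previous paragraph this shows $f_i\in\mathcal{V}_{f,\delta}$ for every $i\geq N$, as required.

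I do not anticipate a real obstacle here; the only substantive observation is that the common off-$K$ value of the $f_i$ is forced to coincide with $f$, after which strong convergence reduces cleanly to uniform convergence on a compact set on which a positive continuous function has a positive minimum.
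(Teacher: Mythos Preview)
Your proof is correct and follows the same approach as the paper's: reduce to the positive minimum $\min_K\delta>0$ of $\delta$ on the compact $K$, then invoke uniform convergence. The only difference is that you make explicit the step that $f_i=f$ on $X\setminus K$ (which the paper leaves implicit but needs, since $\delta$ may be smaller than $\min_K\delta$ outside $K$).
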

\begin{proof}
We show that for every basis element $\mathcal{V}_{f,\delta}$, there exists $i_0$ such that $f_i\in \mathcal{V}_{f,\delta}$ for all $i\geq i_0.$ Indeed, since $\delta>0$ is continuous, we have that $\min_K \delta>0.$ Taking $i_0$ such that $d_{C^0}(f_i,f)<\min_K \delta$ for all $i\geq i_0$ finishes the proof.
\end{proof}

Let us mention that compact-open and strong topologies do not depend on the metric on $Y,$ but only on the underlying topology. This is not the case with the $C^0$-topology, as can be seen from elementary examples.

Throughout the paper, we will need the following auxiliary lemma.

\begin{lemma}\label{Lemma:Top_Group}
Let $(X,d)$ be a metric space. Then $(\Homeo_c(X),\tau_{C^0})$ is a topological group.
\end{lemma}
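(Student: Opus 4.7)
The plan is to verify the two defining axioms of a topological group---continuity of composition $(f,g)\mapsto f\circ g$ and of inversion $f\mapsto f^{-1}$ with respect to $\tau_{C^0}$. Both will reduce to a single technical ingredient, which I would isolate as a preliminary claim: every $f\in \Homeo_c(X)$ is uniformly continuous on $X$, and the same is true of $f^{-1}$, which has the same compact support $\supp(f^{-1})=\supp(f)$ (since $f(x)=x\iff f^{-1}(x)=x$).

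Granting this, the group operations are handled by routine triangle-inequality estimates. For composition, if $f_n\to f$ and $g_n\to g$ in $\tau_{C^0}$, then
\[
d(f_ng_n(x), fg(x)) \le d(f_ng_n(x), fg_n(x)) + d(fg_n(x), fg(x)) \le d_{C^0}(f_n,f) + \omega_f\bigl(d_{C^0}(g_n,g)\bigr),
\]
where $\omega_f$ is a modulus of uniform continuity of $f$; taking the supremum in $x$ shows $f_ng_n\to fg$ in $\tau_{C^0}$. For inversion, the identity $f_n(f_n^{-1}(x))=x$ gives
\[
d\bigl(f(f_n^{-1}(x)),\, x\bigr) = d\bigl(f(f_n^{-1}(x)),\, f_n(f_n^{-1}(x))\bigr) \le d_{C^0}(f_n,f),
\]
and then applying the modulus of uniform continuity of $f^{-1}$ to $d(f_n^{-1}(x),f^{-1}(x)) = d\bigl(f^{-1}(f(f_n^{-1}(x))),\,f^{-1}(x)\bigr)$ yields $d_{C^0}(f_n^{-1},f^{-1})\to 0$.

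The main obstacle is the uniform continuity claim, because on a general metric space continuity at each point does not imply uniform continuity. I would argue by contradiction. Set $K=\supp(f)$ (compact) and $U=\{x:f(x)\ne x\}$ (open, with $\overline{U}=K$), so that $f$ equals the identity on $X\setminus U$, and in particular on $\partial K\subseteq K\setminus U$ (since any point of $U$ is interior to $K$, so $\partial K\cap U=\emptyset$). Suppose sequences $x_n,y_n\in X$ satisfy $d(x_n,y_n)\to 0$ but $d(f(x_n),f(y_n))\ge \varepsilon>0$. If both eventually lie outside $U$, then $d(f(x_n),f(y_n))=d(x_n,y_n)\to 0$, contradiction. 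Otherwise, after relabeling and passing to a subsequence we may assume $x_n\in K$; compactness of $K$ yields $x_n\to z\in K$, whence $y_n\to z$. If $y_n\in K$ infinitely often, continuity of $f$ at $z$ forces $f(x_n),f(y_n)\to f(z)$, contradicting the lower bound. If instead $y_n\notin K$ infinitely often, then $z\in K\cap \overline{X\setminus K}=\partial K$, so $f(z)=z$, while $f(y_n)=y_n\to z=f(z)$ and $f(x_n)\to f(z)$, contradicting the lower bound once more. This establishes uniform continuity and completes the plan.
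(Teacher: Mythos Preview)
Your proof is correct and follows essentially the same approach as the paper: both establish continuity of composition via the triangle inequality together with uniform continuity of a compactly supported homeomorphism, and the paper simply asserts this uniform continuity while you supply a careful proof of it. The only notable difference is in the inversion step: the paper observes that $d_{C^0}(\phi^{-1},\phi_n^{-1})=d_{C^0}(\phi^{-1}\circ\phi_n,\mathrm{id})$ and then invokes the already-proven continuity of composition, whereas you argue directly using the modulus of uniform continuity of $f^{-1}$; both are valid and equally short.
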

\begin{proof}
Let $\{ \phi_n \}, \{ \psi_n \}, n\geq 1$ be two sequences in $\Homeo_c(X)$ such that $\phi_n \to \phi$ and $\psi_n \to \psi$ in $\tau_{C^0}.$ Triangle inequality gives us
$$d_{C^0}(\phi\circ \psi, \phi_n\circ \psi_n)\leq d_{C^0}(\phi\circ \psi, \phi\circ \psi_n) + d_{C^0}(\phi\circ \psi_n, \phi_n\circ \psi_n).$$
Since $\phi$ is compactly supported it is uniformly continuous and thus $d_{C^0}(\phi\circ \psi, \phi\circ \psi_n)\to 0$ as $n\to \infty.$ On the other hand $d_{C^0}(\phi\circ \psi_n, \phi_n\circ \psi_n)=d_{C^0}(\phi, \phi_n)\to 0$ as $n\to \infty$, which proves that composition is continuous.

To prove that taking inverses is continuous we observe that
$$d_{C^0}(\phi^{-1}, \phi_n^{-1})=d_{C^0}(\phi^{-1} \circ \phi_n, id)$$
and the proof follows from the continuity of the composition.
\end{proof}

Lastly, we prove the following proposition.

\begin{prop}\label{Prop:separable}
 $(\Contoc(\R^{2n+1}), \tau_{C^0})$ is separable.
\end{prop}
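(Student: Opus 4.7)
The plan is to realize $(\Contoc(\R^{2n+1}), \tau_{C^0})$ as a subspace of a separable metric space, since every subspace of a separable metric space is itself separable (via the fact that separable metric implies second-countable, and second-countability passes to subspaces).

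First I would exhaust $\Contoc(\R^{2n+1})$ by sets with controlled support. For each integer $N\geq 1$, let
\[
G_N = \{\phi \in \Contoc(\R^{2n+1}) ~|~ \supp(\phi)\subset \overline{B(N)}\},
\]
where $B(N)$ denotes the open Euclidean ball of radius $N$ in $\R^{2n+1}$. Then $\Contoc(\R^{2n+1}) = \bigcup_{N\geq 1} G_N$. Since a countable union of separable subspaces of a metric space is separable, it suffices to check that each $G_N$ is separable in the $\tau_{C^0}$-subspace topology.

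Next I would use the restriction map
\[
r_N : G_N \to C(\overline{B(N)},\R^{2n+1}), \quad \phi \mapsto \phi|_{\overline{B(N)}}.
\]
This map is injective because each $\phi \in G_N$ is the identity outside $\overline{B(N)}$. Moreover, for $\phi,\psi \in G_N$, both maps agree off $\overline{B(N)}$, so
\[
d_{C^0}(\phi,\psi) = \sup_{x\in \R^{2n+1}} d(\phi(x),\psi(x)) = \sup_{x\in \overline{B(N)}} d(\phi(x),\psi(x)),
\]
which equals the standard uniform distance between the restrictions. Hence $r_N$ is an isometric embedding from $(G_N, d_{C^0})$ into $(C(\overline{B(N)},\R^{2n+1}), d_{\sup})$.

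Finally, the target $C(\overline{B(N)},\R^{2n+1})$ with the uniform topology is separable: $\overline{B(N)}$ is a compact metric space, and polynomials in the coordinates with rational coefficients, applied componentwise, form a countable dense subset by the Stone--Weierstrass theorem. Therefore $G_N$ is separable as a subspace of a separable metric space, and combining over $N$ gives separability of $\Contoc(\R^{2n+1})$. There is no genuine obstacle here: the only thing requiring care is that the $\tau_{C^0}$-topology really does coincide with the subspace topology coming from the uniform metric on restrictions, which is immediate from the support condition.
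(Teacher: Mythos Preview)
Your argument is correct and follows essentially the same route as the paper: exhaust $\Contoc(\R^{2n+1})$ by the subgroups supported in closed balls, embed each such subgroup isometrically into $C(\overline{B(N)},\R^{2n+1})$ with the uniform metric, and conclude separability from Stone--Weierstrass with rational-coefficient polynomials. The only difference is that you spell out more carefully why the restriction map is an isometry, which the paper leaves implicit.
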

\begin{proof}
For an integer $k\geq 1$, denote by $B(k)\subset \R^{2n+1}$ the open ball of radius $k$ with center at the origin. Since $\Contoc(\R^{2n+1}) = \cup_{k_\geq 1} \Contoc(B(k))$, it is enough to show that $\Contoc(B(k), \tau_{C^0})$ is separable for every $k.$ To this end, recall that a subset of a separable metric space is separable. Now, we use that $\Contoc(B(k))\subset C(\overline{B}(k), \R^{2n+1})$ where $\overline{B}(k)$ denotes the closed ball. To show that $(C(\overline{B}(k), \R^{2n+1}), \tau_{C^0})$ is separable, we notice that maps of the form $(p_1,\ldots, p_{2n+1})$, $p_i$ being polynomials with rational coefficients, constitute a countable dense subset of $C(\overline{B}(k), \R^{2n+1})$ by the real version of the Stone-Weierstrass theorem.
\end{proof}

\subsection{Contact squeezing} Let $(Y,\xi)$ be a contact manifold and $\alpha$ be a contact form adapted to the contact structure, that is $\xi=\ker(\alpha)$. Denote by $R_\alpha$ the Reeb vector field associated to $\alpha$, given by $d\alpha (R_\alpha, \cdot)=0,\alpha(R_\alpha)=1.$ To any time-dependent function $H_t$ on $Y$ we can associate a unique time-dependent vector field $X_{H_t}$ given by the following equations:

\begin{align}\label{form: generating a contact isotopy}
\begin{cases}
d\alpha(X_{H_t}, \cdot)=\mathrm{d}H_t(R_\alpha)\alpha-dH_t\\
\alpha(X_{H_t})=H_t
\end{cases}
\end{align}

The flow of the vector field $X_{H_t}$ defines a contact isotopy $\phi_t$, $\phi_0=id$ and $H_t$ is called \textit{the contact Hamiltonian of} $\phi_t.$ Conversely, every contact isotopy $\phi_t$, $\phi_0=id$ is generated by a contact Hamiltonian given by $H_t(\phi_t(x))=\alpha (\dot{\phi}_t(x)).$

We use contact Hamiltonians to define a squeezing contactomorphism, which will be used throughout the paper. More precisely, we have the following lemma:

\begin{lemma}\label{Lemma:Ball_Sqeezing}
Denote by $B(r)\subset \R^{2n+1}$ the ball of radius $r.$ Let $0<r<R$ and $0<a \leq 1.$ There exists $\Psi_a \in \Contoc (B(R), \xi_0)$ such that $\Psi_a(x,y,z)=(a x, a y, a^2 z)$ for every $(x,y,z)\in B(r).$
\end{lemma}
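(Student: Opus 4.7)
The plan is to construct $\Psi_a$ as the time-$\log(1/a)$ map of the contact isotopy generated by a cut-off of the globally defined Hamiltonian whose flow is the linear scaling $(x,y,z)\mapsto (e^{-t}x,e^{-t}y,e^{-2t}z)$ on all of $(\R^{2n+1},\xi_0)$.

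First, I would identify the relevant Hamiltonian. The family $\psi_t(x,y,z)=(e^{-t}x,e^{-t}y,e^{-2t}z)$ satisfies $\psi_t^{*}\alpha_0 = e^{-2t}\alpha_0$ and is therefore a contact isotopy of $(\R^{2n+1},\xi_0)$. Its infinitesimal generator is the time-independent vector field $X=-x\partial_x-y\partial_y-2z\partial_z$, whose contact Hamiltonian with respect to $\alpha_0$ is
\[
H(x,y,z) \;=\; \alpha_0(X) \;=\; -2z + \sum_{i=1}^{n} x_i y_i,
\]
as one checks directly from \eqref{form: generating a contact isotopy}.

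Next, I would truncate $H$. Pick radii $r<r_1<r_2<R$ and a smooth function $\chi\colon \R^{2n+1}\to[0,1]$ with $\chi\equiv 1$ on $B(r_1)$ and $\supp(\chi)\subset B(r_2)$. Let $K:=\chi H$. Since $K$ is smooth and compactly supported in $B(R)$, the isotopy $\phi_t$ it generates via \eqref{form: generating a contact isotopy} is globally defined for all $t\in\R$ and lies in $\Contoc(B(R),\xi_0)$.

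The main step is to show that $\phi_t|_{B(r)}=\psi_t|_{B(r)}$ for every $t\geq 0$. On the open set $B(r_1)$ one has $\chi\equiv 1$, so both $K=H$ and $dK=\chi\,dH+H\,d\chi=dH$; by linearity of \eqref{form: generating a contact isotopy} in $H$ and $dH$, the associated contact vector fields satisfy $X_K=X_H=X$ pointwise on $B(r_1)$. Independently, for $p=(x,y,z)\in B(r)$ and $t\geq 0$, the elementary estimate
\[
|e^{-t}x|^2+|e^{-t}y|^2+|e^{-2t}z|^2 \;\leq\; e^{-2t}\bigl(|x|^2+|y|^2+|z|^2\bigr) \;<\; r^2
\]
shows that the whole forward trajectory $\{\psi_s(p):s\in[0,t]\}$ is contained in $B(r)\subset B(r_1)$. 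Uniqueness of solutions of the flow equation then forces $\phi_s(p)=\psi_s(p)$ throughout $s\in[0,t]$. Setting $\Psi_a:=\phi_{\log(1/a)}$ yields the desired element of $\Contoc(B(R),\xi_0)$ whose restriction to $B(r)$ is $(x,y,z)\mapsto (ax,ay,a^2 z)$.

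The only mildly delicate point is the cut-off comparison, and this is handled by the linearity observation above combined with the vanishing of $d\chi$ on the plateau $B(r_1)$; everything else reduces to the radial estimate on the contracting scaling flow.
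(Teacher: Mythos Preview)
Your argument is correct and follows essentially the same route as the paper: both construct $\Psi_a$ by taking the global scaling contact isotopy, extracting its contact Hamiltonian, cutting it off, and checking that the cut-off flow agrees with the scaling on $B(r)$ because trajectories starting in $B(r)$ stay there. The only cosmetic difference is your exponential parametrization $\psi_t=(e^{-t}x,e^{-t}y,e^{-2t}z)$ with autonomous Hamiltonian versus the paper's linear parametrization $\widetilde\Psi_{1+(a-1)t}$; your version is in fact slightly more explicit about the Hamiltonian and about why the cut-off does not disturb the flow on $B(r)$.
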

\begin{proof}
First, we notice that for every $a>0$, the map $\widetilde{\Psi}_a (x,y,z)=(a x, a y, a^2 z)$ is a (non-compactly supported) contactomorphism of $(\R^{2n+1},\xi_0).$ Indeed, $\widetilde{\Psi}_a ^* \alpha_0 = a^2 \alpha_0.$ Moreover, $\widetilde{\Psi}_a $ is isotopic to $id$ through contactomorphisms, via the isotopy $\widetilde{\Psi}_{a,t} = \widetilde{\Psi}_{1+(a-1)t}.$ Thus, there exists a contact Hamiltonian $\widetilde{H}:\R^{2n+1}\times [0,1] \to \R$ which generates  $\widetilde{\Psi}_{a,t}.$ Let $\rho:\R^{2n+1}\to \R$ be a cut-off function, equal to 1 on $B(r)$ and equal to 0 outside of $B(R).$ By definition, Hamiltonian $H=\rho \widetilde{H}$ generates an isotopy which on $B(r)$ coincides with $\widetilde{\Psi}_{a,t}$, while it is supported inside $B^{2n+1}(R).$ $\Psi_a$ is the time-one map of this isotopy.
\end{proof}

\subsection{Contact non-squeezing}
In \cite{EKP06} Eliashberg, Kim and Polterovich introduced the following notion.

\begin{dfn}
Let $(Y,\xi)$ be a contact manifold and $U,V \subset Y$ open. We say that $V$ can be squeezed into $U$ if there exists $\phi \in \Contoc(Y)$ such that $\phi (\overline{V})\subset U.$
\end{dfn}

Let $B(r)\subset \R^{2n}$ be the ball of radius $r.$ We consider domains in $(\R^{2n} \times S^1,\xi_0)$ of the form $B(r) \times S^1.$

\begin{thm}\label{Thm:Non-squeezing_Original}
If $1\leq \pi r^2 <\pi R^2$ then $B(R)\times S^1$ cannot be squeezed into $B(r)\times S^1.$
\end{thm}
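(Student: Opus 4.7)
Suppose for contradiction that $\phi\in \Contoc(\R^{2n}\times S^1)$ satisfies $\phi(\overline{B(R)\times S^1})\subset B(r)\times S^1$. The plan is to extract a contradiction from a conjugation-invariant, monotone numerical invariant attached to open domains of $\R^{2n}\times S^1$. Concretely, for an open $U\subset \R^{2n}\times S^1$ set
\[
c(U)\;=\;\sup\bigl\{\gamma(\psi)\;:\;\psi\in \Contoc(\R^{2n}\times S^1),\ \supp(\psi)\subset U\bigr\},
\]
where $\gamma$ is Sandon's spectral norm, whose construction via generating function homology is independent of the present theorem. Conjugation invariance of $\gamma$ combined with the identity $\supp(\phi\psi\phi^{-1})=\phi(\supp(\psi))$ forces $c(B(R)\times S^1)\le c(B(r)\times S^1)$, and the theorem reduces to showing that these two numbers are genuinely separated whenever $1\le \pi r^2<\pi R^2$.

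For the lower bound, stating that $c(B(R)\times S^1)$ grows with $\pi R^2$, I would produce explicit contactomorphisms $\psi_k$ indexed by integers $k\le \pi R^2$ as time-one flows of cut-off Reeb-like Hamiltonians of the form $H_k=k\cdot \rho(|x|^2+|y|^2)$, where $\rho$ is a smooth bump function equal to $1$ on the closure of the $R$-ball in $\R^{2n}$. On the interior of its support $\psi_k$ acts as a rigid translation in the $S^1$-direction by $k$ full turns, and a direct generating function computation yields a lower bound on $\gamma(\psi_k)$ proportional to $k$. Dually, the matching upper bound for $c(B(r)\times S^1)$ is the analytic content of the theorem: it asserts that no compactly supported contactomorphism whose support lies inside $B(r)\times S^1$ can realise generating function spectral values beyond what the symplectic area $\pi r^2$ permits, and the quantization threshold $\pi r^2\ge 1$ reflects the integrality of the generating function spectrum on the prequantization.

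The principal obstacle is precisely this upper bound. Establishing it requires a structural rigidity statement for generating function homology of contactomorphisms supported in a prequantized round ball. In \cite{EKP06} this is achieved through $S^1$-equivariant positive cylindrical contact homology, exploiting the vanishing of low-action Reeb orbits of a small prequantization; in the generating function framework it reduces to a Morse-theoretic count on a finite-dimensional approximating function whose topology is controlled by $B(r)$, with the threshold $\pi r^2=1$ emerging as the first integer at which a non-trivial critical value is forced. Once this rigidity is in place, choosing any integer $k$ with $\pi r^2<k\le \pi R^2$ contradicts the monotonicity inequality $c(B(R)\times S^1)\le c(B(r)\times S^1)$ and completes the proof.
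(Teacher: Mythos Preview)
The paper does not prove this theorem; it cites it from the literature (\cite{EKP06,Chiu17,Fraser16,San1,FSZ23}) and uses it as a black box. Your proposal is therefore not to be compared against a proof in the paper, but assessed on its own.

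Your strategy is essentially Sandon's capacity approach from \cite{San1}: define $c(U)=\sup\{\gamma(\psi):\supp\psi\subset U\}$, observe monotonicity under squeezing, and then separate $c(B(R)\times S^1)$ from $c(B(r)\times S^1)$. This is a legitimate route, but there are two genuine gaps.

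First, the upper bound on $c(B(r)\times S^1)$ is not proved; you explicitly flag it as ``the principal obstacle'' and only gesture at the methods of \cite{EKP06} or generating functions. That step \emph{is} the theorem: establishing the rigidity of generating function homology for contactomorphisms supported in $B(r)\times S^1$ is exactly the analytic content one is trying to prove, and appealing to the references here is circular. (As a minor point, your lower-bound construction is also garbled: a bump function ``equal to $1$ on the closure of the $R$-ball'' cannot be supported inside $B(R)$.)

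Second, even granting the upper bound in the form one actually obtains from the generating function machinery, the capacity $c(B(r)\times S^1)$ only sees $\lceil \pi r^2\rceil$, since $\gamma$ is integer-valued. Your final step ``choosing any integer $k$ with $\pi r^2<k\le \pi R^2$'' presupposes that such an integer exists. This recovers only the original Eliashberg--Kim--Polterovich statement (with the extra hypothesis that $[\pi r^2,\pi R^2]$ contains an integer), not the theorem as stated here, which holds for all $1\le \pi r^2<\pi R^2$. The sharp version requires the substantially more refined arguments of \cite{Chiu17,Fraser16,FSZ23}.
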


Theorem \ref{Thm:Non-squeezing_Original} was originally proven in \cite{EKP06} under an additional assumption that $[\pi r^2, \pi R^2]$ contains an integer. In stated generality, it was proven in \cite{Chiu17}, see also \cite{Fraser16}. For the proofs using generating functions see \cite{San1,FSZ23}. It is worth pointing out that if $\pi r^2 < \pi R^2 <1$ then $B(R)$ can be squeezed into $B(r)\times S^1$, see \cite{EKP06} for details.

\begin{rem}\label{Remark_Change_of_Basis}
Theorem \ref{Thm:Non-squeezing_Original} also holds if instead of $\xi_0$ we consider the contact structure $\xi_0'=\ker \alpha_0'$, where $\alpha_0' = dz+ \frac{1}{2} (\sum_{i=1}^n x_i dy_i - y_idx_i).$ Indeed, the map $\Psi: (\R^{2n}\times S^1, \xi_0) \to (\R^{2n}\times S^1,\xi_0')$ given by 
$$\Psi({\bf x},{\bf y},z)=\left( \frac{{\bf x}-{\bf y}}{\sqrt{2}}, \frac{{\bf x}+{\bf y}}{\sqrt{2}}, z - \frac{{\bf x} \cdot {\bf y}}{2} \mod 1 \right),$$
is a contactomorphism which maps $B(r)\times S^1$ to $B(r)\times S^1$ for all $r>0.$
\end{rem}

The original non-squeezing result of \cite{EKP06} was generalized by Albers and Merry in \cite{AM18}. We now recall their result.

Let $(W,d\lambda)$ be a Liouville manifold (completion of a Liouville domain), $c_1|_{\pi_2(W)}=0$.Denote by $\mathcal{L}_t:W\to W$ the Liouville flow given by the vector field $d\lambda(X_{\mathcal{L}}, \cdot)=\lambda$ and for $r>0,q\in W$, we denote $r q = \mathcal{L}_{\log r }(q).$ Given $U\subset W$ open, let $c_{sp}(U)$ be the symplectic capacity of $U$, see \cite[Section 5]{AM18} and references therein. We will not elaborate on the definition and properties of $c_{sp}.$ We only note that $c_{sp}(U)>0$ if $U\neq \emptyset$ and that $c_{sp}(r U)=r c_{sp}(U)$ for all $r>0.$

\begin{thm}[\cite{AM18}]\label{Thm:Non-squeezing_AM}
Let $U\subset W$ be an open set with compact closure such that $c_{sp}(U)=1.$ Let $0<r<R$ and assume that $[r,R]$ contains an integer. Then $(R U) \times S^1$ cannot be squeezed into $(rU)\times S^1.$
\end{thm}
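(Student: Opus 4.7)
The plan is to argue by contradiction using spectral invariants from Rabinowitz Floer homology, following the philosophy of Eliashberg--Kim--Polterovich but adapted to the Liouville setting. Suppose there exists $\phi \in \Contoc(W\times S^1)$ with $\phi(\overline{RU}\times S^1) \subset rU \times S^1$, and fix an integer $n \in [r,R]$. I would derive a contradiction by showing that an appropriate spectral invariant assigned to the hypersurface $\Sigma_s := \partial(sU) \times S^1$ must satisfy two incompatible bounds as $s$ varies between $r$ and $R$.

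First, I would set up the relevant Floer-theoretic package. Since the Reeb vector field of $\alpha = \lambda + d\theta$ on $W \times S^1$ equals $\partial_\theta$, all Reeb orbits are automatically closed with integer periods, which is the source of both difficulty (degeneracy) and power (rigidity at integer values) of the problem. On the symplectization of $\Sigma_s$ one can perturb the Rabinowitz action functional by a Hamiltonian implementing $\phi$ and extract spectral invariants $c_{AM}(\phi; \Sigma_s)$ as in \cite{AM18}. The crucial properties to invoke are: monotonicity under inclusion of the hypersurfaces (via continuation maps), invariance under Hamiltonian deformation supported away from $\Sigma_s$, and the Lagrangian control property which bounds $c_{AM}$ in terms of displacement energy of $\Sigma_s$.

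Second, I would use the Liouville scaling $\mathcal{L}_{\log s}$ which takes $U$ to $sU$ and is conformally symplectic with factor $s$. Normalizing so that $c_{sp}(U)=1$, the spectral capacity of $sU$ is exactly $s$; this identifies the spectral window in which nontrivial Floer classes of $\Sigma_s$ live. The assumption that $[r,R]$ contains an integer $n$ is then used to exhibit a nonzero Floer class at action level $n$: the closed orbit covering $S^1$ exactly $n$ times represents a Rabinowitz Floer generator whose homology class persists under small perturbations. The squeezing $\phi(\overline{RU}\times S^1) \subset rU \times S^1$ would imply $\phi$ displaces $\Sigma_R$ into the interior of $rU \times S^1$, so the displacement energy argument would force $c_{AM}$ at action level $n$ to vanish, contradicting the nontriviality just established.

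The main obstacle is the Floer-theoretic construction itself: defining Rabinowitz Floer homology on $\Sigma_s \subset W \times S^1$ with its degenerate Reeb flow, establishing compactness of moduli spaces (one needs control on the behavior at infinity of $W$ and on the $S^1$-direction), and proving the continuation invariance needed for monotonicity. The purely geometric part of the argument -- translating the squeezing into a displacement statement and concluding via action-energy inequalities -- is comparatively light, so essentially all of the real work is packaged into the properties of $c_{AM}$ taken as a black box from \cite{AM18}.
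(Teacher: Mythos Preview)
This theorem is not proved in the present paper at all: it is quoted verbatim from Albers--Merry \cite{AM18} and used as a black box (in the proof of Theorem~\ref{thm:Rok_Preq}). There is therefore no ``paper's own proof'' to compare your attempt against; the authors simply import the result.

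Your sketch does point in the direction of the actual Albers--Merry argument---Rabinowitz Floer homology on the prequantization, spectral invariants, and the integer in $[r,R]$ as an action-level obstruction---but as written it is a narrative outline rather than a proof. Several pieces are left as slogans: you invoke ``monotonicity under inclusion of the hypersurfaces,'' a ``Lagrangian control property,'' and an ``action-energy inequality'' without formulating any of them precisely, and the notation $c_{AM}(\phi;\Sigma_s)$ does not match how the invariant is actually set up (in \cite{AM18} the spectral number is attached to a contact isotopy class, not to a hypersurface together with a map). You also correctly identify, in your last paragraph, that essentially all the content lies in the Floer-theoretic construction you are taking on faith; so what you have written is closer to a table of contents for \cite{AM18} than an independent argument. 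If the goal is to understand or reproduce the proof, you should work directly from \cite{AM18}; within the scope of the present paper no proof is expected.
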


Lastly, we recall Theorem 1.24 from \cite{AM18}. It applies to the contact manifold $W\times \R^{2m}\times S^1$ equipped with the contact structure $\ker (\lambda+\lambda_0 + d\theta).$

\begin{thm}[\cite{AM18}]\label{Thm:Non-squeezing_Stabilization}
Let $U\subset W$ be an open set with compact closure such that $c_{HZ}(U)$ is finite. If $r_0,r_1>0$ satisfy $\lceil \pi r_0^2 \rceil < \lceil c_{HZ}(U) \rceil$ and $r_1\geq \sqrt{\frac{c_{HZ}(U)}{\pi}} +1$ then $U\times B(r_1) \times S^1$ cannot be squeezed into $U\times B(r_0) \times S^1.$
\end{thm}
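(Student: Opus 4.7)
The plan is to argue by contradiction along the lines of \cite{AM18}, using a spectral invariant on $\ContocU(W\times \R^{2m}\times S^1)$ built from Rabinowitz Floer homology (RFH), combined with the Hofer--Zehnder axiom for $c_{HZ}$. Suppose a contactomorphism $\phi\in\Contoc(W\times \R^{2m}\times S^1)$ squeezes $\overline{U\times B(r_1)\times S^1}$ into $U\times B(r_0)\times S^1$. To each open, relatively compact domain $V\subset W\times \R^{2m}$ with contact-type boundary, I would attach a filtered RFH of $\partial V\times S^1$ inside the symplectic completion and extract from it a positive spectral value $c(V)$ with three properties: (i) monotonicity, in the sense that if $\psi\in\Contoc$ satisfies $\psi(\overline V)\subset V'$ then $c(V)\leq c(V')$; (ii) for the target $V'=U\times B(r_0)$, the action spectrum of the Reeb flow on $\partial V'\times S^1$ is controlled by $\pi r_0^2$ from the $B(r_0)$-factor and by the integer Reeb period coming from the $S^1$-factor, forcing $c(V')\leq \lceil \pi r_0^2\rceil$; and (iii) for the source $V=U\times B(r_1)$, the lower bound $c(V)\geq \lceil c_{HZ}(U)\rceil$ holds once $r_1$ is large enough to realise a full-period displacement in the fiber.

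The numerical heart of the argument is (iii). The assumption $r_1 \geq \sqrt{c_{HZ}(U)/\pi}+1$ guarantees that $B(r_1)$ contains an annulus of symplectic area at least $c_{HZ}(U)$ concentric with $B(r_0)$, enough to build an autonomous Hamiltonian $H$ compactly supported in $U\times B(r_1)\subset W\times \R^{2m}$ whose oscillation exceeds $c_{HZ}(U)$ and whose time-one flow rigidly translates $U\times B(r_0)$ inside the $B(r_1)$-factor. By definition of the Hofer--Zehnder capacity, every such $H$ admits a non-constant fast periodic orbit; lifting $H$ to a contact Hamiltonian and feeding it into the Rabinowitz action functional produces a translated point for the contact isotopy associated to $\phi$ composed with the displacement, whose action is at least $c_{HZ}(U)$. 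The integer ceiling on each side comes from the fact that the Reeb flow on the $S^1$-factor of the prequantization has period $1$, so spectral values are only meaningful modulo integer shifts.

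Combining (i)--(iii) with the assumed squeezing then gives
\begin{equation*}
\lceil c_{HZ}(U)\rceil \;\leq\; c\big(U\times B(r_1)\big)\;\leq\; c\big(U\times B(r_0)\big)\;\leq\; \lceil \pi r_0^2\rceil,
\end{equation*}
contradicting the hypothesis $\lceil \pi r_0^2 \rceil < \lceil c_{HZ}(U)\rceil$; hence no such $\phi$ exists.

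The main obstacle is the analytic setup of RFH in this non-compact product geometry and the verification of (ii) and (iii) with the correct integer ceilings. The $S^1$-direction forces an integer shift in the detected action spectrum, which accounts for the $\lceil\cdot\rceil$ appearing on both sides; pinning down the sharp value $\lceil \pi r_0^2\rceil$ rather than $\pi r_0^2$ requires a careful analysis of the interaction between the prequantization Reeb flow and the generators of RFH. Proving the sharp monotonicity (i) under squeezings of product domains, and establishing the requisite compactness, transversality and maximum-principle estimates for the Floer moduli spaces when $W$ is only a Liouville completion, is the technical content carried out in \cite{AM18}.
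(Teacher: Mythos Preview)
The paper does not prove this statement at all: it is quoted verbatim as Theorem~1.24 of \cite{AM18} and used as a black box in the proof of Theorem~\ref{thm:Roklin_Stabilization}. There is therefore no ``paper's own proof'' to compare your proposal against.

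That said, your sketch is a plausible high-level caricature of the Albers--Merry argument, but it is not a proof as written. The three properties (i)--(iii) you list are exactly the hard content of \cite{AM18}, and you do not establish any of them; in particular, your justification of (iii) conflates two different mechanisms. You argue that the Hofer--Zehnder axiom produces a fast periodic orbit of a displacing Hamiltonian, but what you actually need is a \emph{lower bound} on a spectral number attached to the domain $U\times B(r_1)$, and the existence of some fast orbit does not by itself give that. Likewise, the assertion that the annulus of area $\geq c_{HZ}(U)$ inside $B(r_1)$ allows one to build a Hamiltonian that ``rigidly translates $U\times B(r_0)$'' is geometrically muddled: a rigid translation in $\R^{2m}$ is not compactly supported, and cutting it off destroys the control on periodic orbits that you would need. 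Finally, the claim in (ii) that the spectral value is bounded by $\lceil \pi r_0^2\rceil$ requires identifying which Rabinowitz Floer class you are pairing against and computing its action window, which you have not done. If you want to actually prove the theorem rather than cite it, you would need to reproduce the relevant sections of \cite{AM18}; for the purposes of the present paper, a citation suffices.
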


\subsection{Contact action spectrum} 

Let $(W,d\lambda)$ be an exact symplectic manifold and $(W\times S^1, \alpha = \lambda+d\theta)$ its prequantization space. Denote by $\Phi^\alpha$ the Reeb flow of $\alpha.$ One readily checks that $\Phi^\alpha_t(q,\theta)=(q, \theta + t \mod 1).$  $z \in W\times S^1$ is called a {\it translated point} of $\phi\in \Contoc ( W\times S^1)$ if there exists $\tau\in \R$ such that $\Phi^\alpha_{-\tau} ( \phi (z))=z$ and $(\phi^* \alpha)_z=\alpha_z.$ {\it The action of} $z$ is given by $\mathcal{A}_\alpha(z) = \int_0^1 d\theta(\dot{\phi}_t(z)) dt,$ where $\{ \phi_t \}_{t\in [0,1]} $ is a smooth path in $\Contoc ( W\times S^1)$ such that $\phi_0=id$ and $\phi_1=\phi.$

\begin{lemma}\label{Lemma:TP_Action} The action of a translated point $z$ of $\phi \in \Contoc ( W\times S^1)$ does not depend on the choice of a path $\{ \phi_t \}_{t\in [0,1]}.$
\end{lemma}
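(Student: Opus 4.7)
The plan is to apply Stokes's theorem to the closed $1$-form $d\theta$ on $W\times S^1$.

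Given two smooth paths $\{\phi_t^0\}, \{\phi_t^1\}$ in $\Contoc(W\times S^1)$ that both join $id$ to $\phi$, I would first connect them by a smooth homotopy $\phi_t^s$, $s\in[0,1]$, rel endpoints, so that $\phi_0^s=id$ and $\phi_1^s=\phi$ for every $s$. I would then consider the smooth map $F:[0,1]^2\to W\times S^1$ defined by $F(s,t)=\phi_t^s(z)$ and apply Stokes's theorem:
\[
\int_{\partial [0,1]^2} F^{*}(d\theta) \;=\; \int_{[0,1]^2} F^{*}(d(d\theta)) \;=\; 0.
\]

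Computing the boundary contributions: on the horizontal sides $t=0$ and $t=1$, the map $F$ is constant (equal to $z$ and $\phi(z)$, respectively), so those edges contribute nothing. With the appropriate orientations, the two vertical sides contribute exactly $\mathcal{A}_{\alpha}^{\phi^{1}}(z)-\mathcal{A}_{\alpha}^{\phi^{0}}(z)$, which therefore vanishes. Equivalently, one can reach the same conclusion by differentiating $\mathcal{A}_\alpha^{\phi^s}(z)$ in $s$: since $d\theta$ is closed, swapping $\partial_s$ and $\partial_t$ reduces the derivative to the boundary term $d\theta(\partial_s\phi_1^s(z))-d\theta(\partial_s\phi_0^s(z))$, which is zero because $\phi_0^s=id$ and $\phi_1^s=\phi$ are independent of $s$. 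I would remark that the translated point condition $(\phi^*\alpha)_z=\alpha_z$ does not actually enter this computation; it is needed only to render the definition of the action geometrically meaningful.

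The only real obstacle I anticipate is showing that any two smooth paths in $\Contoc(W\times S^1)$ joining $id$ to $\phi$ can in fact be connected by such a homotopy rel endpoints, i.e., that the corresponding space of paths is path-connected. I would address this either by verifying the relevant connectivity for the exact symplectic manifolds $W$ under consideration, or by reinterpreting the lemma as a well-definedness statement on (an appropriate quotient of) the universal cover $\ContocU(W\times S^1)$, which is after all the natural domain on which the spectral invariants $c_{AM}$ of Albers--Merry are defined.
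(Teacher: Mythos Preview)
Your Stokes-theorem argument with the closed form $d\theta$ is exactly the right mechanism, but the gap you flag is genuine and neither of your proposed fixes closes it. There is no reason to expect $\Contoc(W\times S^1)$ to be simply connected for general Liouville $W$, and retreating to the universal cover would only show that the action depends on the homotopy class of the path---strictly weaker than the lemma as stated. The paper really does need full path-independence, since $Spec_\alpha(\phi)$ is used downstream as an invariant of $\phi\in\Contoc(W\times S^1)$ alone.

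The paper's proof sidesteps the issue by contracting the relevant loop \emph{in the target $W\times S^1$} rather than homotoping paths in the contactomorphism group. Concatenate your two paths into a loop $t\mapsto(\psi^{op}\#\phi)_t$ based at $id$. Now exploit compact support: choose a point $x$ outside the joint support of this loop and a path $l:[0,1]\to W\times S^1$ from $x$ to $z$. The map
\[
\Psi(s,t)=(\psi^{op}\#\phi)_t(l(s))
\]
then satisfies $\Psi(0,t)\equiv x$, $\Psi(s,0)=\Psi(s,1)=l(s)$, and $\Psi(1,t)=(\psi^{op}\#\phi)_t(z)$; applying Stokes to $\Psi^*d(d\theta)=0$ yields $\int_0^1 d\theta(\dot\phi_t(z))\,dt=\int_0^1 d\theta(\dot\psi_t(z))\,dt$ immediately. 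In short: the $s$-variable moves the \emph{point} (from a point fixed by every $\phi_t$ and $\psi_t$ out to $z$), not the isotopy, and compact support---not simple connectivity of the diffeomorphism group---is what makes the contraction possible.
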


\begin{proof}
Let $\{ \phi_t \}_{t\in [0,1]}, \{ \psi_t \}_{t\in [0,1]}$ be two smooth paths in $\Contoc ( W\times S^1)$ starting at $id$ and ending at $\phi.$ Denote by $\psi^{op} \# \phi$ the concatenation given by
$$(\psi^{op} \# \phi)_t= \begin{cases}
      \phi_{2t}, & \text{for}\ 0\leq t \leq 1/2 \\
      \psi_{2-2t}, & \text{for}\ 1/2 \leq t \leq 1 \\
      \end{cases}.$$
Let $x\in W\times S^1$ be an arbitrary point outside of the support of $\{ \phi_t \}_{t\in [0,1]}, \{ \psi_t \}_{t\in [0,1]},$ and $l:[0,1]\to W\times S^1$ an arbitrary smooth path such that $l(0)=x,l(1)=z.$ Define a maps $\Psi:[0,1] \times [0,1] \to W \times S^1$ as
$$\Psi(s,t)= (\psi^{op} \# \phi)_t (l(s)). $$
$\Psi$ is a contraction of the loop $\psi^{op} \# \phi$ along $l$, i.e.
$$\Psi(0,t)=x, ~\Psi(1,t)=(\psi^{op} \# \phi)_t(z), ~ \Psi(s,0)=\Psi(s,1)=l(s).$$
Thus, applying Stokes' theorem we get that
$$0 = \int\limits_{[0,1]\times [0,1]} \Psi^* d(d\theta)= \int_0^1 d\theta \left( \frac{d}{dt} (\psi^{op} \# \phi)_t(z) \right) dt =$$
$$ =2\left( \int_0^1 d\theta (\dot{\phi}_t(z) )dt - \int_0^1 d \theta (\dot{\psi}_t(z)) dt \right) .$$
which proves the claim.
\end{proof}

In light of Lemma \ref{Lemma:TP_Action}, we define the {\it contact action spectrum} of $\phi \in \Contoc ( W\times S^1)$ as 
$$Spec_\alpha(\phi)= \{ \mathcal{A}_\alpha(z)~|~z \text{ is a translated point of }\phi \}.$$

\subsection{Viterbo's spectral invariants and spectral norm}\label{Subsection:Viterbo_Basics}

We will present certain properties of Viterbo's spectral invariants which will be used in the paper. For a detailed account, see \cite{Vit92, San15}.

Let $\omega_0=\sum_{i=1}^n dx_i \wedge dy_i=d \lambda_0$, $\lambda_0 = -\sum_{i=1}^n y_i dx_i$ be the canonical symplectic form of $\R^{2n}.$ Given a compactly supported Hamiltonian $H_t:\R^{2n}\times [0,1] \to \R$, the Hamiltonian vector field defined by $H$ is given by $\omega(X_H,\cdot) = - dH_t.$ We denote by $\phi^H_t, t\in [0,1]$ the flow of $X_H$ and by $\Ham_c(\R^{2n})$ the group of compactly supported Hamiltonian diffeomorphisms of $\R^{2n}.$ The action of a path $l : [0,1] \to \R^{2n}$ is defined as
$$\mathcal{A}_H (l)= \int_0^1 H_t(l(t))dt -\int_0^1 \lambda_0 (\dot{l}(t))dt.$$
Taking the action of a path $l(t) = \phi^H_t(q) $, we define the action of  a point $q\in \R^{2n}$ with respect to $H$ as
\begin{equation}\label{eq:Action_Points}
A_{\phi}(q)=\mathcal{A}_H (\{ \phi^H_t(q) \})= \int_0^1 H_t(\phi^H_t(q))dt -\int_0^1 \lambda_0 (\dot{\phi}^H_t(q))dt.
\end{equation}
The action only depends on $\phi=\phi_1$ and not on the isotopy $\{ \phi_t \}$, which justifies the above notation.  Given $\phi \in \Ham_c(\R^{2n})$, the action spectrum of $\phi$ is
$$Spec(\phi)= \{ A_\phi(q)~|~\phi(q)=q \}.$$
In \cite{Vit92} Viterbo defined spectral invariants $c^-_{Ham},c^+_{Ham}: \Ham_c(\R^{2n})\to \R$ whose properties are summarized by the following theorem:

\begin{thm}\label{Thm_Viterbo_Basic}
For all $\phi,\psi \in \Ham_c(\R^{2n}),$ $c^{\pm}_{Ham}$ satisfy
\begin{itemize}
\item[1)] $c^{\pm}_{Ham} \in Spec(\phi) ;$
\item[2)] $c^-_{Ham}(\phi)\leq 0 \leq c^+_{Ham}(\phi)$ and $c^-_{Ham}(\phi)= c^+_{Ham}(\phi)=0$ if and only if $\phi=id$;
\item[3)] $c^-_{Ham}(\phi)=-c^+_{Ham}(\phi^{-1}) ;$
\item[4)] $c^+_{Ham}(\phi \psi) \leq c^+_{Ham}(\phi) + c^+_{Ham}(\psi);$
\item[5)] $c^{\pm}_{Ham}(\psi \phi \psi^{-1})=c^{\pm}_{Ham}(\phi) ;$
\item[6)] $c^{\pm}_{Ham}$ are continuous with respect to Hofer's metric.
\end{itemize}
\end{thm}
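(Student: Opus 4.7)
The plan is to follow Viterbo's original construction via generating functions. First, I would use the Sikorav--Laudenbach existence theorem to associate to every $\phi \in \Ham_c(\R^{2n})$ a generating function quadratic at infinity (GFQI) $S_\phi : S^{2n} \times \R^N \to \R$, where $S^{2n}$ is the one-point compactification of $\R^{2n}$ and the graph of $\phi$, viewed as a Lagrangian in $T^*S^{2n}$ via the standard symplectomorphism $\R^{2n} \times \R^{2n} \to T^*\R^{2n}$, is the one generated by $S_\phi$. The crucial computation here is that the critical values of $S_\phi$ coincide with $Spec(\phi)$ from (\ref{eq:Action_Points}), and critical points are in bijection with fixed points of $\phi$.

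Next, for a cohomology class $\alpha \in H^*(S^{2n})$ I would define the min-max value
\[
c(\alpha, S_\phi) \;=\; \inf\bigl\{ a \in \R \;:\; \alpha \in \mathrm{im}\bigl(H^*(\{S_\phi \leq a\} \cup E^-) \to H^*(S^{2n} \times \R^N \cup E^-)\bigr)\bigr\},
\]
where $E^-$ denotes the negative eigenspace bundle of the defining quadratic form at infinity. Standard Lusternik--Schnirelmann theory ensures $c(\alpha, S_\phi)$ is a critical value. I would then set $c^+_{Ham}(\phi) = c(\mu, S_\phi)$ for $\mu$ the top class and $c^-_{Ham}(\phi) = c(1, S_\phi)$ for $1$ the unit. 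Property 1 is immediate. For $\phi = id$ the generating function is itself a quadratic form, so $c^\pm(id) = 0$; combined with the sandwich $c^-(\phi) \leq 0 \leq c^+(\phi)$ forced by $1 \smile \mu = \mu$, this gives one half of property 2.

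The key technical input is independence from the choice of generating function: I would invoke Viterbo's uniqueness theorem, which states that any two GFQIs for the same Lagrangian are equivalent up to fiberwise diffeomorphism and stabilization by quadratic forms, and observe that the min-max values are manifestly invariant under such moves. Given this, property 3 follows from Poincar\'e duality applied to the generating function $-S_\phi$ of $\phi^{-1}$, and property 5 follows because conjugation by $\psi$ changes the generating function by a diffeomorphism of the base. For property 4, I would build a generating function for $\phi\psi$ by a fiber-sum construction $S_\phi \boxplus S_\psi$ from those of $\phi$ and $\psi$, and apply the standard cup-length estimate $c(\alpha \smile \beta, S_\phi \boxplus S_\psi) \leq c(\alpha, S_\phi) + c(\beta, S_\psi)$ with $\alpha = \mu, \beta = 1$.

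The main obstacle is the sharp Hofer-continuity statement in property 6 together with the non-degeneracy half of property 2. For continuity, I would pick an isotopy $\phi^H_t$ and construct a smooth family $S_t$ of generating functions that depends on $H$ in a Lipschitz way with slope controlled by $\int \max H_t - \int \min H_t$, so the min-max values inherit a Lipschitz bound yielding $|c^\pm_{Ham}(\phi) - c^\pm_{Ham}(\psi)| \leq \|\phi\psi^{-1}\|_{\mathrm{Hofer}}$. For non-degeneracy, I would argue by contradiction: if $c^+(\phi) = c^-(\phi) = 0$, the min-max definitions together with Lusternik--Schnirelmann multiplicity force $S_\phi$ to have a highly degenerate critical set at level $0$, and a careful analysis using the cup-length bound $c^-(\phi) \geq -c^+(\phi^{-1})$ rules out any fixed points with nonzero action, forcing $\phi = id$. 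The subtle point throughout is the compatibility of the cohomological min-max with the fiber-sum operation, which is the classical heart of Viterbo's construction.
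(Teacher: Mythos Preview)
The paper does not prove this theorem. It is stated in Subsection~\ref{Subsection:Viterbo_Basics} as background, with the sentence ``For a detailed account, see \cite{Vit92, San15}'' and no argument given; everything that follows simply uses properties 1)--6) as black boxes. So there is nothing to compare your proposal against on the paper's side.

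Your sketch is essentially Viterbo's original argument from \cite{Vit92}, which is exactly what the paper is citing. A couple of small technical points are stated imprecisely. For property~5, conjugation by $\psi$ does \emph{not} in general change the generating function by a diffeomorphism of the base $S^{2n}$: the map $\psi\times\psi$ on $\R^{2n}\times\R^{2n}$ becomes, under the identification with $T^*\R^{2n}$, a genuine symplectomorphism rather than a cotangent lift. The actual argument runs a continuity-plus-spectrality path: $t\mapsto c^{\pm}(\psi_t\phi\psi_t^{-1})$ is continuous and takes values in $Spec(\psi_t\phi\psi_t^{-1})=Spec(\phi)$, hence is constant. For property~3, ``$-S_\phi$ generates $\phi^{-1}$'' is morally right but hides a change of variable on the base; one must also identify how the classes $1$ and $\mu$ swap under the relevant duality. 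The non-degeneracy half of property~2 is the most delicate step and your outline is vague there; the standard route is that $c(1,S_\phi)=c(\mu,S_\phi)$ forces, via the Lusternik--Schnirelmann cup-length inequality, the critical set at that level to carry the full cohomology of $S^{2n}$, which in turn forces the Lagrangian to be the zero section. None of this is a gap in your plan, just places where the write-up would need more care than the sketch indicates.
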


\begin{dfn}
The spectral norm $\gamma:\Ham_c (\R^{2n}) \to \R$ is defined as 
$$\gamma(\phi)=c^+(\phi) -  c^-(\phi).$$
\end{dfn}

\subsection{Sandon's spectral invariants and spectral norm}\label{Subsection:Sandon_Basics}

We briefly review basic properties of spectral invariants and spectral norm defined by Sandon. For a detailed treatment we refer the reader to \cite{San1,San2,San15}.  We also prove Lemma \ref{Lemma:Lifts_Computations} which will be useful when constructing examples in Section \ref{Sec:Examples}.

In order to define the spectral norm, we first need to consider spectral invariants $c^-,c^+:\Contoc(\R^{2n}\times S^1) \to \R.$ These are defined using the mix-max procedure for filtered homology of generating functions.  

\begin{prop}\label{prop:C_non_integer} For all $\phi \in \Contoc(\R^{2n}\times S^1)$, $c^-,c^+$ satisfy
\begin{itemize}
\item[1)] $c^-(\phi)\leq 0 \leq c^+(\phi)$ and $c^-(\phi)= c^+(\phi)=0$ if and only if $\phi=id$;
\item[2)] $c^-(\phi),c^+(\phi) \in Spec_{\alpha_0}(\phi) .$
\end{itemize}
\end{prop}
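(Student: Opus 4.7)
My plan is to reduce both parts of the proposition to standard properties of the min--max construction that defines $c^\pm$ via a generating function of $\phi$. The starting point is to recall that every $\phi\in\Contoc(\R^{2n}\times S^1)$ admits a generating function quadratic at infinity $F_\phi:\R^{2n}\times S^1\times\R^N\to\R$, unique up to fibrewise stabilisation and fibre-preserving diffeomorphism, whose critical points are in canonical bijection with translated points of $\phi$, and whose critical values coincide with the corresponding contact actions. The invariants are then defined as the min--max values
\[
  c(u,F_\phi)=\inf\bigl\{\,a\in\R \,:\, u\in \mathrm{image}\bigl(H^\ast(E_\phi^{+\infty},E_\phi^{-\infty})\to H^\ast(E_\phi^{a},E_\phi^{-\infty})\bigr)\,\bigr\},
\]
where $E_\phi^a=\{F_\phi\le a\}$, with $c^-(\phi)$ associated to the unit class and $c^+(\phi)$ to the top generator of $H^\ast(\R^{2n}\times S^1)$ (transported through the Thom isomorphism for the negative eigenbundle at infinity). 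Independence of the choice of $F_\phi$ is a routine verification using naturality of min--max under stabilisation.

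For part $(2)$, I would invoke classical Lusternik--Schnirelmann theory: if $c(u,F_\phi)$ were a regular value, a negative gradient-like deformation would push $E_\phi^{c+\varepsilon}$ into $E_\phi^{c-\varepsilon}$ while preserving the realisation of $u$, contradicting the defining infimum. Hence $c^\pm(\phi)$ is always a critical value of $F_\phi$, and by the generating-function correspondence this exactly means that $c^\pm(\phi)\in Spec_{\alpha_0}(\phi)$.

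For part $(1)$, the normalisation $c^\pm(id)=0$ is immediate from taking $F_{id}=Q$ a non-degenerate quadratic form, for which both min--max values vanish. The inequalities $c^-(\phi)\le 0\le c^+(\phi)$ follow from a deformation argument along a contact isotopy $\{\phi_t\}$ joining $id$ to $\phi$, combined with the cohomological degree ordering of the two classes used: the unit class is detected at arbitrarily low sublevel sets in the model, while the top class requires the full topology to be available, so min--max values of the two classes at $id$ bracket $0$ and are continuous along the isotopy outside of bifurcation values. The main obstacle I expect is the non-degeneracy implication $c^-(\phi)=c^+(\phi)=0\Rightarrow \phi=id$. My intended approach is an energy--displacement argument: if $\phi\neq id$, locate a small Darboux ball $B$ whose image $\phi(B)$ is disjoint from $B$, and combine this with a lower bound for $\gamma=c^+-c^-$ by the contact displacement energy of $B$; the latter is strictly positive, forcing $c^+(\phi)>c^-(\phi)$ and hence at least one of $c^\pm(\phi)$ to be nonzero. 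Making this displacement--energy estimate precise within Sandon's framework, while correctly accounting for translated points rather than fixed points, is the delicate technical ingredient of the proof.
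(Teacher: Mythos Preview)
The paper does not prove this proposition at all: it is stated in the preliminaries as a summary of properties established in Sandon's papers \cite{San1,San2,San15}, with the reader referred there for details. So there is no ``paper's own proof'' to compare against; your proposal is essentially an attempt to reconstruct Sandon's arguments.

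Your outline for part~(2) via Lusternik--Schnirelmann theory and the generating-function/translated-point correspondence is exactly the standard route and is fine. For part~(1), however, two of your steps are problematic. First, the inequalities $c^-(\phi)\le 0\le c^+(\phi)$ are not obtained by ``continuity along an isotopy'': continuity alone gives no sign information. The actual argument uses that $\phi$ is \emph{compactly supported}, so outside a compact set the generating function coincides with a pure quadratic form; this forces the sublevel set $\{F_\phi\le 0^+\}$ to already carry the image of the unit class (hence $c(1)\le 0$) and the superlevel set to obstruct the top class (hence $c(\mu)\ge 0$). Second, your proposed non-degeneracy argument via a displacement--energy inequality is circular in this framework: the energy--capacity type bounds for $c^+-c^-$ are themselves consequences of the spectral-invariant package, not inputs to it. The way Sandon (following Viterbo) proves $c^-(\phi)=c^+(\phi)\Rightarrow\phi=id$ is again Lusternik--Schnirelmann: since the unit and top classes differ by cup product with a class of positive degree, equality $c(1)=c(\mu)$ forces the critical set at that common level to carry the full cohomology of the base, which in turn forces the Legendrian graph to coincide with the zero section, i.e.\ $\phi=id$. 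You should replace the displacement argument by this cup-length argument.
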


The key deficiency of $c^-,c^+$ lies in the fact that they are not conjugation-invariant. This is due to the fact that translated points do not persist under cojugation. However, if the action of a translated point is an integer, by definition this translated point is a fixed point (note that by convention $S^1$ has length 1). Fixed points do persists under conjugation, namely if $x$ is a fixed point of $\phi$ then $\psi(x)$ is a fixed point of $\psi \phi \psi^{-1}.$ Elaborating on these observations one comes to the conclusion that, while conjugation might change $c^{\pm}$, it still holds that
$$\lfloor c^\pm(\phi) \rfloor \leq c^\pm(\psi\phi \psi^{-1})  \leq  \lceil c^\pm(\phi) \rceil ,$$
where $\lfloor \cdot \rfloor , \lceil \cdot \rceil $ denote the lower and upper integer parts.  Thus, insted of $c^-,c+$, we wish to use invariants $\lfloor c^-(\cdot) \rfloor , \lceil c^+(\cdot) \rceil .$ Their properties are summarized in the following proposition.

\begin{prop}\label{prop: C_integer} For all $\phi,\psi \in \Contoc(\R^{2n}\times S^1)$ it holds
\begin{itemize}
\item[1)] $\lfloor c^-(\psi \phi \psi^{-1}) \rfloor=\lfloor c^-(\phi) \rfloor$ and $\lceil c^+(\psi \phi \psi^{-1}) \rceil= \lceil c^+(\phi) \rceil$;
\item[2)] $\lfloor c^-(\phi^{-1}) \rfloor= -\lceil c^+(\phi) \rceil$;
\item[3)] $\lceil c^+(\psi) \rceil \geq \lceil c^+(\phi) - c^+(\phi \psi^{-1}) \rceil \geq \lceil c^+(\phi)\rceil - \lceil c^+(\phi \psi^{-1}) \rceil $.
\end{itemize}
\end{prop}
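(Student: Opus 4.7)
The plan for (1) is to exploit the inequality
$$\lfloor c^\pm(\phi)\rfloor \;\leq\; c^\pm(\psi\phi\psi^{-1}) \;\leq\; \lceil c^\pm(\phi)\rceil$$
highlighted in the paragraph preceding the proposition. Taking $\lfloor\cdot\rfloor$ through this chain yields $\lfloor c^-(\phi)\rfloor \leq \lfloor c^-(\psi\phi\psi^{-1})\rfloor$. Substituting $\psi\phi\psi^{-1}$ for $\phi$ and $\psi^{-1}$ for $\psi$ in the same bound gives $\lfloor c^-(\psi\phi\psi^{-1})\rfloor \leq c^-(\phi)$, and since the left-hand side is already an integer, this upgrades to $\lfloor c^-(\psi\phi\psi^{-1})\rfloor \leq \lfloor c^-(\phi)\rfloor$. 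The two directions together establish the floor statement, and the ceiling statement for $c^+$ is obtained by the identical argument.

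Part (2) will follow from the duality property $c^-(\phi^{-1}) = -c^+(\phi)$ for Sandon's spectral invariants (the contact analogue of property 3 of Theorem \ref{Thm_Viterbo_Basic}), combined with the elementary identity $\lfloor -x\rfloor = -\lceil x\rceil$ valid for every real $x$.

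Part (3) splits into two inequalities. The first is a direct consequence of the triangle inequality $c^+(\phi) \leq c^+(\phi\psi^{-1}) + c^+(\psi)$ for Sandon's invariants: rearranging gives $c^+(\psi) \geq c^+(\phi) - c^+(\phi\psi^{-1})$, and since $\lceil c^+(\psi)\rceil$ is an integer upper bound for the right-hand side, one obtains $\lceil c^+(\psi)\rceil \geq \lceil c^+(\phi) - c^+(\phi\psi^{-1})\rceil$. The second inequality is the purely arithmetic fact that $\lceil a-b\rceil + \lceil b\rceil \geq \lceil a\rceil$ for all real $a,b$ (both sides differ by an integer, and $(a-b)+b = a$), applied with $a = c^+(\phi)$ and $b = c^+(\phi\psi^{-1})$.

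The substantive content is really packaged into (1): the burden is discharged to the bound recalled above, whose justification requires tracking how the min-max critical values of the filtered generating-function homology move as $\phi$ is conjugated along an isotopy from $id$ to $\psi$, and observing that any spectral value crossing an integer would have to correspond to a bifurcation involving fixed points, which are stable under conjugation. Given that input, properties (2) and (3) are short bookkeeping that combine standard duality and subadditivity for Viterbo-type spectral invariants with elementary identities for $\lfloor\cdot\rfloor$ and $\lceil\cdot\rceil$.
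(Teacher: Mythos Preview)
The paper does not supply its own proof of this proposition; it is stated as a review of Sandon's results with reference to \cite{San1,San2,San15}, the only argument given in the text being the heuristic for the two-sided bound $\lfloor c^\pm(\phi)\rfloor \leq c^\pm(\psi\phi\psi^{-1}) \leq \lceil c^\pm(\phi)\rceil$ immediately preceding the statement. Your derivation is correct and follows the standard route taken in those references: part (1) from that bound via the symmetry swap $(\phi,\psi)\mapsto(\psi\phi\psi^{-1},\psi^{-1})$, and parts (2), (3) from the duality $c^-(\phi^{-1})=-c^+(\phi)$ and subadditivity $c^+(\phi\psi)\leq c^+(\phi)+c^+(\psi)$ of Sandon's raw invariants combined with elementary floor/ceiling arithmetic.
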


\begin{dfn}
The spectral norm $\gamma:\Contoc(\R^{2n}\times S^1) \to \Z$ is defined as 
$$\gamma(\phi)=\lceil c^+(\phi) \rceil - \lfloor c^-(\phi) \rfloor.$$
\end{dfn}

\begin{thm}\label{thm:Gamma_Basic}
$\gamma$ is a conjugation-invariant norm on $\Contoc(\R^{2n}\times S^1).$ Moreover, it is unbounded.
\end{thm}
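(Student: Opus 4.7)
The statement splits into two independent parts: that $\gamma$ satisfies the axioms of a conjugation-invariant norm, and that it is unbounded. The first part is a formal consequence of Propositions \ref{prop:C_non_integer} and \ref{prop: C_integer}; the second requires exhibiting an explicit sequence $\phi_n$ with $\gamma(\phi_n)\to\infty$, for which the natural candidates are compactly supported approximations of the Reeb flow.

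For the norm axioms: non-negativity follows from Proposition \ref{prop:C_non_integer}(1), since $c^-(\phi)\le 0\le c^+(\phi)$ forces $\lfloor c^-(\phi)\rfloor\le 0\le \lceil c^+(\phi)\rceil$. If $\gamma(\phi)=0$ then both integer parts vanish, so $c^-(\phi)=c^+(\phi)=0$ and hence $\phi=id$ by the same proposition. Conjugation-invariance is Proposition \ref{prop: C_integer}(1) applied to both summands. Symmetry follows by applying Proposition \ref{prop: C_integer}(2) to $\phi$ and to $\phi^{-1}$, yielding the twin identities $\lfloor c^-(\phi^{-1})\rfloor=-\lceil c^+(\phi)\rceil$ and $\lceil c^+(\phi^{-1})\rceil=-\lfloor c^-(\phi)\rfloor$, whose difference gives $\gamma(\phi^{-1})=\gamma(\phi)$. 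The triangle inequality is extracted from Proposition \ref{prop: C_integer}(3): substituting $\phi\mapsto\phi\psi$ there gives $\lceil c^+(\phi\psi)\rceil\le \lceil c^+(\phi)\rceil+\lceil c^+(\psi)\rceil$; applying the same inequality to $(\phi\psi)^{-1}=\psi^{-1}\phi^{-1}$ and combining with the inversion identity produces the companion bound $\lfloor c^-(\phi\psi)\rfloor\ge \lfloor c^-(\phi)\rfloor+\lfloor c^-(\psi)\rfloor$. Subtracting yields $\gamma(\phi\psi)\le \gamma(\phi)+\gamma(\psi)$.

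For unboundedness, I would build $\phi_n$ by cutting off a positive constant contact Hamiltonian. Fix a relatively compact open set $U\subset \R^{2n}\times S^1$ and a larger ball $B\supset\overline{U}$, and let $\rho$ be a compactly supported function equal to $1$ on $U$ and supported in $B$. For each integer $n\ge 1$ take $H^{(n)}_t=n\rho$; the solution of (\ref{form: generating a contact isotopy}) on $U$ reduces to $X_{H^{(n)}}=nR_{\alpha_0}$, so the time-one flow $\phi_n$ lies in $\Contoc(\R^{2n}\times S^1)$ and agrees with the Reeb translation $\Phi^{\alpha_0}_n$ on $U$. Every point $z\in U$ is then a translated point of $\phi_n$ with Reeb parameter $n$, and a direct computation, justified by Lemma \ref{Lemma:TP_Action}, gives $\mathcal{A}_{\alpha_0}(z)=n$. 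In particular $n\in Spec_{\alpha_0}(\phi_n)$.

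The main obstacle is upgrading this spectrality to a genuine growth statement on $c^+$, since Proposition \ref{prop:C_non_integer}(2) only guarantees that $c^\pm(\phi_n)$ lies \emph{somewhere} in the action spectrum, which also contains $0$. To close this gap I would invoke the standard monotonicity/normalization property of Sandon's spectral invariants at the level of the min-max construction: if $\phi$ is generated by a contact Hamiltonian satisfying $H_t\ge c$ pointwise on a large enough region, then $c^+(\phi)\ge c$, because the fundamental class in the filtered generating function homology survives up to level $c$. Applied to $H^{(n)}_t=n\rho$, this gives $\lceil c^+(\phi_n)\rceil\ge n$, hence $\gamma(\phi_n)\ge n\to\infty$. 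This step is the technical heart of the unboundedness claim and is the only place where one genuinely needs the construction of $c^\pm$ beyond the formal properties listed in Section \ref{Subsection:Sandon_Basics}.
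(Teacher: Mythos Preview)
Your derivation of the norm axioms from Propositions \ref{prop:C_non_integer} and \ref{prop: C_integer} is correct and matches the paper, which simply asserts that the first part is ``a direct consequence'' of those two propositions.

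For unboundedness the paper gives no argument of its own; it cites \cite[Section 6]{San2}. Sandon's proof does not go through a cut-off Reeb flow plus a monotonicity estimate. Instead it uses the lift construction and the identity $c^{\pm}(\widehat{\phi})=c^{\pm}_{Ham}(\phi)$ (equation (\ref{eq:Spec_Lifts}) in the paper): one exhibits $\phi\in\Ham_c(\R^{2n})$ with arbitrarily large $c^+_{Ham}(\phi)$, and the lift $\widehat{\phi}$ inherits the same value. The present paper even contains a self-contained version of the Hamiltonian input (Lemma \ref{lem: unboundedness of c}), so (\ref{eq:Spec_Lifts}) together with that lemma already gives $\gamma(\widehat{\phi^H_1})\ge \lceil a\rceil$ for any $a>0$.

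Your approach has a genuine gap at the step you yourself flag. The ``monotonicity'' principle you invoke --- that $H_t\ge c$ on a ``large enough region'' forces $c^+(\phi)\ge c$ --- is not a result stated anywhere in Section \ref{Subsection:Sandon_Basics}, and as phrased it is not true without further hypotheses: if $\rho$ is supported in a Darboux ball then $\phi_n$ is supported in that same ball, so $id\in\conjH(\phi_n)$ by Lemma \ref{lem: closure of the conjugacy class in the contact group} and hence $\gamma(\phi_n)\le 2$ by Theorem \ref{Gamma_Is_Locally_Bounded}, regardless of $n$. To avoid this you must take $U$ of the form $V\times S^1$ and $\rho=\rho(x,y)$; but then your $\phi_n$ is exactly the lift $\widehat{\phi^{n\rho}_1}$, the ``Reeb translation $\Phi^{\alpha_0}_n$'' is the identity on $U$, and the missing estimate $c^+(\phi_n)\ge n$ becomes precisely the Hamiltonian statement $c^+_{Ham}(\phi^{n\rho}_1)\ge n$. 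That is not a formal monotonicity fact; it requires controlling the full action spectrum of $n\rho$, which is exactly what Lemma \ref{lem: unboundedness of c} does by a careful choice of the profile. In short, once your construction is made precise it collapses to Sandon's lift argument, and the work you defer to ``the min-max construction'' is really the Hamiltonian computation already available in the paper.
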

The first part of Theorem \ref{thm:Gamma_Basic} is a direct consequence of Propositions \ref{prop:C_non_integer} and \ref{prop: C_integer}. The proof that $\gamma$ is unbounded can be found in \cite[Section 6]{San2}.

Lastly, let us show how to compute invariants $c^-,c^+$ for a very special class of contactomorphisms. Let $\phi \in \Ham_c(\R^{2n}).$\textit{ The lift of} $\phi$ is a contactomorphism $\widehat{\phi}\in \Contoc$ defined by
\begin{equation}\label{Equation:Lift}
\widehat{\phi}(q,z)=(\phi(z),z+A_\phi(q) \mod 1),
\end{equation}
where $A_\phi(q)$ is given by (\ref{eq:Action_Points}). It is generated by the contact Hamiltonian $\widehat{H}_t(q,z)=H_t(q).$ In \cite[Proposition 3.18]{San1} it was proven that
\begin{equation}\label{eq:Spec_Lifts}
c^{\pm}(\widehat{\phi} )= c^{\pm}_{Ham} (\phi).
\end{equation}

We will need the following lemma:

\begin{lemma}\label{Lemma:Lifts_Computations}
Let $H:\R^{2n} \to \R$ be an autonomous compactly supported Hamiltonian and $\phi \in \Ham_c(\R^{2n})$ the time-1 map generated by $H.$ If $H$ is sufficiently $C^2$-small then
$$c^-(\widehat{\phi})=\min H,~ c^+(\widehat{\phi})=\max H.$$
\end{lemma}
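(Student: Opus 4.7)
\medskip
\noindent\textbf{Proof plan.} The plan is to reduce the statement to a classical fact about Viterbo's spectral invariants for $C^2$-small autonomous Hamiltonians on $\R^{2n}$, and then to verify that fact by combining the explicit action formula \eqref{eq:Action_Points} with the spectrality and Hofer-continuity properties recorded in Theorem \ref{Thm_Viterbo_Basic}.

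First, by the identity \eqref{eq:Spec_Lifts} recalled just before the lemma, one has $c^{\pm}(\widehat{\phi})=c^{\pm}_{Ham}(\phi)$, so it suffices to prove that $c^-_{Ham}(\phi)=\min H$ and $c^+_{Ham}(\phi)=\max H$. Next, I would compute the action at any critical point $q$ of $H$. Since $H$ is autonomous, $\phi^H_t(q)=q$ for all $t$, so $\dot{\phi}^H_t(q)=0$, and \eqref{eq:Action_Points} gives $A_\phi(q)=\int_0^1 H(q)\,dt=H(q)$. For $H$ sufficiently $C^2$-small, $\phi$ is $C^1$-close to the identity and the equation $\phi(q)=q$ is a small perturbation of $dH(q)=0$; the implicit function theorem then identifies $\mathrm{Fix}(\phi)$ with $\mathrm{Crit}(H)$. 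Together with Theorem \ref{Thm_Viterbo_Basic}(1), this forces
\[
  c^{\pm}_{Ham}(\phi)\in Spec(\phi)=\{H(q)\mid q\in \mathrm{Crit}(H)\}\subseteq [\min H,\max H].
\]

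The remaining step is to pin down exactly which critical value is selected. The clean route is the classical min-max description of Viterbo's invariants: for a $C^2$-small autonomous compactly supported $H$, the graph of $\phi$ admits a generating function quadratic at infinity which is itself a small perturbation of (a stabilization of) $H$, and the min-max associated to the fundamental class gives $\max H$ while the min-max associated to the point class gives $\min H$. Alternatively, one can rescale by $s\in[0,1]$ and use continuity: $c^{\pm}_{Ham}(\phi^{sH})$ depends continuously on $s$ by Theorem \ref{Thm_Viterbo_Basic}(6) (the Hofer norm of $\phi^{sH}$ tends to $0$ as $s\to 0$), takes values in the discrete set $s\cdot\{H(q):q\in\mathrm{Crit}(H)\}$, and satisfies $c^-\le 0\le c^+$ with $c^-_{Ham}(\phi^{-1})=-c^+_{Ham}(\phi)$; these rigidity constraints, applied along the path, force $c^+=\max H$ and $c^-=\min H$ at $s=1$.

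The main obstacle I expect is step three, namely identifying \emph{which} critical value of $H$ is selected by $c^\pm$. The spectrality and Hofer-continuity properties alone only locate $c^{\pm}$ in $[\min H,\max H]$; producing the equality with $\max H$ and $\min H$ really requires input from the construction of the invariants (the role of the fundamental versus the point class). In the write-up, rather than reproducing the generating-function machinery of \cite{Vit92}, I would cite this selection principle as a well-known computation for $C^2$-small autonomous Hamiltonians and restrict the proof to the reduction via \eqref{eq:Spec_Lifts} and the action computation above.
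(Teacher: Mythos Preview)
Your reduction via \eqref{eq:Spec_Lifts} and the action computation at critical points are exactly what the paper does. The paper also agrees that for $C^2$-small $H$ the fixed points of $\phi$ coincide with $\mathrm{Crit}(H)$, though the clean justification is not the implicit function theorem (which is awkward when $H$ is not Morse) but the elementary ODE estimate: $\|H\|_{C^2}$ small makes $\|X_H\|_{C^1}$ small, so any $1$-periodic orbit must be constant.

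Where you and the paper diverge is the selection step. The paper's argument is sharper than either of your alternatives: for $C^2$-small $H$, the action function $A_\phi:\R^{2n}\to\R$ is \emph{itself} a compactly supported generating function for $\phi$ with no ghost variables at all (this is \cite[Lemma~9.1.4]{McDS17}). With zero fibre variables, the min--max definition of $c^{\pm}_{Ham}$ collapses to $c^-_{Ham}(\phi)=\min A_\phi$ and $c^+_{Ham}(\phi)=\max A_\phi$. Since $A_\phi$ attains its extrema on $\mathrm{Fix}(\phi)=\mathrm{Crit}(H)$ and $A_\phi(q)=H(q)$ there, the result follows. Your first alternative (``a small perturbation of a stabilization of $H$'') is gesturing at this, but the point is that no stabilization is needed and no perturbation analysis is required; the identification is exact and the min--max is literally $\min$ and $\max$.

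Your second alternative, the rescaling argument, has a genuine gap. Continuity of $s\mapsto c^+_{Ham}(\phi^{sH})$ together with spectrality only tells you that $c^+_{Ham}(\phi^{sH})/s$ lies in the set of critical values of $H$; the constraints $c^-\le 0\le c^+$ and $c^-(\phi^{-1})=-c^+(\phi)$ do \emph{not} single out $\max H$ among the nonnegative critical values. For instance, if $\mathrm{CritVal}(H)=\{0,1,2\}$ there is nothing in your list that rules out $c^+_{Ham}(\phi^{sH})=s$ for all $s$. Some form of the generating-function input is unavoidable here, and since the paper's version of it is a one-line citation plus the observation about zero ghost variables, I would recommend using that rather than deferring to an unspecified ``well-known computation''.
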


\begin{proof}
 By (\ref{eq:Spec_Lifts}) it is enough to show that $c^-_{Ham}(\phi)=\min H$, $c^+_{Ham}(\phi)=\max H.$ For sufficiently $C^2$-small $H$, $A_\phi : \R^{2n}\to \R$ is a compactly supported generating function for $\phi$ without ghost variables, see \cite[Lemma 9.1.4]{McDS17}.  Since there are no ghost variables, by definition $c^-_{Ham}(\phi)=\min A_\phi$ and $c^+_{Ham}(\phi)=\max A_\phi.$ On the other hand, $\min A_\phi$ and $\max A_\phi$ are minimal and maximal actions of fixed points of $\phi.$ Finally, for sufficiently $C^2$-small $H$, the only fixed points of $\phi$ are critical points of $H$. Each such fixed point is fixed by the whole isotopy $\phi_t$ and its action is equal to the value of $H$ at that point. The claim follows.
\end{proof}

\subsection{Spectral invariant of Albers and Merry}\label{Section: Albers_Merry_Basics}

Let $(W,d\lambda)$ be a Liouville manifold such that $c_1(W)=0$ and $W\times S^1$ its prequantization space. In \cite{AM18} Albers and Merry defined a spectral invariant $c_{AM}:\ContocU(W\times S^1)\to \R$ (in notation of \cite{AM18}, $c_{AM}(\cdot)=-c(\cdot)=-c(\cdot,\mu_\Sigma)$). This invariant is analogous to $c^+$ invariant of Sandon, see Section \ref{Subsection:Sandon_Basics}, and it has similar properties, see \cite{AM18} for details. A natural candidate for the spectral norm on $\ContocU(W\times S^1)$ is given by
$$\gamma_{AM}'(\widetilde{\phi})= | \lceil c_{AM}(\widetilde{\phi}) \rceil   | + | \lceil c_{AM}(\widetilde{\phi}^{-1}) \rceil  |,$$
while a natural candidate\footnote{In fact, $\gamma_{AM}'$ and $\gamma_{AM}$ have been defined in the original preprint of \cite{AM18}.} for the spectral norm on $\Contoc(W\times S^1)$ is given by
$$\gamma_{AM}(\phi)= \inf  \{ \gamma_{AM}'(\widetilde{\phi}) ~|~ \widetilde{\phi}_1 = \phi \}.$$
As we mentioned in the introduction, the missing ingredient to make $\gamma_{AM}$ a conjugation-invariant norm is the triangle inequality, i.e. an analogue of 3) from Proposition \ref{prop: C_integer}. 

The only property we will need to prove Theorem \ref{Thm: AM_Continuity} is the fact that $c_{AM}(\widetilde{\phi})\in Spec_\alpha (\phi).$ As stated, this is not proven in \cite{AM18}, since there is an ambiguity in the definition of the spectrum we used and the definition of the spectrum in \cite{AM18}. In order to resolve this, we first recall the definition from \cite{AM18}.

Let $\phi \in \Contoc(W\times S^1)$ and $\{ \phi_t \}, t\in [0,1]$ such that $\phi_0=id,\phi_1=\phi.$ Let $z\in W\times S^1$ be a translated point of $\phi$ and $\tau \in \R$ such that $\Phi^\alpha_{-\tau} (\phi(z))=z.$ Note that, since the Reeb flow $\Phi^\alpha$ is 1-periodic, the set of all such $\tau$ is $\{ \mathcal{A}_\alpha(z)+\Z \}.$ Let $P$ be a torus with a disc removed and note that $W\times S^1=W\times \partial P\subset W\times P.$ We say that the pair $(z,\tau)$ is \textit{contractible} if the loop
$$l_z(t)= \begin{cases}
      \phi_{2t}(z), & \text{for}\ 0\leq t \leq 1/2 \\
      \Phi^{\alpha}_{(1-2t)\tau}(\phi(z)), & \text{for}\ 1/2 \leq t \leq 1 \\
      \end{cases}$$
is contractible in $W\times P.$ The spectrum considered in \cite{AM18} is given by
$$Spec_{\alpha}'(\phi)=\{ \tau ~|~ (z,\tau) \text{ is contractible} \}.$$
This definition comes from the fact that contractible $(z,\tau)$ are critical points of the Rabinowitz action functional.  This functional is defined on the space of contractible loops in $W\times \overline{P}$. where $\overline{P}$ denotes the completion of $(P,d\lambda_P), \lambda_P|_{\partial P}=d\theta.$ From this viewpoint, $W\times \overline{P}$ is seen as symplectization of $W\times S^1$ with the negative end compactified by $P.$

\begin{lemma}\label{Lemma:Actions_vs_Actions}
For all $\phi \in \Contoc(W\times S^1)$,  $Spec'_\alpha(\phi)\subset Spec_\alpha(\phi).$ In particular, for all $\widetilde{\phi} \in \ContocU(W\times S^1)$,  $c_{AM}(\widetilde{\phi}) \in Spec_\alpha(\phi).$
\end{lemma}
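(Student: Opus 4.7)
The ``in particular'' clause follows automatically from the main inclusion: by construction in \cite{AM18}, $c_{AM}(\widetilde{\phi})$ is a critical value of the Rabinowitz action functional on contractible loops and hence lies in $Spec'_\alpha(\phi)$. So the plan is to establish $Spec'_\alpha(\phi)\subset Spec_\alpha(\phi)$, and I will actually prove the sharper statement that a contractible pair $(z,\tau)$ forces $\tau = \mathcal{A}_\alpha(z)$ on the nose, not merely modulo $\Z$.

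First, I would compute $\int_{l_z} d\theta$ directly from the definition of $l_z$. Along the first half, the substitution $s = 2t$ yields
\[
\int_0^{1/2} d\theta(\dot{l}_z(t))\,dt \;=\; \int_0^1 d\theta(\dot{\phi}_s(z))\,ds \;=\; \mathcal{A}_\alpha(z).
\]
Along the second half, the Reeb vector field of $\alpha = \lambda + d\theta$ is $R_\alpha = \partial_\theta$, so $\dot{l}_z(t) = -2\tau\,R_\alpha$ and $d\theta(\dot{l}_z) \equiv -2\tau$, contributing $-\tau$. Hence
\[
\int_{l_z} d\theta \;=\; \mathcal{A}_\alpha(z) - \tau.
\]

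Next, I would invoke the topological fact that the inclusion induces an \emph{injection} $\pi_1(W\times S^1)\hookrightarrow \pi_1(W\times P)$. Since $P$ is a once-punctured torus, $\pi_1(P)\cong F_2$ is free on two generators and $\partial P$ represents the commutator of those generators, which has infinite order; thus $\pi_1(\partial P) = \Z \to \pi_1(P)$ is injective, and combined with the product splitting $\pi_1(W\times X) = \pi_1(W)\times\pi_1(X)$ (valid since $W$ is connected) this yields full injectivity. Therefore $l_z$ is null-homotopic in $W\times P$ if and only if it is null-homotopic in $W\times S^1$. In the latter case $[l_z]=0$ in $H_1(W\times S^1)$, so projecting onto $H_1(S^1) = \Z$ forces $\int_{l_z} d\theta = 0$; combined with the previous display, this gives $\tau = \mathcal{A}_\alpha(z) \in Spec_\alpha(\phi)$.

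The step I expect to matter most is this topological equivalence of the two notions of contractibility. A naive attempt to bound $\int_{l_z} d\theta$ by Stokes on a filling disk in $W\times\overline{P}$ using the primitive $\lambda_P$ (with $\lambda_P|_{\partial P} = d\theta$) is circular, because it only rewrites $\int_{l_z} d\theta$ as $\int_D d\lambda_P$ without producing any vanishing. The deeper obstruction is that $d\theta$ does not extend to any closed one-form on $P$: the would-be period around $\partial P$ would have to vanish in view of $[\partial P] = 0 \in H_1(P;\R)$, and it is precisely this feature that forces the argument to go through $\pi_1$ rather than $H_1$.
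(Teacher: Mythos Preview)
Your argument is correct and follows essentially the same route as the paper: both show that the inclusion $W\times S^1 = W\times\partial P \hookrightarrow W\times P$ induces an injection on (free) homotopy classes of loops, deduce that $l_z$ is already contractible in $W\times S^1$, and then integrate $d\theta$ over $l_z$ to obtain $\tau=\mathcal{A}_\alpha(z)$. The only cosmetic differences are that the paper phrases the topological step in terms of $\widetilde{\pi}_1$ (conjugacy classes in $\pi_1$) rather than $\pi_1$, and justifies injectivity via the deformation retraction $P\simeq S^1\vee S^1$ rather than your explicit identification of $[\partial P]$ with the commutator in $F_2$; these are equivalent formulations of the same fact.
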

\begin{proof}
Let $(z,\tau)\in Spec'_\alpha(\phi).$ We claim that $l_z(t)$ is contractible in $W\times S^1$ as well.  This follows from the fact that the inclusion-induced map $\widetilde{\pi}_1(W\times S^1) \to \widetilde{\pi}_1(W\times P)$ is an injection, $\widetilde{\pi}_1$ denoting the set of homotopy classes of free loops. This fact can be proven by identifying $\widetilde{\pi}_1$ with conjugacy classes in $\pi_1$ and using the fact that $P$ deformation-retracts to $S^1 \vee S^1.$ Now
$$0=\int_{S^1} l_z^*(d\theta)=\mathcal{A}_\phi(z)-\tau,$$
and the claim follows.
\end{proof}


\section{Proof of Theorems \ref{thm: Rokhlin property of the contactomorphisms of the ball},  \ref{thm:Rok_Preq} and \ref{thm:Roklin_Stabilization}}\label{Section:Rokhlin}

\subsection{Squeezing implies Rokhlin property}

Here we prove the first part of Theorem \ref{thm: Rokhlin property of the contactomorphisms of the ball}, claiming that $(\ContocH(\R^{2n+1}), \tau_{C^0})$ has Rokhlin property. 

\begin{proof}

We will construct an explicit $g\in \ContocH(\R^{2n+1})$ such that $\conjH(g)=\ContocH(\R^{2n+1}).$ Throughout the proof, we repeatedly use the following relation which holds for all $\phi,\psi \in \ContocH(\R^{2n+1}):$
\begin{equation}\label{eq:Supports}
\supp (\psi \phi \psi^{-1}) =\psi(\supp(\phi)).
\end{equation}

By Proposition \ref{Prop:separable},  $(\Contoc(\R^{2n+1}), \tau_{C^0})$ is separable and we choose a sequence $\{f_i \in \Contoc(\R^{2n+1}) \}_{i\geq 1}, $ which is dense in $(\Contoc(\R^{2n+1}), \tau_{C^0})$. Note that $\{f_i\}_{i \geq 1}$ is dense in $(\ContocH(\R^{2n+1}), \tau_{C^0})$ as well.  The Rokhlin element $g$ constitutes of disjoint copies of $\{ f_i \}_{i\geq 1}$ shrunk into smaller and smaller balls, see Figure \ref{Figure2}. We divide the proof into two parts - the construction of $g$ and the proof of the Rokhlin property.

\textbf{Construction of $g$:} For every $i\geq 1$ let $\psi_i = \Psi_{\frac{1}{2^i}}$ be the squeezing contactomorphism given by Lemma \ref{Lemma:Ball_Sqeezing}, such that $\psi_i(\supp(f_i))\subset B(\frac{1}{2^i}).$  By (\ref{eq:Supports}), we have that
\begin{equation}\label{eq:support_small}
\supp(\psi_i f_i \psi_i^{-1}) \subset B\left( \frac{1}{2^i} \right).
\end{equation}

For $t\in \R$, let $T_t:\R^{2n+1}\to \R^{2n+1}$ be a translation by $t$ in the $x_1$-direction, i.e.
$$T_t(x_1,\ldots, x_n,y_1,\ldots,y_n,z)=(x_1+t,\ldots, x_n,y_1,\ldots,y_n,z).$$
Since $T_t^*\alpha_0 = \alpha_0$, each $T_t$ is a contactomorphism, which is moreover connected to identity as $T_0=id.$ Let $H_t$ be the contact Hamiltonian which generates $T_t.$ Taking a cut-off function $\rho$, which is compactly supported and equal to 1 on $B(1000)$, we get that $\rho H_t$ generates a family $T'_t\in \Contoc(\R^{2n+1})$,  such that for $|t|\leq 100$, $T'_t=T_t$ on $B(100).$ Let $\varphi_i = T'_{2-\frac{3}{2^i}} \circ \psi_i.$ The translation factor $2-\frac{3}{2^i}$ is chosen so that $\varphi_i f_i \varphi_i^{-1}$ is supported between hyperplanes $\{x_1=2-\frac{1}{2^{i-2}} \}$ and $\{x_1=2-\frac{1}{2^{i-1}} \}.$ More precisely, by (\ref{eq:Supports})
$$\supp(\varphi_i f_i \varphi_i^{-1}) =\supp (T'_{2-\frac{3}{2^i}} \psi_i f_i \psi_i^{-1} (T'_{2-\frac{3}{2^i}})^{-1}) = T'_{2-\frac{3}{2^i}} (\supp( \psi_i f_i \psi_i^{-1} )),$$
and thus, by (\ref{eq:support_small}), $\varphi_i f_i \varphi_i^{-1}$ is supported inside a ball of radius $\frac{1}{2^i}$ centered at $2-\frac{3}{2^i},$ see Figure \ref{Figure1}.

\begin{figure}[ht]
	\begin{center}
		\includegraphics[scale=0.63]{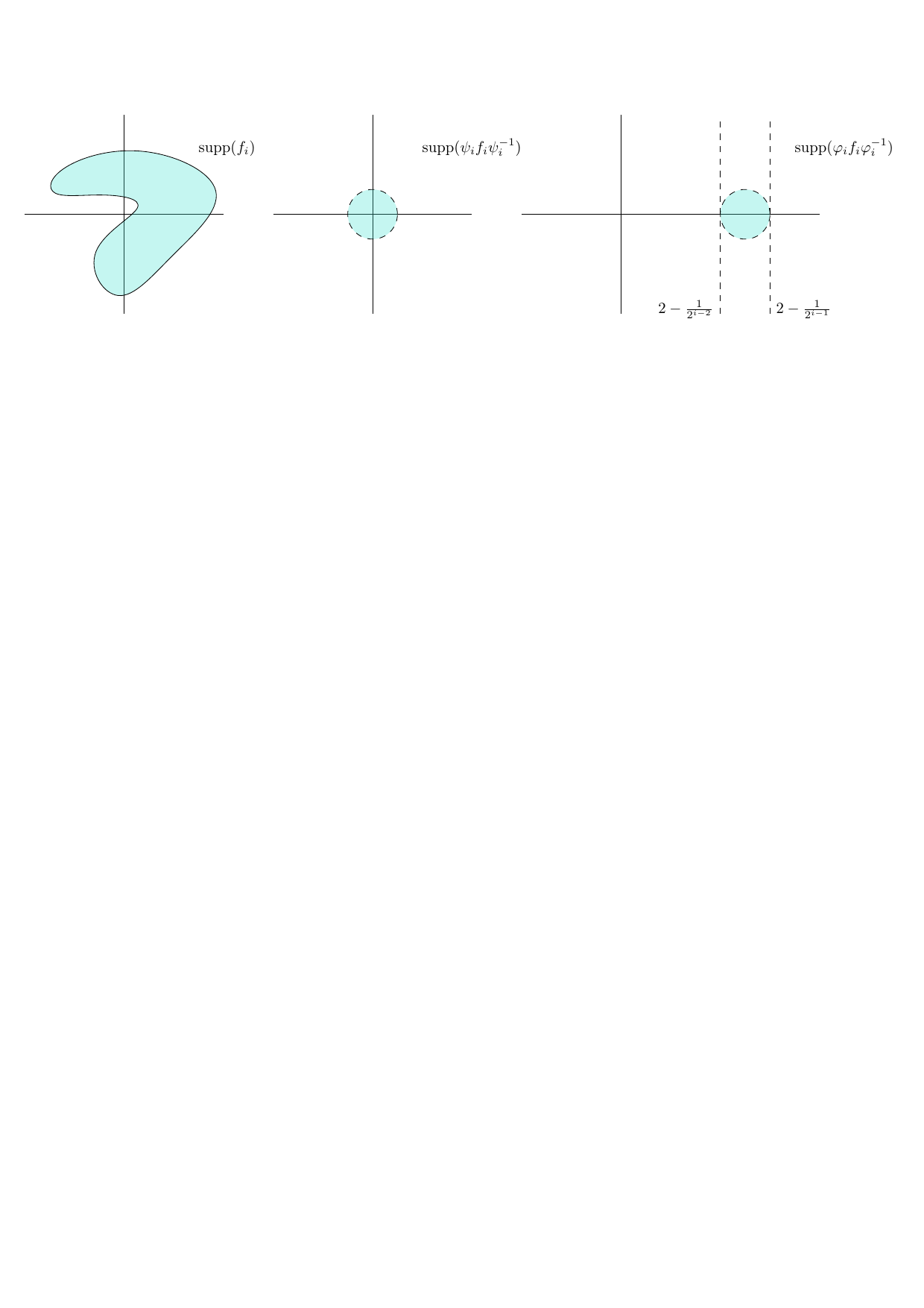}
		\caption{Supports of different conjugates}
		\label{Figure1}
	\end{center}
\end{figure}

Finally, we define $g$ as
$$g=\varphi_1 f_1 \varphi_1^{-1} \circ \varphi_2 f_2 \varphi_2^{-1} \circ \varphi_3 f_3 \varphi_3^{-1} \circ \ldots$$
Note that the supports of $\varphi_i f_i \varphi_i^{-1}$ are all disjoint and contained in $B(2).$ Thus $\supp(g)\subset B(2)$ and $g$ is a well-defined element of $\ContocH(\R^{2n+1})$ since 
$$\diam (\supp (\varphi_i f_i \varphi_i^{-1}))\leq \frac{1}{2^{i-1}}.$$
It is shown on Figure \ref{Figure2}.

\begin{figure}[ht]
	\begin{center}
		\includegraphics[scale=0.63]{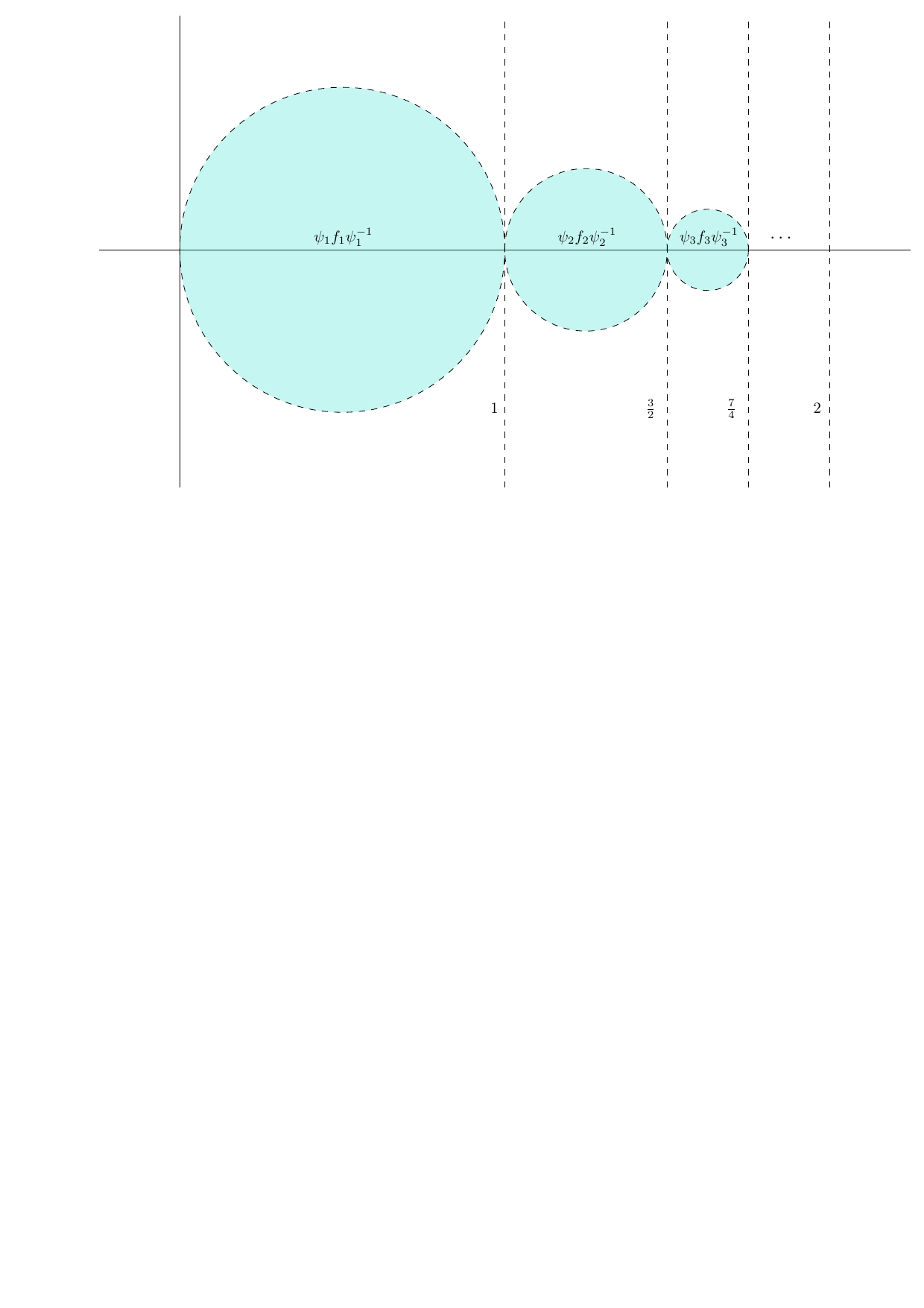}
		\caption{Rokhlin element $g$}
		\label{Figure2}
	\end{center}
\end{figure}

\textbf{The proof of Rokhlin property:} We claim that $\conjH(g)=\ContocH(\R^{2n+1}).$ By the choice of $\{f_i \}$, it is enough to prove that $f_i \in \conjH(g)$ for all $i\geq 1.$ Let us fix $i.$ Since $\varphi_i f_i \varphi_i^{-1}$ is compactly supported in the open strip $2-\frac{1}{2^{i-2}}<x_i <2-\frac{1}{2^{i-1}}$, there exists an $\varepsilon>0$ such that
$$ \supp (\varphi_i f_i \varphi_i^{-1})\subset \left\lbrace 2 -\frac{1}{2^{i-2}}+\varepsilon<x_i <2-\frac{1}{2^{i-1}}-\varepsilon \right\rbrace.$$
We wish to move the $\varphi_i f_i \varphi_i^{-1}$-part of $g$ far away from the rest of the support of $g.$ To this end, note that the Reeb flow $\Phi^{\alpha_0}_t$ is given by
$$\Phi^{\alpha_0}_t(x_1,\ldots, x_n,y_1, \ldots,y_n,z)=(x_1,\ldots, x_n,y_1, \ldots,y_n,z+t).$$
It is generated by a constant contact Hamiltonian equal to 1. We will cut it off inside a large box, which is contained in $\{2-\frac{1}{2^{i-2}}+\varepsilon<x_i <2-\frac{1}{2^{i-1}}-\varepsilon\}$ and which contains $\supp(\varphi_i f_i \varphi_i^{-1}).$ More precisely, let $\rho$ be a compactly supported function which is equal to 1 on the box $[2-\frac{1}{2^{i-2}}+\varepsilon,2-\frac{1}{2^{i-1}}-\varepsilon]\times [-1000,1000]^{2n}$ and equal to 0 whenever $x\leq 2-\frac{1}{2^{i-2}}$ or $x\geq 2-\frac{1}{2^{i-1}}.$ Denote by $\Phi'_t$ the isotopy generated by $\rho.$ For $|t|\leq 100$ it acts as $\Phi^{\alpha_0}_t$ on $\supp (\varphi_i f_i \varphi_i^{-1}).$ We let $\Phi'=\Phi'_{100}$ be the time-100 map of this isotopy. Since $\Phi'$ acts as $id$ on $\supp(\varphi_j f_j \varphi_j^{-1})$ for all $j\neq i$, we have that
$$\Phi' g (\Phi')^{-1}=\varphi_1 f_1 \varphi_1^{-1} \circ \ldots \circ \varphi_{i-1} f_{i-1} \varphi_{i-1}^{-1} \circ \Phi' \varphi_i f_i \varphi_i^{-1} (\Phi')^{-1} \circ \varphi_{i+1} f_{i+1} \varphi_{i+1}^{-1} \circ \ldots$$
The support of $\Phi' g (\Phi')^{-1}$ is shown on Figure \ref{Figure3}.

\begin{figure}[ht]
	\begin{center}
		\includegraphics[scale=0.63]{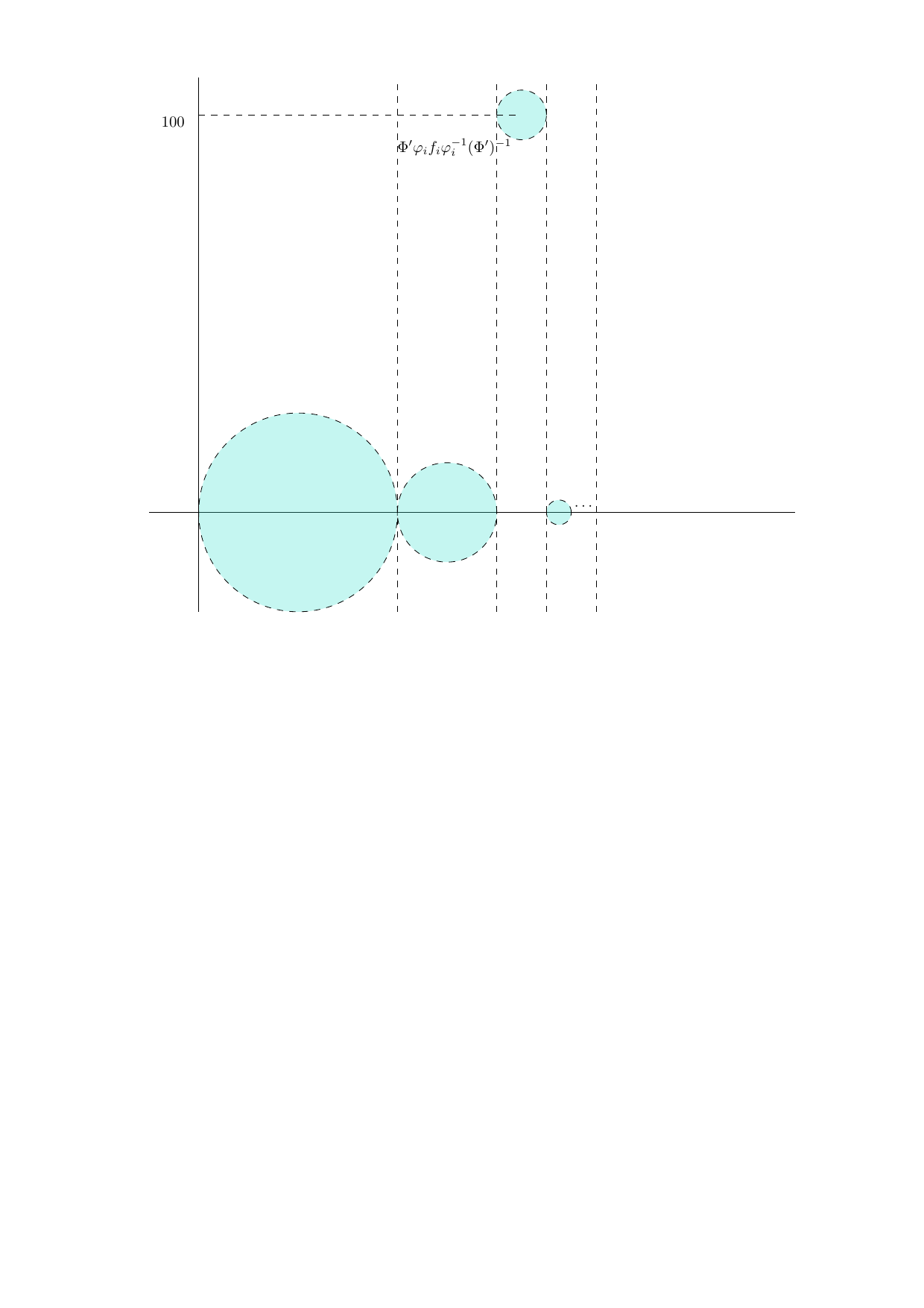}
		\caption{The support of $\Phi' g (\Phi')^{-1}$}
		\label{Figure3}
	\end{center}
\end{figure}

As a last step, we shrink all $\supp(\varphi_j f_j \varphi_j^{-1})$ for $j\neq i.$ More precisely, let $\Psi_\epsilon$ be the contactomorphism given by Lemma \ref{Lemma:Ball_Sqeezing}, supported inside $B(4)$ and such that $\Psi_\epsilon(B(3))\subset B(\epsilon).$ Since $\Psi_\epsilon$ and $\Phi' \varphi_i f_i \varphi_i^{-1} (\Phi')^{-1}$ have disjoint supports, the support of $\Psi_\epsilon \Phi' g (\Phi')^{-1} \Psi_\epsilon^{-1}$ splits into two parts - $\supp(\Phi' \varphi_i f_i \varphi_i^{-1} (\Phi')^{-1})$ and the rest, which is contained inside $B(\epsilon)$, see Figure \ref{Figure4}.

\begin{figure}[ht]
	\begin{center}
		\includegraphics[scale=0.63]{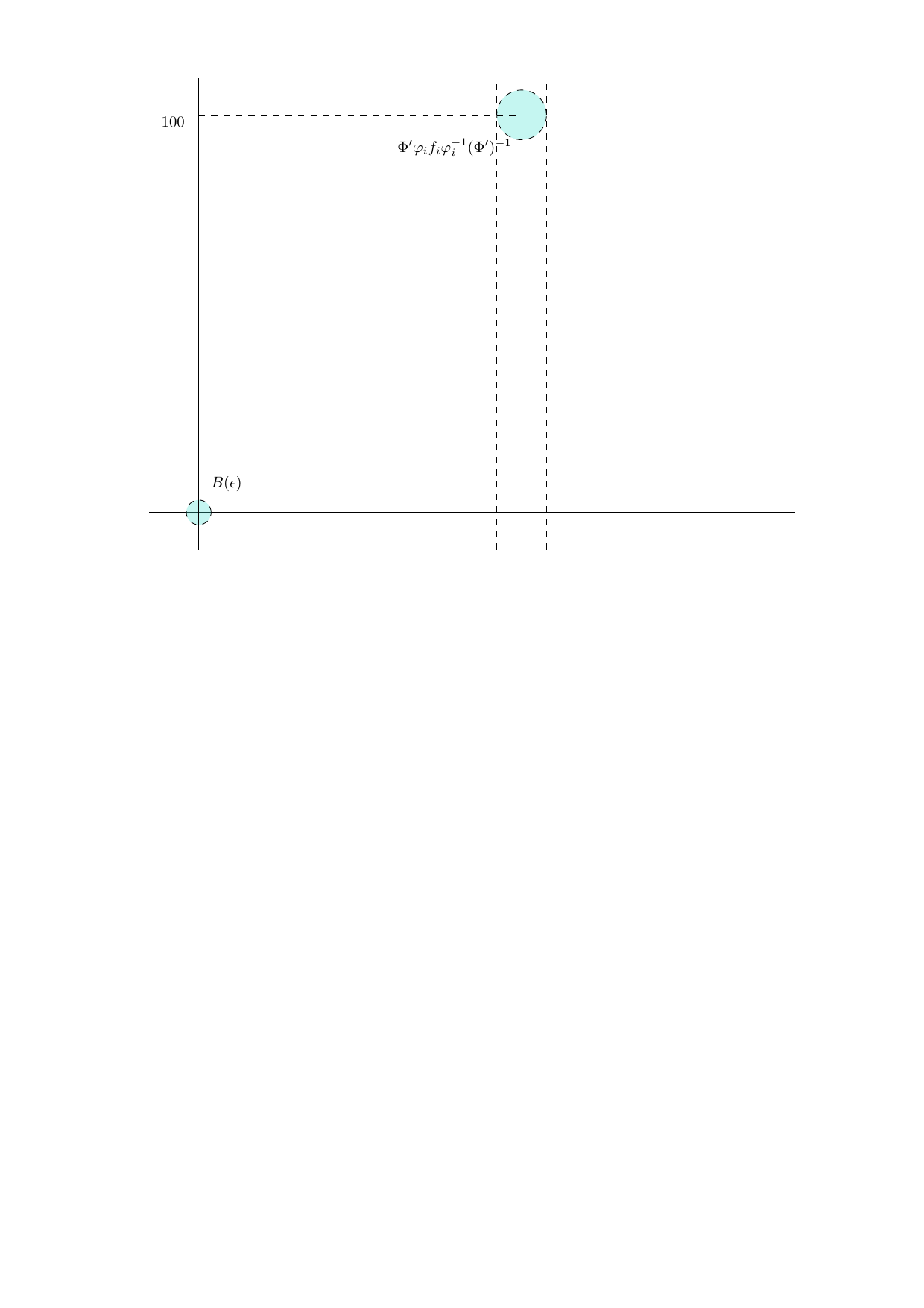}
		\caption{The support of $\Psi_\epsilon \Phi' g (\Phi')^{-1} \Psi_\epsilon^{-1}$}
		\label{Figure4}
	\end{center}
\end{figure}

Now, it readily follows that
$$d_{C^0}( \Phi' \varphi_i f_i \varphi_i^{-1} (\Phi')^{-1}, \Psi_\epsilon \Phi' g (\Phi')^{-1} \Psi_\epsilon^{-1})<2\epsilon.$$
Letting $\epsilon \to 0$ we conclude that $\Phi' \varphi_i f_i \varphi_i^{-1} (\Phi')^{-1} \in \conjH(g). $ Since $\conjH(g)$ is conjugation-invariant, $f_i\in \conjH(g)$, which finishes the proof.
\end{proof}

\subsection{Non-squeezing implies no Rokhlin property}

First we prove the second part of Theorem \ref{thm: Rokhlin property of the contactomorphisms of the ball} claiming that $(\ContocH(\R^{2n}\times S^1), \tau_{C^0})$ does not have Rokhlin property and then we prove Theorems \ref{thm:Rok_Preq} and \ref{thm:Roklin_Stabilization}.

\begin{proof}[Proof of the second part of Theorem \ref{thm: Rokhlin property of the contactomorphisms of the ball}]
First we observe that contact non-squeezing holds for elements of $\ContocH(\R^{2n}\times S^1).$ Indeed, assume that $1\leq \pi r^2< \pi R^2$ and $f\in \ContocH(\R^{2n}\times S^1)$ such that $f(B(R)\times S^1)\subset B(r)\times S^1.$ Then, for every $\varepsilon>0$ there exists $f_\varepsilon\in \Contoc(\R^{2n}\times S^1)$ such that $f_\varepsilon(B(R)\times S^1)\subset B(r+\varepsilon)\times S^1.$ Taking $\varepsilon<R-r$ violates contact non-squeezing in $\Contoc(\R^{2n}\times S^1).$

Now, we argue by contradiction. Assume that $g\in \ContocH(\R^{2n}\times S^1)$ is such that $\conjH(g)= \ContocH(\R^{2n}\times S^1)$ and $\supp(g)\subset B(r)\times S^1$.  Let $f \in \ContocH(\R^{2n}\times S^1)$ be such that $f(x)\neq x$ for all $x\in  \overline{B}(r+1)\times S^1.$ To construct such an $f$ it is enough to take a compactly supported Hamiltonian equal to $\frac{1}{2}$ in an open neighbourhood of $\overline{B}(r+1)\times S^1.$ This Hamiltonian generates a half-rotation on $\overline{B}(r+1)\times S^1.$ We claim that $f\notin \conjH(g).$ Indeed, assume there exists a sequence $h_n \in \ContocH(\R^{2n}\times S^1)$ such that $h_n g h_n^{-1}\to f.$ We observe that
$$B(r+1)\times S^1 \not\subset \supp (h_n g h_n^{-1}) = h_n(\supp(g)), $$
since otherwise 
$$h_n^{-1}( B(r+1)\times S^1 ) \subset \supp(g) \subset B(r)\times S^1, $$
contradicting contact non-squeezing - Theorem \ref{Thm:Non-squeezing_Original}. Thus, there exist $x_n\in B(r+1)\times S^1 $ such that $h_n g h_n^{-1}(x_n)=x_n.$ Passing to a subsequence, we may assume that $x_n\to x \in \overline{B}(r+1)\times S^1.$ We obtain that
$$f(x)=\lim_{n\to \infty} h_n (g (h_n^{-1}(x_n))) = \lim_{n\to \infty}  x_n =x,$$
which contradicts the choice of $f.$
\end{proof}

\begin{proof}[Proof of Theorem \ref{thm:Rok_Preq}]
Let $W_0$ be the Liouville domain whose completion is $W$ and $\mathring{W}_0$ its interior. Since $SH(W)=0$, by \cite[Theorem 1.3]{BK22} we have that $0<c_{sp}(\mathring{W}_0)< +\infty$ and we may consider $U=\frac{1}{c_{sp}(\mathring{W}_0)} \mathring{W}_0$ whose symplectic capacity is 1. Rescalings are diffeomorphisms and thus $rU$ has compact closure for any $r>0.$ Now, we repeat the argument from the above proof, substituting $B(r)\times S^1$ with $(rU)\times S^1$ and using Theorem \ref{Thm:Non-squeezing_AM} instead of Theorem \ref{Thm:Non-squeezing_Original}.
\end{proof}

\begin{proof}[Proof of Theorem \ref{thm:Roklin_Stabilization}]
Assume that the conjugacy class of $g\in \ContocH(W\times \R^{2m} \times S^1)$ is dense and let $r_0,R>0$ be such that $\supp(g) \subset (R \mathring{W}_0) \times B(r_0) \times S^1.$ Since $c_{HZ}(R \mathring{W_0})=R \cdot c_{HZ}( \mathring{W_0})<+\infty,$ we may apply Theorem \ref{Thm:Non-squeezing_Stabilization} by taking $R$ sufficiently large so that $\lceil \pi r_0^2 \rceil < \lceil c_{HZ}(R\mathring{W}_0) \rceil.$ We conclude that for $r_1\geq \sqrt{\frac{c_{HZ}(R\mathring{W}_0)}{\pi}} +1$, $(R \mathring{W}_0) \times B(r_1) \times S^1$ cannot be squeezed into $(R \mathring{W}_0) \times B(r_0) \times S^1.$ Taking $f$ which moves every point of $(R W_0) \times \overline{B}(r_1) \times S^1$ shows that $f\notin \conjH(g)$ by the same argument as in the two proofs above.
\end{proof}


\section{$C^0$-continuity of the contact action spectrum}

We wish to prove Theorems \ref{Gamma_Is_Locally_Bounded} and  \ref{Thm: AM_Continuity}. To this end, we will show a general $C^0$-continuity result about the contact action spectrum.

\subsection{Action spectrum continuity} Let $(W\,d\lambda)$ be an exact symplectic manifold and $W\times S^1$ its prequantization space. Let $\pi: W\times S^1 \to S^1$ be the projection and assume $S^1=\R / \Z$ is equipped with the standard flat metric $ g_{std}$ induced from $\R.$

\begin{prop}\label{Thm:C0-continuity}
 If $\phi \in \Contoc(W\times S^1)$ is such that $d_{C^0} (\pi,  \pi \circ \phi) < \frac{1}{2}$, then it holds
$$Spec_\alpha(\phi)\subset\left[-d_{C^0} (\pi,  \pi \circ \phi) , d_{C^0} (\pi,  \pi \circ \phi)\right].$$
\end{prop}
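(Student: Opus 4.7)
The plan is to work in the universal cover $p: W \times \R \to W \times S^1$ that unwinds the $S^1$ direction. Given $\phi \in \Contoc(W \times S^1)$, I choose any compactly supported contact isotopy $\{\phi_t\}_{t \in [0,1]}$ with $\phi_0 = id$, $\phi_1 = \phi$, and lift it to the unique isotopy $\{\tilde{\phi}_t\}$ of $W \times \R$ starting at the identity. The endpoint $\tilde{\phi} := \tilde{\phi}_1$ commutes with the deck translation by $1$ in the $\R$ factor, so the vertical displacement
$$f(x) := \tilde{\pi}(\tilde{\phi}(\tilde{x})) - \tilde{\pi}(\tilde{x}), \qquad \tilde{x} \in p^{-1}(x),$$
with $\tilde{\pi}: W \times \R \to \R$ the projection, is independent of the choice of lift $\tilde{x}$ and descends to a continuous function $f: W \times S^1 \to \R$.

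I would then establish two properties of $f$. First, $f \equiv 0$ outside a compact set: any point $x$ outside the (compact) support of the isotopy satisfies $\phi_t(x) = x$ for all $t$, and by continuity in $t$ together with discreteness of the fibers of $p$, the lifted path $\tilde{\phi}_t(\tilde{x})$ is forced to stay at $\tilde{x}$, giving $\tilde{\phi}(\tilde{x}) = \tilde{x}$. Second, by construction $f(x)$ reduces modulo $1$ to $\pi(\phi(x)) - \pi(x) \in S^1$, so
$$\min_{k \in \Z} |f(x) - k| \; = \; d_{S^1}\bigl(\pi(x), \pi(\phi(x))\bigr) \; \leq \; d_{C^0}(\pi, \pi \circ \phi) \; < \; \tfrac{1}{2}.$$
Because this nearest-integer distance is strictly less than $\tfrac{1}{2}$ everywhere, the nearest integer $k(x)$ is uniquely defined and depends continuously on $x$; being integer-valued it is locally constant, and since $W \times S^1$ is connected and $k \equiv 0$ outside the compact support, we deduce $k \equiv 0$ globally. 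Hence $|f(x)| \leq d_{C^0}(\pi, \pi \circ \phi)$ for every $x \in W \times S^1$.

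Finally, for a translated point $z$ I identify $f(z)$ with its action by pulling the defining integral through the covering map:
$$\mathcal{A}_\alpha(z) = \int_0^1 d\theta(\dot{\phi}_t(z))\, dt = \int_0^1 d\tilde{\pi}(\dot{\tilde{\phi}}_t(\tilde{z}))\, dt = \tilde{\pi}(\tilde{\phi}(\tilde{z})) - \tilde{\pi}(\tilde{z}) = f(z),$$
and Lemma \ref{Lemma:TP_Action} ensures that this value does not depend on the auxiliary isotopy chosen at the outset. The main subtlety, and the only genuinely nontrivial step, is controlling the integer ambiguity when lifting the $S^1$-displacement $\pi(\phi(x)) - \pi(x)$ to a real number: a priori $f$ could differ from the ``short'' displacement by any integer, and the claimed bound would fail by arbitrary integer shifts. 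The argument above pins down the correct integer globally, exploiting both the connectedness of $W \times S^1$ and the fact that the lifted isotopy is the identity outside a compact set.
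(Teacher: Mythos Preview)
Your proof is correct and is essentially the same as the paper's: your vertical displacement $f$ coincides with the paper's function $F(x)=\int_0^1 d\theta(\dot{\phi}_t(x))\,dt$, as your final displayed computation shows, and both arguments use connectedness together with $F\equiv 0$ outside a compact set to rule out the integer ambiguity. The only cosmetic difference is packaging: you phrase the connectedness step via a locally constant nearest-integer function, whereas the paper applies the intermediate value theorem to produce a point $y$ with $|F(y)|=\tfrac12$ and derive a contradiction.
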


\begin{proof}
Let $\{ \phi_t \}_{t\in [0,1]}$ be a path in $\Contoc(W \times S^1)$ such that $\phi_0=id, \phi_1=\phi.$ Define $F:W\times S^1 \to \R$ by $F(x)=\int_0^1 d\theta(\dot{\phi}_t(x)) dt.$ By definition, for every translated point $z$ of $\phi$,  $F(z)=\mathcal{A}_\alpha(z).$

Firstly, we show that all translated points have actions in the interval $(-1/2, 1/2).$ To this end, assume that there exists a translated point $z$ whose action is not in this window.  We have that $|F(z)|\geq 1/2.$ On the other hand, for any $x$ outside of the joint support of all $\phi_t$, we have that $\phi_t(x)=x$ for all $t$, and thus $F(x)=0.$ Since $W\times S^1$ is connected, there exists $y\in W\times S^1$ such that $|F(y)|=1/2.$ By the definition of $F$, $\pi(y)$ and $\pi(\phi(y))$ are antipodal points on $S^1$ which implies that $d_{C^0}(\pi, \pi \circ \phi) \geq d_{g_{std}}(\pi(y), \pi(\phi(y)))=1/2.$ This contradicts the assumption of the theorem.

We have thus proven that for every translated point $z$, $\mathcal{A}_\alpha(z)= F(z)\in (-1/2,1/2).$ However, since the length of $S^1$ is equal to 1, we have that 
$$\mathcal{A}_\alpha(z)= F(z)=d_{g_{std}}(\pi(z), \pi(\phi(z)))\leq d_{C^0}(\pi, \pi \circ \phi),$$
which finishes the proof. 
\end{proof}

The expression $d_{C^0} (\pi,  \pi \circ \phi)$ can be considered to be the $C^0$-norm of $\phi$ with respect to the product pseudometric $0 \oplus g_{std}$, where $0$ denotes the zero pseudometric on $W.$ Since the zero pseudometric is the smallest possible, the same conclusion holds for any other metric on $W$ and we obtain the following corollary.

\begin{coro}\label{Coro:C0-continuity}
Let $W\times S^1$ be equipped with a product metric $g \oplus g_{std}$, where $g_{std}$ is induced by the standard flat metric on $\R$ and $g$ is an arbitrary Riemannian metric on $W.$ For all $\phi \in \Contoc(W\times S^1)$ such that $\| \phi \|_{C^0}< \frac{1}{2}$, it holds
$$Spec_\alpha(\phi)\subset\left[-\| \phi \|_{C^0} , \| \phi \|_{C^0}\right].$$
\end{coro}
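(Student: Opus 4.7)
The plan is to reduce Corollary \ref{Coro:C0-continuity} directly to Proposition \ref{Thm:C0-continuity} by comparing the two relevant distances on $W \times S^1$. Let $\pi : W \times S^1 \to S^1$ denote the projection as in the proposition, and equip $W \times S^1$ with the product metric $d_{g \oplus g_{std}}$, so that
\[
d_{g \oplus g_{std}}\bigl((p_1,\theta_1),(p_2,\theta_2)\bigr) = \sqrt{d_g(p_1,p_2)^2 + d_{g_{std}}(\theta_1,\theta_2)^2}.
\]
The key observation is that, since $d_g \geq 0$, this product metric dominates the pullback of $d_{g_{std}}$ via $\pi$; namely, for every pair of points $x,y \in W \times S^1$,
\[
d_{g_{std}}(\pi(x), \pi(y)) \leq d_{g \oplus g_{std}}(x,y).
\]

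First I would apply this pointwise to $x$ and $\phi(x)$ for $\phi \in \Contoc(W \times S^1)$ and then take the supremum over $x \in W \times S^1$. This gives the inequality
\[
d_{C^0}(\pi, \pi \circ \phi) \leq \|\phi\|_{C^0},
\]
where the left-hand side is the $C^0$-distance computed with respect to the pseudometric $0 \oplus g_{std}$ appearing implicitly in Proposition \ref{Thm:C0-continuity}, and the right-hand side is the $C^0$-norm with respect to $g \oplus g_{std}$.

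Now, assuming $\|\phi\|_{C^0} < \tfrac{1}{2}$, the previous display shows $d_{C^0}(\pi, \pi \circ \phi) < \tfrac{1}{2}$, so the hypothesis of Proposition \ref{Thm:C0-continuity} is satisfied. That proposition then yields
\[
Spec_\alpha(\phi) \subset \bigl[-d_{C^0}(\pi, \pi \circ \phi),\, d_{C^0}(\pi, \pi \circ \phi)\bigr] \subset \bigl[-\|\phi\|_{C^0},\, \|\phi\|_{C^0}\bigr],
\]
which is exactly the claim. There is no real obstacle here — the whole content of the corollary is the monotonicity of the supremum under the pointwise inequality between the two distances, which makes the $W$-part of any product metric irrelevant to the conclusion; the work has already been done in the proof of Proposition \ref{Thm:C0-continuity}.
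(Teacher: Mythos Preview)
Your proof is correct and follows exactly the same approach as the paper: establish the inequality $d_{C^0}(\pi, \pi \circ \phi) \leq \|\phi\|_{C^0}$ from the fact that the product metric dominates the $S^1$-factor distance, and then invoke Proposition~\ref{Thm:C0-continuity}. The paper's version is simply terser, stating the inequality and the conclusion in a single sentence.
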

\begin{proof}
For any Riemannian metric $g$, $d_{C^0} (\pi,  \pi \circ \phi)\leq  \| \phi \|_{C^0} $ and the claim follows from Proposition \ref{Thm:C0-continuity}.
\end{proof}

\subsection{Proof of Theorems \ref{Gamma_Is_Locally_Bounded} and \ref{Thm: AM_Continuity}}\label{Section:Gamma_Proof}

\begin{proof}[Proof of Theorem \ref{Gamma_Is_Locally_Bounded}]
As a direct consequence of Corollary \ref{Coro:C0-continuity} and Proposition \ref{prop:C_non_integer} we obtain that for $\phi \in \Contoc(\R^{2n}\times S^1)$ such that $\| \phi \|_{C^0}< \frac{1}{2}$ Sandon's spectral invariants satisfy
\begin{equation}\label{eq: Spectral_Continuity}
-\| \phi \|_{C^0} \leq c_{-}(\phi) \leq 0 \leq c_{+}(\phi) \leq \| \phi \|_{C^0}.
\end{equation}

Thus, for such $\phi$, $\gamma(\phi)\leq 2.$ Applying this inequality to $\phi \psi^{-1}$ yields $|\gamma(\phi)-\gamma(\psi)| \leq \gamma(\phi \psi^{-1})\leq 2$ as needed.
\end{proof}

\begin{rem}\label{Remark:c+_continuity}
(\ref{eq: Spectral_Continuity}) also implies that if $\| \phi \|_{C^0} < \frac{1}{2}$ then $\lceil c^+(\phi) \rceil \leq 1$, which further implies that if $d_{C^0}(\phi,\psi)<\frac{1}{2}$ then
$$| \lceil c^+(\phi) \rceil - \lceil c^+(\psi) \rceil | \leq \max (\lceil c^+(\phi\psi^{-1}) \rceil, \lceil c^+(\psi \phi^{-1}) \rceil) \leq 1.$$
\end{rem}

\begin{proof}[Proof of Theorem  \ref{Thm: AM_Continuity}]
The theorem is a direct consequence of Lemma \ref{Lemma:Actions_vs_Actions} and Corollary \ref{Coro:C0-continuity}.
\end{proof}


\section{$C^0$-properties of the spectral metric}

\subsection{Proof of Proposition \ref{Lower_Semi_Gamma}}

We will prove that 
$$\lceil c^+(\cdot) \rceil,-\lfloor c^-(\cdot) \rfloor: (\Contoc(\R^{2n}\times S^1),\tau_{C^0}) \to \Z$$
 are lower semicontinuous. From here Proposition \ref{Lower_Semi_Gamma} immediately follows since $\gamma(\cdot) = \lceil c^+(\cdot) \rceil-\lfloor c^-(\cdot) \rfloor.$
 
We start by proving lower semicontinuity of $\lceil c^+(\cdot) \rceil.$ Let $\phi \in \Contoc(\R^{2n}\times S^1)$ and denote by $\varepsilon=\lceil c^+(\phi) \rceil - c^+(\phi)$, $\varepsilon \in [0,1).$ Assume that $\psi \in \Contoc(\R^{2n}\times S^1)$ is such that $d_{C^0}(\phi, \psi) = \| \phi \psi^{-1} \|_{C^0} < \min (1-\varepsilon, 1/2).$ Using (\ref{eq: Spectral_Continuity}), we estimate
$$\lceil c^+(\phi) \rceil - 1 = \lceil c^+(\phi) \rceil - \varepsilon - (1-\varepsilon)< c^+(\phi) -  \| \phi \psi^{-1} \|_{C^0}  \leq c^+(\phi) -  c^+(\phi \psi^{-1} ) \leq \lceil c^+(\phi) \rceil,$$
which implies that $\lceil c^+(\phi) \rceil = \lceil c^+(\phi) -  c^+(\phi \psi^{-1} )  \rceil .$ Using 3) from Proposition \ref{prop: C_integer} we get that
\begin{equation}\label{LSC_C+}
\lceil c^+(\phi) \rceil = \lceil c^+(\phi) -  c^+(\phi \psi^{-1} )  \rceil \leq \lceil c^+(\psi) \rceil, 
\end{equation}
which proves the claim.

To prove lower semicontinuity of $-\lfloor c^-(\cdot) \rfloor$ we fix again $\phi \in \Contoc(\R^{2n}\times S^1)$ and use (\ref{LSC_C+}) for $\phi^{-1}$ insted of $\phi.$ This implies that there exists a $C^0$-neighbourhood $U$ of $\phi^{-1}$ such that for all $\psi \in U$ it holds
$$\lceil c^+(\phi^{-1}) \rceil \leq \lceil c^+(\psi) \rceil.$$
By 2) from Proposition \ref{prop: C_integer}, we have that for all $\psi \in U$
\begin{equation}\label{LSC_C+2}
-\lfloor c^-(\phi) \rfloor \leq -\lfloor c^-(\psi^{-1}) \rfloor.
\end{equation}
 Equivalently, (\ref{LSC_C+2}) holds whenever $\psi ^{-1} \in U^{-1} = \{ \varphi^{-1} ~|~ \varphi \in U \}.$ By Lemma \ref{Lemma:Top_Group}, taking inverses is a continuous map an hence $U^{-1}$ is an open neighbourhood of $\phi,$ which finishes the proof. \qed

\subsection{Proof of Theorem \ref{Gamma_Homeo}}


\begin{proof}
First, we show that $\widetilde{\gamma}$ coincides with $\gamma$ on  $\Contoc(\R^{2n}\times S^1).$ For $\phi \in \Contoc(\R^{2n}\times S^1)$, $\widetilde{\gamma}(\phi)\leq \gamma(\phi)$ by definition of $\widetilde{\gamma}$ since $\phi \in U'$ for every open neighbourhood $U.$ On the other hand, by Proposition \ref{Lower_Semi_Gamma}, there exists an open set $U \ni \phi $ in $ \ContocH(\R^{2n}\times S^1)$ such that $\gamma (\phi)\leq \gamma( \psi)$ for all $\psi \in U'.$ Thus, $\gamma(\phi) \leq \min_{\psi \in U'} \gamma(\psi) \leq \widetilde{\gamma}(\phi).$

To show non-degeneracy assume $\phi \in \ContocH(\R^{2n}\times S^1)$, $\phi \neq id.$ Then there exists an open neighbourhood $U$ of $\phi$ such that $id \notin U$ and thus $\min_{\psi \in U'} \gamma(\psi)\geq 1.$ This implies $\gamma(\phi)\geq 1.$

Next, we show that $\widetilde{\gamma}$ is symmetric.  Since taking inverses is a homeomorphism of $(\ContocH(\R^{2n}\times S^1),\tau_{C^0})$, see Lemma \ref{Lemma:Top_Group}, $U$ is an open neighbourhood of $\phi$ if and only if $U^{-1}=\{ \psi^{-1} ~|~ \psi \in U \}$ is an open neighbourhood of $\phi^{-1}.$ Using symmetry of $\gamma$ we have that
$$\widetilde{\gamma}(\phi) = \max_U \min_{\psi \in U'} \gamma(\psi) = \max_{U^{-1}} \min_{\psi^{-1} \in (U')^{-1}}  \gamma(\psi^{-1}) = \max_{U^{-1}} \min_{\psi^{-1} \in (U^{-1})'}  \gamma(\psi^{-1})=\widetilde{\gamma}(\phi^{-1}).  $$

Finally, we prove triangle inequality. Let $\phi, \psi \in \ContocH(\R^{2n}\times S^1).$ Since $\widetilde{\gamma}$ is $\Z$ valued, by definition there exists an open neighbourhood $U$ of $\phi \psi$ such that $\widetilde{\gamma}(\phi \psi)=\min_{\varphi \in U'} \gamma(\varphi).$ Similarly, there exist sequences $\phi_k,\psi_k \in \Contoc(\R^{2n}\times S^1)$, $k\geq 1$ such that $\phi_k \to \phi, \psi_k \to \psi$ in $\tau_{C^0}$ and $\gamma(\phi_k)=\widetilde{\gamma}(\phi), \gamma(\psi_k)=\widetilde{\gamma}(\psi)$ for all $k.$ Again using Lemma \ref{Lemma:Top_Group}, we have that composition is continuous, and thus $\phi_k \psi_k \in U'$ for sufficiently large $k.$ Using triangle inequality for $\gamma$ we get that
$$\widetilde{\gamma}(\phi\psi)=\min_{\varphi \in U'} \gamma (\varphi) \leq \gamma (\phi_k \psi_k)\leq \gamma (\phi_k)+ \gamma (\psi_k) = \widetilde{\gamma}(\phi) + \widetilde{\gamma}(\psi).$$

Let us now show that $\widetilde{\gamma}$ is locally bounded. Let $U$ be an open neighbourhood of $id$ in $\ContocH(\R^{2n}\times S^1)$ such that $\gamma <C$ on $U'.$ Such $U$ exists since $\gamma$ is locally bounded by Theorem \ref{Gamma_Is_Locally_Bounded} (in fact we can take $U$ to be the open ball of radius $\frac{1}{2}$ in $d_{C^0}$ and $C=2$). Let $\phi \in U$ and let $V$ be a sufficiently small open neighbourhood of $\phi$ so that $V\subset U$ and $\widetilde{\gamma}(\phi)=\min_{\psi \in V'} \gamma(\psi).$ Now $\widetilde{\gamma}(\phi)=\min_{\psi \in V'} \gamma(\psi)\leq \min_{\psi \in U'} \gamma(\psi) <C.$

Lastly, we show that $\widetilde{\gamma}$ is lower semicontinuous. Assume the contrary, that there exists $\phi \in \ContocH(\R^{2n}\times S^1)$ such that for every open neighbourhood $U \ni \phi$ there exists $\psi \in U$, $\widetilde{\gamma}(\psi)< \widetilde{\gamma}(\phi).$ Be the definition of $\widetilde{\gamma}$ there exists an open neighbourhood $V\subset U$ of $\psi$ such that $\min_{\varphi \in V'} \gamma(\varphi)=\widetilde{\gamma}(\psi).$ Thus, in every neighbourhood $U$ of $\phi$ there exists $\varphi \in U'$ such that $\gamma(\varphi)<\widetilde{\gamma}(\phi)$, which is a contradiction. 
\end{proof}


\section{Proof of Theorem \ref{Thm:Conjugation_Comparison}}

In order to relate $\| \cdot \|_{conj}$ to $\| \cdot \|_{frac}$ we will need the following auxiliary statement.

\begin{lemma}\label{lem: closure of the conjugacy class in the contact group} Let $\phi \in \Contoc (Y,\xi)$ be supported inside a Darboux ball. There exist $\psi_i\in  \Contoc (Y,\xi), i \geq 1$ such that $\psi_i \phi \psi_i^{-1}$ converges to $id$ in the strong topology.
\end{lemma}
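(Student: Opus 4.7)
The plan is to construct the sequence $\psi_i$ explicitly as contact squeezing maps supplied by Lemma \ref{Lemma:Ball_Sqeezing}, verify $C^0$-convergence of the conjugates to $\mathrm{id}$ by a straightforward support-diameter estimate, and finally upgrade this to strong convergence by invoking Lemma \ref{Lemma:Strong_Convergence}.

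To begin, I would pass to a Darboux chart. By hypothesis, $\supp(\phi)$ lies in a Darboux ball, which is the image of some $B(R_0) \subset (\R^{2n+1}, \xi_0)$ under a contact embedding. Choose $r_0 < R_0$ large enough so that, in the chart, $\supp(\phi) \subset B(r_0)$. For each $i \geq 1$ apply Lemma \ref{Lemma:Ball_Sqeezing} with $r = r_0$, $R = R_0$, $a = 1/i$ to produce $\Psi_{1/i} \in \Contoc(B(R_0),\xi_0)$ which coincides with $(x,y,z)\mapsto (x/i, y/i, z/i^2)$ on $B(r_0)$. Via the Darboux embedding, extend $\Psi_{1/i}$ by the identity to define $\psi_i \in \Contoc(Y,\xi)$.

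The key point is then to control $\supp(\psi_i \phi \psi_i^{-1}) = \psi_i(\supp(\phi))$. Since $\supp(\phi) \subset B(r_0)$ and $\psi_i$ acts there as the rescaling above, one sees in the chart that
\[
\supp(\psi_i \phi \psi_i^{-1}) \subset \Psi_{1/i}(B(r_0)) \subset B(r_0/i).
\]
In particular, $\psi_i \phi \psi_i^{-1}$ fixes everything outside this small ball, while on it moves points by at most the diameter $2 r_0 /i$. Hence $\|\psi_i \phi \psi_i^{-1}\|_{C^0} \leq 2 r_0 / i \to 0$, giving convergence to $\mathrm{id}$ in $\tau_{C^0}$.

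To finish, observe that every $\psi_i \phi \psi_i^{-1}$ is supported in the common compact set $K$ given by the image of $\overline{B(R_0)}$ under the Darboux embedding (which is compact because $R_0$ was chosen strictly less than the Darboux radius). Since all these maps coincide with $\mathrm{id}$ outside $K$, Lemma \ref{Lemma:Strong_Convergence} upgrades $\tau_{C^0}$-convergence to $\tau_s$-convergence, completing the argument. The only place requiring care is precisely this last step, since a priori one only gets $C^0$-smallness; the shrinking-support construction is what makes the fixed-compact-support hypothesis of Lemma \ref{Lemma:Strong_Convergence} available.
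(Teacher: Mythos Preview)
Your argument is correct and follows essentially the same route as the paper: use Lemma~\ref{Lemma:Ball_Sqeezing} to produce squeezing contactomorphisms $\psi_i$, deduce $C^0$-convergence of $\psi_i\phi\psi_i^{-1}$ to $id$ from the shrinking supports, and then invoke Lemma~\ref{Lemma:Strong_Convergence} via the common compact support to upgrade to strong convergence. The only cosmetic slip is your choice of $K$ as the image of $\overline{B(R_0)}$, which need not lie in the chart; since in fact $\supp(\psi_i\phi\psi_i^{-1})\subset B(r_0/i)\subset B(r_0)$, you can simply take $K$ to be the image of $\overline{B(r_0)}$ (the paper handles this by taking $\psi_i$ supported in $B\!\left(\tfrac{R+r}{2}\right)$).
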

\begin{proof}
Working in local Darboux coordinates,  let $B(R)\subset \R^{2n+1}$ be the Darboux ball inside of which $\phi$ is supported. Due to compactness of $\supp (\phi)$, there exists $0<r<R$ such that $\supp (\phi) \subset B(r).$ Let $\psi_i=\Psi_{1/i}$, where $\Psi_{1/i}$ is the map given by Lemma \ref{Lemma:Ball_Sqeezing}, shrinking $B(r)$ and supported inside $B(\frac{R+r}{2}).$ Since $\supp(\psi_i \phi \psi_i^{-1})=\psi_i (\supp(\phi))$, we have that $\psi_i \phi \psi_i^{-1}\to id$ in $\tau_{C^0}$ defined by the standard flat metric on $\R^{2n+1}.$ Since all the maps are supported in $B(\frac{R+r}{2})$, Lemma \ref{Lemma:Strong_Convergence} implies that $\psi_i \phi \psi_i^{-1}\to id$ in the strong topology as well.
\end{proof}

\begin{rem}
Lemma $\ref{lem: closure of the conjugacy class in the contact group}$ implies that $\|\cdot \|_{conj}$ is trivial on $\Contoc(\R^{2n+1})$. 
\end{rem}

\begin{proof}[Proof of Theorem \ref{Thm:Conjugation_Comparison}]
Let
\begin{equation}\label{eq:Fragmentation_proof}
\phi=\phi_1 \circ \ldots \circ \phi_N,
\end{equation}
be a decomposition of $\phi$ into fragments supported in Darboux balls. By Lemma \ref{lem: closure of the conjugacy class in the contact group}, for all $1\leq i \leq N$, $id \in \conjH(\phi_i)$ independently of the choice of a Riemannian metric on $Y.$ This proves (\ref{Inequality_Conj_Frac}). 

To prove (\ref{Inequality_Sandon_Conj_Frac}), assume decomposition (\ref{eq:Fragmentation_proof}), where now for all $1\leq i \leq N$, $id \in \conjH(\phi_i).$ Theorem \ref{Gamma_Is_Locally_Bounded} implies that $\gamma(\phi_i)\leq 2$ and triangle inequality for $\gamma$ gives us
$$\gamma(\phi)=\gamma( \phi_1 \circ \ldots \circ \phi_N)\leq \gamma(\phi_1)+\ldots +\gamma(\phi_N)\leq 2N,$$
which finishes the proof.
\end{proof}


\section{Proof of Theorem \ref{thm: Rokhlin property of the contactomorphisms of B2nS1} }\label{Sec:Proof_cc}
\begin{proof}
Assume that $f,g\in \ContocH(\R^{2n}\times S^1)$ are $k$-conjugation connected. Let $\phi_i,\psi_i$ be as in the definition of conjugation connectedness. Since for all $0\leq i \leq k-1$, $\phi_i,\phi_{i+1} \in \conjH(\psi_i)$, Theorem \ref{Gamma_Homeo} implies that there exists $C>0$ such that
$$|\widetilde{\gamma}(\psi_i) - \widetilde{\gamma}(\phi_i)| \leq C, ~|\widetilde{\gamma}(\psi_i)-\widetilde{\gamma}(\phi_{i+1})| \leq C,$$
which implies that $|\widetilde{\gamma}(\phi_{i+1})-\widetilde{\gamma}(\phi_i)|\leq 2C.$ Summing these inequalities over $i$ gives us that
$$|\widetilde{\gamma}(f)-\widetilde{\gamma}(g)|\leq 2Ck$$
and thus
$$|\widetilde{\gamma}(f)-\widetilde{\gamma}(g)|\leq 2Cd_{cc}(f,g).$$
Since $\widetilde{\gamma}$ extends $\gamma$ and $\gamma$ is unbounded, see Theorem \ref{thm:Gamma_Basic}, so is  $d_{cc}$.
\end{proof}

\begin{rem}\label{Remark:cc_c+}
In the above argument we can take $C=2$ and obtain 
$$|\widetilde{\gamma}(f)-\widetilde{\gamma}(g)|\leq 4d_{cc}(f,g).$$
The same argument, based on Remark \ref{Remark:c+_continuity}, shows that
$$|\lceil c^+(f) \rceil - \lceil c^+(g) \rceil |\leq 2 d_{cc}(f,g).$$
In fact, one can improve the constants 4 and 2 in the above inequalities to 2 and 1 respectively, by using lower semicontinuity of $\widetilde{\gamma}$ and $\lceil c^+(\cdot) \rceil$.
\end{rem}

\section{Auxiliary examples}\label{Sec:Examples}

In this section, we collect a number of examples mentioned throughout the text. Each one of them illustrates a certain point related to the results of the paper. The first one is complementary to Theorem \ref{Gamma_Is_Locally_Bounded} and it shows that $\gamma$ is not locally bounded in the compact open topology.
\begin{ex}\label{Example:CO_vs_C0}
Let $T_k :\R^{2n}\times S^1 \to \R^{2n} \times S^1, k\geq 1$ be a translation with respect to the first coordinate
$$T_k(x_1,y_1,\ldots , x_n,y_n,z)=(x_1+k,y_1, \ldots, x_n,y_n,z),$$
$f_0\in \Contoc(\R^{2n}\times S^1), f_0 \neq id$ arbitrary and define $f_k=T_k \circ f_0 \circ T_k^{-1}.$ Now, for all $k\geq 1$, $f_k \in \Contoc(\R^{2n}\times S^1)$ since $T_k^* \alpha_0 = \alpha_0$ and $\gamma(f_k)=\gamma (f_0)>0$ by definition. However, $\supp (f_k)=T_k(\supp(f_0))$ and thus $f_k \to id$ in $\tau_{CO}$ by Lemma \ref{Lemma:CO_Convergence}. This shows that $\gamma$ is not locally bounded with respect to $\tau_{CO}.$
\end{ex}

The next example shows that $c^-(\phi),c^+(\phi)$ may be arbitrarily close to zero, while $id \notin \conjH(\phi).$ In particular, we may have that $\gamma(\phi)\leq 2$, while $\|\phi \|_{conj}>1$, which implies that the first inequality in (\ref{Inequality_Sandon_Conj_Frac}) is not an equality.

\begin{ex}\label{Example: Not_Equal} 
Let $H:\R^{2n}\to \R$ be a compactly supported autonomous Hamiltonian and $\phi$ the Hamiltonian diffeomorphism generated by $H$. Taking $H$ to be sufficiently $C^2$-small, we have that Lemma \ref{Lemma:Lifts_Computations} applies and thus $c^-(\widehat{\phi})=\min H,c^+(\widehat{\phi})=\max H.$ It is clear that $\widehat{\phi}$ defined in this way may have $c^-(\phi),c^+(\phi)$ arbitrarily close to zero and we wish to arrange that $id \notin \conjH(\widehat{\phi})$ as well.  To this end, let $q^+\in \R^{2n}$ be such that $H(q^+)=\max H.$ By (\ref{Equation:Lift}), we have that
\begin{equation}\label{Equation_Rational_Rotation}
\widehat{\phi}(q^+,z)=(q^+, z+\max H \mod 1).
\end{equation}
Let $H$ be such that $H(q^+)=\frac{1}{m}$, where $m\geq 5$ is an integer. We claim that for every $\psi \in \Contoc(\R^{2n}\times S^1)$ it holds $\| \psi \widehat{\phi} \psi^{-1} \|_{C^0}\geq \frac{1}{m}$ and thus $id \notin \conjH(\widehat{\phi}).$ 

To prove this claim,  we assume the contrary, that $\| \psi \widehat{\phi} \psi^{-1} \|_{C^0}< \frac{1}{m}.$ Let $q_1=(q^+,0), q_2=(q^+,\frac{1}{m}), \ldots , q_m=(q^+,\frac{m-1}{m})$ be $m$ equally spaced points on $\{ q^+ \} \times S^1.$ By (\ref{Equation_Rational_Rotation}) we have that $\widehat{\phi}(q_i)=q_{i+1}, i=1,\ldots , m$, where $q_{m+1}=q_1.$ Denoting $S= \psi(\{ q^+ \} \times S^1)$ and $Q_i=\psi (q_i)$, we have that $Q_i$ are $m$ points on $S$ such that $\psi \widehat{\phi} \psi^{-1}(Q_i)=Q_{i+1}.$ By our assumption for $i=1,\ldots , m$, $dist(Q_i , Q_{i+1})<\frac{1}{m}<\frac{1}{2}$ and thus there exists a unique length-minimizing path $p_i$ connecting $Q_i$ to $Q_{i+1}.$  Denote by $p_i^{-1}$ the reversed path and by $p_m^{-1}  \ldots  p_1^{-1}$ the loop obtained by concatenation. It has length less than 1 and is thus contractible. Let $l_i:[\frac{i-1}{m},\frac{i}{m}]\to S, l_i(t)=\psi ( q^+ ,t)$ be the arcs $[Q_i Q_{i+1}]$ on $S.$ The loops $l_i  p_i^{-1}$ for different $i$ are all homotopic to each other. Indeed, since $\| \psi \widehat{\phi} \psi^{-1} \|_{C^0}< \frac{1}{m}$ we have that $dist(l_i(t),l_{i+1}(t))<\frac{1}{m}.$ On the other hand $dist(p_i^{-1}(t),p_{i+1}^{-1}(t))<\frac{2}{m}$ since the length of each $p_i$ is $<\frac{1}{m}.$ We conclude that for each $t$
$$dist( (l_i p_i^{-1})(t), (l_{i+1}  p_{i+1}^{-1})(t))<\frac{2}{m}<\frac{1}{2}.$$
Thus, the two loops are homotopic by a homotopy following a unique length minimizing path between $(l_i  p_i^{-1})(t)$ and $(l_{i+1}  p_{i+1}^{-1})(t)$ for each $t.$ Lastly, we decompose the homotopy class of $S$ as follows:
$$[S]=[S p_m^{-1} \ldots  p_1^{-1} ] = [l_1  p_1^{-1} ] [p_1  l_2  p_2^{-1}  p_1^{-1} ] \ldots [ p_1  p_2  \ldots  p_{m-1}  l_m  p_m^{-1}  \ldots  p_1^{-1}] = m [l_1  p_1^{-1}].$$
This is a contradiction since $\pi_1(\R^{2n}\times S^1)\cong \Z$ and $[S]$ is the generator.
\end{ex}

\begin{rem}\label{Remark:Simpler_Example}
One could show that $\frac{1}{2}\gamma \neq  \| \cdot \|_{conj}$ simply by lifting $\phi$ generated by an autonomous $C^2$-small non-negative Hamiltonian. Indeed, in this case $ c^-(\widehat{\phi})=0$ and $\gamma(\widehat{\phi}) = \lceil c^+(\widehat{\phi}) \rceil =1 $ by Lemma \ref{Lemma:Lifts_Computations}, while $\| \widehat{\phi} \|_{conj} \geq 1$ by definition.
\end{rem}

Lastly, we wish to provide an example showing that $d_{cc}$ is neither left nor right-invariant for $(\ContocH(\R^{2n}\times S^1),\tau_{C^0}).$ We will need the following lemma:

\begin{lemma}\label{lem: unboundedness of c}
For any real $a >0$, there exists an autonomous,  non-negative, compactly supported Hamiltonian $H: \R^{2n} \to \R$ such that for all $t \in [0,1]$,
\[c^+_{Ham}(\phi^H_t)=at.\]
\end{lemma}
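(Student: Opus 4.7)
The plan is to realize $H$ as a spatial rescaling of a fixed bump function, so that Lemma~\ref{Lemma:Lifts_Computations} applies not only to $H$ but uniformly to every scalar multiple $tH$ with $t \in [0,1]$. The driving observation is that $H$ being autonomous gives $\phi^H_t = \phi^{tH}_1$, which reduces the problem to computing $c^+_{Ham}$ on the time-one map of the non-negative autonomous Hamiltonian $tH$, where Lemma~\ref{Lemma:Lifts_Computations} directly applies once the $C^2$-smallness hypothesis is met.

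Concretely, I would fix a non-negative, smooth, compactly supported bump $\rho \colon \R^{2n} \to \R$ with $\max \rho = 1$, and set $\rho_R(x) = \rho(x/R)$. Spatial rescaling preserves the maximum, while the $C^2$-norm of $\rho_R$ scales as $R^{-2}$. Thus, given $a > 0$, choosing $R$ large enough makes $H := a\rho_R$ non-negative, compactly supported, of maximum $a$, and arbitrarily $C^2$-small. Since $\|tH\|_{C^2} \leq \|H\|_{C^2}$ for $t \in [0,1]$, a single such choice of $R$ places every $tH$ within the regime in which Lemma~\ref{Lemma:Lifts_Computations} can be applied.

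With this setup, applying the lemma to $tH$ and invoking the identity $c^{\pm}(\widehat{\phi}) = c^{\pm}_{Ham}(\phi)$ from (\ref{eq:Spec_Lifts}) yields $c^+_{Ham}(\phi^{tH}_1) = \max(tH) = at$, which, together with $\phi^H_t = \phi^{tH}_1$, gives the desired equality. The only point requiring any care is the uniformity in $t$ of the ``sufficiently $C^2$-small'' hypothesis, but this is not a genuine obstacle because the relevant $C^2$-norm is monotone under scaling by $t \in [0,1]$, so controlling $H$ automatically controls the entire family $\{tH\}_{t \in [0,1]}$.
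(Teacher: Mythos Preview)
There is a genuine gap. Your Hamiltonian $H = a\rho_R$ has $\|H\|_{C^0} = \max H = a$, and this quantity is fixed by the statement you are trying to prove; spatial rescaling shrinks $\|DH\|_{C^0}$ and $\|D^2H\|_{C^0}$ but leaves the sup norm untouched. Hence the claim that ``the $C^2$-norm of $\rho_R$ scales as $R^{-2}$'' is false for the standard $C^2$-norm, and $H$ is never $C^2$-small in the sense required to invoke Lemma~\ref{Lemma:Lifts_Computations} as stated. The lemma's hypothesis is used twice in its proof: first to produce a generating function without ghost variables (which indeed only needs $\phi^H_1$ to be $C^1$-close to the identity, i.e.\ small $\|DH\|,\|D^2H\|$), but second to guarantee that every fixed point of $\phi^H_1$ is a critical point of $H$. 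This second point is a dynamical statement about the absence of non-constant $1$-periodic orbits of $X_H$, and it does not follow from blanket ``$C^2$-smallness'' once the sup norm is allowed to be large. For an arbitrary bump $\rho$ you would still have to rule out short periodic orbits of $X_{a\rho_R}$ by hand.

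The paper's proof sidesteps Lemma~\ref{Lemma:Lifts_Computations} altogether: it takes $H$ radial with a slope condition $-2\pi r < F'(r) < 0$ that forces trajectories to be slow circles, so no non-constant orbit closes up in time $\le 1$ and $Spec(\phi^H_t)=\{0,at\}$ directly. Then spectrality and positivity of $c^+_{Ham}$ from Theorem~\ref{Thm_Viterbo_Basic} pin down $c^+_{Ham}(\phi^H_t)=at$ with no appeal to generating functions. Your idea can be salvaged---the rescaling does kill short periodic orbits for suitable $\rho$---but as written you are quoting a lemma whose hypothesis your $H$ does not satisfy, and filling the gap amounts to redoing the spectrum computation that the paper carries out explicitly.
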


\begin{proof}
In order to show the lemma, it suffices to construct $H$ such that $Spec(\phi^H_t)=\{0, at\}.$ Indeed, by Theorem \ref{Thm_Viterbo_Basic}, $c^-_{Ham}(\phi^H_t),c^+_{Ham}(\phi^H_t)\in Spec(\phi^H_t)$ and $c^-_{Ham}(\phi^H_t)\leq 0 \leq c^+_{Ham}(\phi^H_t).$ Since $\phi^H_t \neq id$ , $c^+(\phi^H_t)>0$. Let $F \colon \R_{\geq 0} \to \R_{\geq 0}$ be a compactly supported function such that $F(0)=a$ and for all $r> 0$
\[-2 \pi r < F'(r)< 0.\]

We now define $H(x)=F(\vert x \vert)$. For $x\neq 0$, we have that $\nabla H(x)=F'(|x|)\frac{x}{|x|}$ and thus the trajectory $\phi^H_t(x)$ follows a circle of radius $\vert x \vert$ centered at the origin at speed $F'(\vert x \vert)< 2 \pi \vert x \vert$. Hence, the only periodic orbits of $\phi^H_t$ are the critical points of $H$ and $Spec(\phi^H_t)=\{0, at \}.$
\end{proof}

\begin{ex}\label{Example_DCC}
Let us fix $a>0$ and let $H$ be a Hamiltonian given by Lemma \ref{lem: unboundedness of c}. Assume that $\psi \in \Ham_c(\R^{2n})$ displaces the support of $H.$ First, we claim that for all $0\leq t < T \leq 1$ it holds
\begin{equation}\label{eq:Displacement_Invariant}
c^+_{Ham}(\psi \phi^H_T \psi^{-1} \phi^H_t )=aT.
\end{equation} 
Indeed, consider the isotopy $\varphi_s = \psi \phi^H_T \psi^{-1} \phi^H_s$ , $s \in [0,t].$ Since $\psi \phi^H_T \psi^{-1}$ and $\phi^H_s$ have disjoint supports we have that
$$Spec(\psi \phi^H_T \psi^{-1} \phi^H_s)= Spec(\psi \phi^H_T \psi^{-1}) \cup Spec (\phi^H_s)= \{ 0,as,aT \}.$$
Since $\varphi_0 \neq id$ and $c^+_{Ham}(\varphi_0) \in \{0, aT \} $, we conclude that $c^+_{Ham}(\varphi_0)= aT.$ On the other hand, by continuity of $c^+_{Ham}$,  the map $s \to c^+_{Ham}(\varphi _s)\in \{ 0,as,aT \}$ is continuous and thus constant since $as<aT$ for all $s$. Taking $s=t$ proves the claim.

Now, applying (\ref{eq:Displacement_Invariant}) with $t=\frac{1}{3}, T=\frac{2}{3}$ we have that
$$ c^+_{Ham}(\psi \phi^H_{\frac{2}{3}} \psi^{-1} \phi^H_{\frac{1}{3}})=\frac{2}{3}a ,~ c^+_{Ham}(\phi^H_{\frac{2}{3}}  \phi^H_{\frac{1}{3}})= c^+_{Ham}(\phi^H_1)=a.$$
Let $f,g,h\in \Contoc(\R^{2n}\times S^1)$ be the lifts $f=\widehat{ \phi}^H_{\frac{2}{3}}$, $g=\widehat{ \phi}^H_{\frac{1}{3}}$ and $h=\widehat{\psi}.$ By definition, $d_{cc}(f,hfh^{-1})=1.$ On the other hand, using  (\ref{eq:Spec_Lifts}) and Remark \ref{Remark:cc_c+} we have that 
$$\frac{a}{3}-1 \leq \lceil c^+_{Ham} (\phi^H_{\frac{2}{3}}  \phi^H_{\frac{1}{3}}) \rceil - \lceil c^+_{Ham} (\psi \phi^H_{\frac{2}{3}} \psi^{-1} \phi^H_{\frac{1}{3}}) \rceil  =$$
$$= \lceil c^+(fg) \rceil - \lceil c^+(hfh^{-1}g) \rceil  \leq 2d_{cc}(fg,hfh^{-1}g).$$
Taking sufficiently large $a$ shows that $d_{cc}(f,hfh^{-1})\neq d_{cc}(fg,hfh^{-1}g)$, i.e. $d_{cc}$ is not right-invariant.  Since taking inverses preserves $d_{cc}$, $d_{cc}$ is also not left-invariant.
\end{ex}


\begin{thebibliography}{00}


\bibitem{AM18} P. Albers and W. J. Merry, \emph{Orderability, contact non-squeezing, and Rabinowitz Floer homology}. J. Symplectic Geom. 16 (2018), no. 6, 1481-1547.

\bibitem{ASZ25} P. Albers, E. Shelukhin and F. Zapolsky, \emph{Spectral invariants for contactomorphisms of prequantization bundles and applications}. In preparation, video available at \url{https://www.youtube.com/watch?v=DWel-3BOQrI}.

\bibitem{Bany97} A. Banyaga, \emph{The structure of classical diffeomorphism groups}. Mathematics and its Applications, 400, Kluwer Academic Publishers Group, Dordrecht, 1997.

\bibitem{BK22} G. Benedetti and J. Kang, \emph{Relative Hofer-Zehnder capacity and positive symplectic homology}. J. Fixed Point Theory Appl. 24 (2022), no.2, Paper No. 44, 32 pp.

\bibitem{BHS21} L. Buhovsky, V. Humili\`ere and S. Seyfaddini, \emph{The action spectrum and $C^0$ symplectic topology}. Math. Ann. 380 (2021), no. 1-2, 293-316.

\bibitem{BIP08} D. Burago, S. Ivanov and L. Polterovich, \emph{Conjugation-invariant norms on groups of geometric origin}. Groups of diffeomorphisms, 221-250. Adv. Stud. Pure Math. 52 Mathematical Society of Japan, Tokyo, 2008.

\bibitem{Cant24} D. Cant, \emph{Remarks on eternal classes in symplectic cohomology}. arXiv:2410.03914.

\bibitem{Chiu17} S.-F. Chiu, \emph{Nonsqueezing property of contact balls}. Duke Math. J. 166 (2017), no. 4, 605-655.

\bibitem{CS15} V. Colin and S. Sandon, \emph{The discriminant and oscillation lengths for contact and Legendrian isotopies}. J. Eur. Math. Soc.  17 (2015), no.7, 1657-1685.

\bibitem{DCGHS24} D. Cristofaro-Gardiner, V. Humili\`ere and S. Seyfaddini, \emph{Proof of the simplicity conjecture}. Ann. of Math. (2) 199 (2024), no. 1, 181-257.

\bibitem{DRS22} G. Dimitroglou Rizell and M. G. Sullivan, \emph{$C^0$-limits of Legendrians and positive loops}. arXiv:2212.09190.

\bibitem{DRS24} G. Dimitroglou Rizell and M. G. Sullivan, \emph{The persistence of a relative Rabinowitz-Floer complex}. Geom. Topol. 28 (2024), no. 5, 2145-2206.

\bibitem{DRS24B} G. Dimitroglou Rizell and M. G. Sullivan, \emph{$C^0$-limits of Legendrian knots}. Trans. Amer. Math. Soc. Ser. B 11 (2024), 798-825.

\bibitem{DUZ23} D. Djordjevi\'c, I. Uljarevi\'c and J. Zhang, \emph{Quantitative characterization in contact Hamiltonian dynamics -- I}. arXiv:2309.00527.

\bibitem{Eli87} Y. Eliashberg, \emph{A theorem on the structure of wave fronts and its application in symplectic topology}. Funktsional. Anal. i Prilozhen.21(1987), no.3, 65-72, 96.

\bibitem{EKP06} Y. Eliashberg, S. S. Kim and L. Polterovich, \emph{Geometry of contact transformations and domains: orderability versus squeezing}. Geom. Topol.10 (2006), 1635-1747.

\bibitem{EPP12} M. Entov, L. Polterovich and P. Py, \emph{On continuity of quasimorphisms for symplectic maps}. Perspectives in analysis, geometry, and topology, 169-197.  Progr. Math. 296 Birkh\"{a}user/Springer, New York, 2012. With an appendix by Michael Khanevsky.

\bibitem{Fat80} A. Fathi, \emph{Structure of the group of homeomorphisms preserving a good measure on a compact manifold}. Ann. Sci. \'Ecole Norm. Sup. (4) 13 (1980), no. 1, 45-93.

\bibitem{Fraser16} M. Fraser, \emph{Contact non-squeezing at large scale in $\mathbb{R}^{2n}\times S^1$}. Internat. J. Math. 27 (2016), no. 13, 1650107, 25 pp.

\bibitem{FPR18} M. Fraser, L. Polterovich and D. Rosen, \emph{On Sandon-type metrics for contactomorphism groups}. Ann. Math. Qu\'{e}. 42 (2018), no.2, 191-214.

\bibitem{FSZ23} M. Fraser, S. Sandon and B. Zhang, \emph{Contact non-squeezing at large scale via generating functions}. arXiv:2310.1199.

\bibitem{FS07} U. Frauenfelder and F. Schlenk, \emph{Hamiltonian dynamics on convex symplectic manifolds}. Israel J. Math. 159 (2007), 1-56.

\bibitem{GG04} J.-M. Gambaudo and E. Ghys, \emph{Commutators and diffeomorphisms of surfaces}. Ergodic Theory Dynam. Systems 24 (2004), no. 5, 1591-1617.

\bibitem{GW01} E. Glasner and B. Weiss, \emph{The topological Rohlin property and topological entropy}. Amer. J. Math. 123 (2001), no. 6, 1055-1070.

\bibitem{GW08} E. Glasner and B. Weiss, \emph{Topological groups with Rokhlin properties}. Colloq. Math. 110 (2008), no. 1, 51-80.

\bibitem{Gro86} M. Gromov, \emph{Partial differential relations}. Ergeb. Math. Grenzgeb. (3),  Springer-Verlag, Berlin, 1986.

\bibitem{Irie14} K. Irie, \emph{Hofer-Zehnder capacity of unit disk cotangent bundles and the loop product}. J. Eur. Math. Soc. (JEMS) 16 (2014), no.11, 2477-2497.

\bibitem{Kaw19} Y. Kawamoto, \emph{On $C^0$-continuity of the spectral norm on non-symplectically aspherical manifolds}. Int. Math. Res. Not. IMRN (2022), no. 21, 17187-17230.

\bibitem{LRSV21} F. Le Roux, S. Seyfaddini and C. Viterbo, \emph{Barcodes and area-preserving homeomorphisms}. Geom. Topol. 25 (2021), no. 6, 2713-2825.

\bibitem{Mailhot25} P.-A. Mailhot, \emph{The spectral diameter of a Liouville domain}. to appear in Algebraic and Geometric Topology.

\bibitem{McDS17} D. McDuff and D. Salamon, \emph{Introduction to symplectic topology}. Third edition, Oxf. Grad. Texts Math. Oxford University Press, Oxford, 2017.

\bibitem{Mul19} S. M\"uller, \emph{$C^0$-characterization of symplectic and contact embeddings and Lagrangian rigidity}. Internat. J. Math. 30 (2019), no. 9, 1950035, 48 pp.

\bibitem{MS14} S. M\"uller and P. Spaeth, \emph{Gromov's alternative, Eliashberg's shape invariant, and $C^0$-rigidity of contact diffeomorphisms}. Internat. J. Math. 25 (2014), no. 14, 1450124, 13 pp.

\bibitem{Mun00} J. R. Munkres, \emph{Topology}. Second edition, Prentice Hall Inc. Upper Saddle River, NJ, 2000.

\bibitem{Naka20} L. Nakamura, \emph{$C^0$-limits of Legendrian Submanifolds}. arXiv:2008.00924.

\bibitem{Oh05} Y. G. Oh, \emph{Construction of spectral invariants of Hamiltonian paths on closed symplectic manifolds}. The breadth of symplectic and Poisson geometry, 525-570, Progr. Math., 232, Birkh\"auser Boston, Boston, MA, 2005.

\bibitem{RZ20} D. Rosen and J. Zhang, \emph{Chekanov's dichotomy in contact topology}. Math. Res. Lett. 27 (2020), no. 4, 1165-1193.

\bibitem{San1} S. Sandon, \emph{Contact homology, capacity and non-squeezing in $\mathbb{R}^{2n}\times S^1$ via generating functions}. Ann. Inst. Fourier (Grenoble) 61 (2011), no. 1, 145-185.

\bibitem{San2} S. Sandon, \emph{An integer-valued bi-invariant metric on the group of contactomorphisms of $\mathbb{R}^{2n}\times S^1$}. J. Topol. Anal. 2 (2010), no. 3, 327-339.

\bibitem{San15} S. Sandon, \emph{Bi-invariant metrics on the contactomorphism groups}. S\~ao Paulo J. Math. Sci. 9 (2015), no. 2, 195-228.

\bibitem{Sch00} M. Schwarz, \emph{On the action spectrum for closed symplectically aspherical manifolds}. Pacific J. Math. 193 (2000), no. 2, 419-461.

\bibitem{Ser24} B. Serraille, \emph{The sharp $C^0$-fragmentation property for Hamiltonian diffeomorphisms and homeomorphisms on surfaces}. arXiv:2403.15767.

\bibitem{Sey13} S. Seyfaddini, \emph{The displaced disks problem via symplectic topology}. C. R. Math. Acad. Sci. Paris 351 (2013), no. 21-22, 841-843.

\bibitem{She18} E. Shelukhin, \emph{Viterbo conjecture for Zoll symmetric spaces}. Invent. Math. 230 (2022), no. 1, 321-373.

\bibitem{Sto22} M. Stoki\'c, \emph{New steps in $C^0$ symplectic and contact geometry of smooth submanifolds}. arXiv:2202.07996.

\bibitem{Sto24} M. Stoki\'c, \emph{$C^0$-flexibility of Legendrian discs in $\mathbb{R}^5$}. arXiv:2406.04194.

\bibitem{Ush21} M. Usher, \emph{Local rigidity, contact homeomorphisms, and conformal factors}. Math. Res. Lett. 28 (2021), no. 6, 1875-1939

\bibitem{Vit92} C. Viterbo, \emph{Symplectic topology as the geometry of generating functions}. Math. Ann. 292 (1992), no. 4, 685-710.

\bibitem{Zapol13} F. Zapolsky, \emph{Geometry of contactomorphism groups, contact rigidity, and contact dynamics in jet spaces}. Int. Math. Res. Not. IMRN (2013), no.20, 4687-4711.

\end{thebibliography}
\end{document}